\newcommand{\tch}[1]{{\color{blue}{#1}}} % changes by Tony
\newcommand{\mnote}[1]{\marginnote{\scriptsize{#1}}} % shorthand for smaller margin notes
\newcommand{\dk}{d_{\K}^k}
\newcommand{\dK}{d_{\K}}
\newcommand{\Nk}{[\N]^k}
\newcommand{\K}{\mathbb{K}}
\newcommand{\N}{\mathbb{N}}
\newcommand{\Z}{\mathbb{Z}}
\newcommand{\R}{\mathbb{R}}
\newcommand{\M}{\mathbb{M}}
\newcommand{\J}{\mathcal{J}}
\newcommand{\F}{\mathcal{F}}
\newcommand{\Q}{\mathcal{Q}}
\newcommand{\ep}{\varepsilon}
\newcommand{\ov}[1]{\overline{#1}}
\newcommand{\norm}[1]{\|#1\|}
\newcommand{\Natural}{\mathbb N}
\newcommand{\Free}{\mathcal F}
\newcommand{\set}[1]{\left\{#1\right\}}
\newcommand{\restricted}{\mathord{\upharpoonright}}
\newcommand{\indicator}[1]{{\mathbf 1}_{{#1}}}
\newcommand{\duality}[1]{\left\langle#1\right\rangle} %the duality pairing, use: $\duality{x,y^*}$
\newcommand{\closedball}[1]{B_{#1}} %the unit ball or any ball of $X$, use: $\closedball{X}$ or $\closedball{X}(x,r)$
\newcommand{\sphere}[1]{S_{#1}} %the unit sphere, use: $\sphere{X}$
\newcommand{\eps}{\varepsilon}
\newcommand{\Lip}{\mathrm{Lip}}
\newcommand{\diam}{\text{diam}\,}
\newcommand{\n}{\overline n}
\newcommand{\m}{\overline m}
\newcommand{\cU}{\mathcal{U}}
\newcommand{\wlim}{\text{w-}\lim}
\newcommand{\wslim}{w^*\text{-}\lim}
\newcommand{\Sz}{\mathrm{Sz}}
\theoremstyle{plain}
\newtheorem{theorem}{Theorem}[section]
\newtheorem{lemma}[theorem]{Lemma}
\newtheorem{corollary}[theorem]{Corollary}
\newtheorem{proposition}[theorem]{Proposition}
\newtheorem{claim}{Claim}
\newtheorem*{T1}{Theorem~\ref{ThmAUCandPropQp}} %To restate a theorem later
\newtheorem*{T2}{Theorem~\ref{Szlenk}} %To restate a theorem later
\newtheorem*{T3}{Theorem~\ref{prop:Igraphdual}} %To restate a theorem later
\newtheorem*{T4}{Theorem~\ref{Thmc0SepDualDist}} %To restate a theorem later
\newtheorem*{T5}{Theorem~\ref{ThmCoarUniEmbWSCinDualSp}} %To restate a theorem later
\theoremstyle{definition}
\newtheorem*{definition*}{Definition}
\newtheorem{definition}[theorem]{Definition}
\newtheorem{problem}[theorem]{Problem}
\newtheorem*{acknowledgments}{Acknowledgments}
\theoremstyle{remark}
\newtheorem{remark}[theorem]{Remark}
\title[Interlaced graphs and embeddings into  dual Banach spaces]{On  Kalton's interlaced graphs and nonlinear embeddings into  dual Banach spaces}
\begin{document}

%--------------- Addresses ----------------
\author[B. M. Braga]{Bruno de Mendon\c{c}a Braga}

\address[B. M. Braga]{University of Virginia, 141 Cabell Drive, Kerchof Hall, P.O. Box 400137, Charlottesville, USA}
\email{demendoncabraga@gmail.com}
\urladdr{https://sites.google.com/site/demendoncabraga}

\author[G. Lancien]{Gilles Lancien}
\address[G. Lancien]{Laboratoire de Math\'ematiques de Besan\c con, Universit\'e Bourgogne Franche-Comt\'e, CNRS UMR-6623, 16 route de Gray, 25030 Besan\c con C\'edex, Besan\c con, France}
\email{gilles.lancien@univ-fcomte.fr}

\author[C. Petitjean]{Colin Petitjean}
\address[C. Petitjean]{LAMA, Univ Gustave Eiffel, UPEM, Univ Paris Est Creteil, CNRS, F-77447, Marne-la-Vall\'ee, France}
\email{colin.petitjean@u-pem.fr}

\author[A. Proch\'azka]{Anton\'in Proch\'azka}
\address[A. Proch\'azka]{Laboratoire de Math\'ematiques de Besan\c con,
Universit\'e Bourgogne Franche-Comt\'e,
CNRS UMR-6623,
16, route de Gray,
25030 Besançon Cedex, France}
\email{antonin.prochazka@univ-fcomte.fr}

\date{} % uncomment to remove date from title

%--------------- ABSTRACT ----------------

\begin{abstract}
We study the nonlinear embeddability of Banach spaces and  the equi-embeddability of the family of Kalton's interlaced graphs $([\N]^k,d_\K)_k$ into dual spaces. Notably, we define and study a modification of Kalton's property $\mathcal Q$ that we call property $\mathcal{Q}_p$ (with $p \in (1,+\infty]$).
We show that if  $([\N]^k,d_\K)_k$ equi-coarse Lipschitzly embeds into $X^*$, then the Szlenk index of $X$ is greater than $\omega$, and that this is optimal, i.e., there exists a separable dual space $Y^*$ that contains $([\N]^k,d_\K)_k$ equi-Lipschitzly and  so that $Y$ has Szlenk index  $\omega^2$. We prove that $c_0$ does not coarse Lipschitzly embed into a separable dual space by a map with distortion strictly smaller than $\frac{3}{2}$.  We also show that neither $c_0$ nor $L_1$ coarsely embeds into a separable dual by a weak-to-weak$^*$ sequentially continuous map.
\end{abstract}

%--------------- KEYWORDS ----------------

\subjclass[2010]{Primary: 46B80, 46B10; Secondary: 46B85, 46B20}
\keywords{Interlaced graphs, coarse Lipschitz embeddings, Szlenk index, low distortion, weak sequential continuity}
%05C63 (Infinite graphs)
%46B03 (Isomorphic theory (including renorming) of Banach spaces)
%46B04 (Isometric theory of Banach spaces)
%46B06  (Asymptotic theory of Banach spaces)
%46B10 (Duality and reflexivity)
%46B20 (Geometry and structure of normed linear spaces)
%46B25 (Classical Banach spaces in the general theory)
%46B80  	Nonlinear classification of Banach spaces; nonlinear quotients
%46B85 (Embeddings of discrete metric spaces into Banach spaces; applications in topology and computer science)
%46T99 (None of the above, but in this section : Non linear Functional Analysis)
%54C25 (Embedding)
%54E35 (Metric spaces, metrizability)
%54E50 (Complete metric spaces)

\maketitle

% MAIN DOCUMENT
%------------------------------------------------------------------

\section{Introduction}

It was a long standing problem in the nonlinear theory of Banach spaces whether every metric space {uniformly or} coarsely embeds into a reflexive Banach space (we refer the reader to Section~\ref{SectionPrel} for definitions)
%Some partial positive results on this question were obtained for some classes of metric spaces -- for instance, N. Brown and E. Guentner  showed in \cite{BrownGuentner2005}*{Theorem 1} that every metric space with bounded geometry can be coarsely embedded into a reflexive space, and later F. Baudier and the second named author showed that every locally finite metric space Lipschitz embeds into a reflexive space (see \cite{BaudierLancien2008}) --,
{to which} a negative answer was only found  in 2007.
Indeed, N.~Kalton exhibited in \cite{Kalton2007Quarterly} a property for metric spaces, that he named property $\mathcal Q$, which serves as an obstruction to coarse embeddability into reflexive spaces (see Section~\ref{SectionPropQ} for precise statements).
Precisely, its absence is an obstruction to coarse embeddability into reflexive Banach spaces.
As it is easily checked, $c_0$ fails property $\mathcal Q$ and so it does not embed into any reflexive Banach space.
This property is defined in terms of the behaviour of
Lipschitz maps defined on a particular family of metric graphs: \textit{the Kalton's interlaced graphs} (see Section~\ref{s:KaltonGraphs}).

Furthermore, N. Kalton  proved the stronger result that any space $X$ coarsely containing $c_0$ must have some of its iterated duals nonseparable (see \cite{Kalton2007Quarterly}*{Theorem 3.6}).
Let us  point out that coarsely containing the James tree space $\mathcal{JT}$ would have the same impact on the iterated duals of $X$ \cite{LancienPetitjeanProchazka2018}*{Theorem 6.2}. The result of N. Kalton raises the following very natural problem:
\begin{problem}\label{ProblemUniversaln}
	Is there a universal $n\in\N$ so that if $c_0$ coarsely embeds into a Banach space $X$, then its $n$-th iterated dual $X^{(n)}$ is nonseparable?
\end{problem}
It is standard knowledge in the linear {(resp.\ non-linear)} theory of Banach spaces, that $c_0$ does not isomorphically {(resp.\ Lipschitz)} embed into any separable dual space. So it is also quite natural to wonder the following:	
\begin{problem}\label{ProbEmbc0SepDual}
Does $c_0$ coarsely embed into a  separable dual space?
\end{problem}
It is clear that a negative answer to this last problem would represent the strongest possible positive solution for Problem~\ref{ProblemUniversaln} (namely $n=2$). However, this last problem is still open even in the category of \textit{coarse-Lipschitz embeddings} (see Section~\ref{s:coarse} for a precise definition).
\begin{problem}\label{ProbEmbc0SepDual2}
	Does $c_0$ coarsely Lipschitz embed into a  separable dual space?
\end{problem} The current paper revolves around these questions. Therefore, inspired by N. Kalton, we not only study different notions of nonlinear embeddability  of $c_0$ into $X$, but we also analyse to which extent the equi-embeddability of the Kalton's interlaced graphs into a Banach space $X$ forces the dual of $X$ to be nonseparable.

We now describe the main findings of this paper. Throughout this paper, $[\N]^k$ denotes the
set
of all subsets of $\N$ with $k$ elements, $[\N]^{<\omega}$ denotes the
set
of all finite subsets of $\N$, and $d_\K=d_{\K,k}$ denotes Kalton's interlaced metric on $[\N]^k$. There exists a metric on $[\N]^{<\omega}$ which extends all metrics $d_{\K,k}$ simultaneously, and we also denote this metric  by $d_\K$ (see Subsection~\ref{s:KaltonGraphs} for precise definitions).

First of all,  inspired by \cite{KaltonRandrianarivony2008}, we define a modification of Kalton's property $\Q$ that we call property $\Q_p$ for $p\in (1,\infty]$. In a nutshell, while property~$\mathcal{Q}$ consists in a strong concentration inequality for Lipschitz maps $f : ([\N]^{k},d_{\K}) \to \R$ defined on the interlaced graphs, property $\Q_p$ is a concentration inequality proportional to $k^{1/p}$ (see Definition~\ref{PropertyQp}). In this way, property~$\mathcal{Q}$ may be seen as property $\Q_\infty$. It is readily seen that property $\Q_p$ is a coarse-Lipschitz invariant. The first main result relates this property with asymptotic uniform convexity (see Subsection~\ref{SubsectionAUSAUC} for definitions of asymptotic properties).
\begin{T1}
	Let $p\in (1,+\infty]$ and let $q\in [1,\infty)$ be the conjugate exponent of $p$. If a dual space $X^*$ admits an equivalent $q$-AUC$^*$ dual norm then $X^*$ has property $\mathcal{Q}_p$.
\end{T1}
We also prove that property~$\Q_{q}$ is equivalent to reflexivity for a certain class of Banach spaces (namely those having the $p$-alternating Banach-Saks property, see Corollary~\ref{cor:BanachSaks}). These results can be used to rule out the coarse-Lipschitz embeddability
between
certain Banach spaces (see Corollaries~\ref{cor:JamesQ_p} and \ref{optimalJamesQp}).

Next, recall that a
separable
Banach space $X$ has separable dual if and only if its Szlenk index $\mathrm{Sz}(X)$ is countable (see Subsection~\ref{SubsectionSzlenk} for a definition of the Szlenk index). Hence, $\mathrm{Sz}(X)$ can be seen as a quantitative measurement of ``how close to be nonseparable'' is $X^*$.
%The Szlenk index can also be used in order to determine the asymptotic uniform smoothness of a space. Indeed, a separable Banach space $X$ has an equivalent  asymptotically uniformly smooth norm if and only if $\Sz(X)\leq \omega$ (see Subsection \ref{SubsectionAUSAUC} for definitions of asymptotic properties).
We obtain the following relation between containment of Kalton's interlaced graphs and the Szlenk index.
\begin{T2} Let $X$ be a Banach space. If the family of Kalton's interlaced graphs $(([\N]^{k},d_{\K}))_{k\in \N}$ equi-coarse Lipschitz embeds into $X^*$, then $\Sz(X) > \omega$, where $\omega$ denotes the first infinite ordinal.
\end{T2}

We also prove that if $X$ has summable Szlenk index then $X^*$ enjoys property~$\mathcal{Q}$. Moreover, Theorem~\ref{Szlenk} is actually optimal and the containment of Kalton's interlaced graphs cannot help us any further in the problem of whether $c_0$ coarsely embeds into a separable dual. Indeed, we show the following.
\begin{T3}
	The Kalton graph $([\N]^{<\omega}, \dK)$ Lipschitz embeds into a separable dual space $X^*$ with $\Sz(X) = \omega^2$.
\end{T3}

Although we were not able to obtain a negative answer to Problem \ref{ProbEmbc0SepDual2}, we obtained a restriction for the existence of a coarse Lispchitz embedding  from $c_0$ into $X^*$ based on the distortion of such embeddings. Before presenting our result, let us recall this definition. Let $X$ and $Y$ be Banach spaces and $f:X\to Y$ be a coarse Lipschitz embedding. We say that \emph{$f$ has coarse Lipschitz distortion strictly less than $K$} if there exist $A,B,C,D>0$ with $AC<K$ so that
\[\frac{1}{A}\|x-y\|-B\leq \|f(x)-f(y)\|\leq C\|x-y\|+D\]
for all $x,y\in X$.
%Clearly, a Lipschitz embedding with Lipschitz distortion strictly less than $K$ also has coarse Lipschitz distortion strictly less than $K$.
We obtain the following.

\begin{T4}
If $c_0$ coarse Lipschitz embeds into a dual space $X^*$ with coarse Lipschitz distortion strictly less than $\frac{3}{2}$, then $X$ contains an isomorphic copy of $\ell_1$.
\end{T4}

In a different direction, we show that Problem \ref{ProbEmbc0SepDual} has a negative answer with the extra assumption that the embedding is weak-to-weak$^*$ sequentially continuous. Moreover, just as in the isomorphic theory, this also holds for the space $L_1$.

\begin{T5}
Neither $c_0$ nor $L_1$ can be coarsely (resp. uniformly) embedded into a separable dual Banach space by a map that is weak-to-weak$^*$ sequentially continuous.
\end{T5}

Since the ``weak-to-weak$^*$ sequential continuity'' hypothesis is not standard, a word on Theorem~\ref{ThmCoarUniEmbWSCinDualSp} is necessary. The first named author has begun  the study of coarse and coarse Lispchitz embeddings between Banach  spaces which also satisfy some continuity condition with respect to the weak topologies \cites{Braga2018IMRN,Braga2019Jussieu}. For instance, in contrast  to the famous result of I.~Aharoni that $c_0$ contains a Lipschitz copy of
every separable metric space \cite{Aharoni1974Israel}*{Theorem in page 288}, any Banach space not containing $\ell_1$ which can be coarsely embedded into $c_0$ by a weakly sequentially continuous map must actually be isomorphic to a subspace of $c_0$ \cite{Braga2019Jussieu}*{Theorem 1.6}. Also, although $\ell_p$ (resp. $L_p$) coarsely embeds into $\ell_q$ (resp. $L_q$) for all $p,q\in [1,2]$, the same is only true for weak sequentially continuous coarse embeddings $\ell_p\to \ell_q$ if $p\leq q$ \cite{Braga2018IMRN}*{Corollary 1.7}.
In particular, Theorem~\ref{ThmCoarUniEmbWSCinDualSp} shows that although the theory of coarse embeddability for members of the families $(\ell_p)_{p\in [1,2]}$ and    $(L_p)_{p\in [1,2]}$ are the same, this is not the case for weakly sequentially continuous embeddings. Indeed,  $L_1$ does not coarsely embed into $L_q$ by a weakly sequentially continuous map for any $q>1$, but $\ell_p$ does so into $\ell_q$ for all $q\geq p$.

This summarises our main findings. We now   give the definitions and terminology necessary for this paper.

\section{Preliminaries}\label{SectionPrel}

\subsection{Embeddings between metric spaces}\label{s:coarse}

Let $(M,d_M)$, $(N,d_N)$ be two metric spaces and $f \colon M \to N$ be a map. We define the \emph{compression modulus $\rho_f$} by letting
\[\rho_f (t) = \inf \{ d_N(f(x),f(y)) \, : \, d_M(x,y) \geq t \}\]
for each $t\geq 0$,  and the \emph{expansion modulus $\omega_f$} by letting
\[\omega_f (t) = \sup \{ d_N(f(x),f(y)) \, : \, d_M(x,y) \leq t \}\]
for all $t\geq 0$. We adopt the convention $\sup(\emptyset)=0$ and $\inf(\emptyset)=\infty$.
Note that for every $x,y \in M$,
\[\rho_f (d_M(x,y)) \leq d_N(f(x),f(y)) \leq \omega_f (d_M(x,y)).\]
Moreover, the  map $f:M\to N$ is called
\begin{enumerate}[(i)]
    \item a \emph{coarse embedding}  if $\lim_{t \to \infty} \rho_f (t) = \infty$ and  $\omega_f (t) < \infty$ for every $t \in [0,+\infty)$;
    \item a \emph{coarse Lipschitz embedding}  if there exits $A,B,C,D>0$ such that $\rho_f(t) \geq At-C$ and  $\omega_f (t) \leq Bt +D$ for every $t \in [0,+\infty)$;
    \item a \emph{Lipschitz embedding} if there exits $A,B>0$ such that $\rho_f(t) \geq At$ and  $\omega_f (t) \leq Bt $ for every $t \in [0,+\infty)$.
\end{enumerate}
Let $(M_i)_{i \in I}$ be a family of metric spaces. We say that the family $(M_i)_{i \in I}$ \emph{equi-coarsely embeds} (\emph{equi-coarsely Lipschitz embeds} and \emph{equi-Lipschitz embeds} respectively) into a metric space $N$ if there exist two maps
$$\rho, \, \omega \colon [0,+\infty) \to [0,+\infty)$$
and a family of maps $(f_i \colon M_i \to N)_{i \in I}$ such that
\begin{enumerate}
    \item $\rho(t) \leq \rho_{f_i}(t)$ for every $i \in I$ and $t \in [0,\infty) $,
    \item $\omega_{f_i}(t) \leq \omega(t)$ for every $i \in I$ and $t \in [0,\infty) $, and
    \item the maps $\rho$ and $\omega$ satisfy the properties (i) above (respectively (ii) for coarse Lipschitz embedding and (iii) for Lipschitz embedding).
\end{enumerate}

In order to refine the scale of coarse embeddings between Banach spaces, we will also shortly use the following notion. Let $X$ and $Y$ be two Banach spaces. We define $\alpha_Y(X)$ as the supremum of all $\alpha \in [0,1)$ for which there exists a coarse Lipschitz map $f:X\to Y$ (i.e., the expansion modulus $\omega_f$ is bounded above by an affine map) and $A,C$ in $(0,\infty)$ so that $\rho_f(t)\ge At^\alpha-C$ for all $t>0$. Then, $\alpha_Y(X)$ is called the {\it compression exponent of $X$ in $Y$}. Note that in the setting of Banach spaces, it is enough to impose that $\omega_f(a)<\infty$ for some $a>0$ to automatically get that $f$ is coarse Lipschitz. Indeed, decomposing any segment $[x,y]$ in $X$ into $\left\lfloor \frac{\|x-y\|}{a} \right\rfloor +1$ segments of length at most $a$, we obtain that $\|f(x)-f(y)\|\le \frac{\omega_f(a)}{a}\|x-y\|+\omega_f(a)$. This is more generally true when $X$ is a so-called metrically convex metric space.

\subsection{Kalton interlaced graphs}
\label{s:KaltonGraphs}
Given $k\in\N$ and an infinite $\M\subset \N$, let $[\M]^k$ denote the set of all strictly increasing $k$-tuples in $\M$. Given distinct $\bar n=(n_1,\ldots,n_k),\bar m=(m_1,\ldots,m_k)\in [\M]^k$, define a graph structure on $[\M]^k$ by declaring $\bar n\neq \bar m$ adjacent if and only if  either
\[n_1\leq m_1\leq n_2\leq \ldots\leq n_k\leq m_k\ \text{ or }\ m_1\leq n_1\leq m_2\leq \ldots\leq m_k\leq n_k.\]
The metric $d_\K^k$ is defined as the shortest path metric in the graph $[\M]^k$. The family $([\N]^k,d_\K^k)_k$ is the family of \emph{Kalton's interlaced graphs}.

For $k\in \N$  and $\M$ an infinite subset of $\N$, we put
$[\M]^{\le k}=\bigcup_{m\leq k}[\M]^m$, $[\M]^{< \omega}=\bigcup_{m\in\N}[\M]^m$ and  $[\M]^\omega=\{ S\subset \M: S\text{ is infinite}\} $. Just as in the finite case, the elements of $[\M]^\omega$ are always written as strictly increasing infinite tuples, i.e., if $\bar n=(n_1,n_2,\ldots)\in [\M]^\omega$, we always have $n_j< n_{j+1}$ for all $j\in\N$.

The distance $\dk$ is independent of the infinite subset of $\N$ chosen. So, given $k\in\N$, $\M_1\in[\N]^\omega$ and $\M_2\in [\M_1]^\omega$,  $[\M_2]^k$ is naturally  a metric subspace of $[\M_1]^k$. This is implied by the following proposition obtained in \cite{LancienPetitjeanProchazka2018}, which moreover  gives us an explicit formula to compute $\dk$.

\begin{proposition}[Proposition 2.2 of \cite{LancienPetitjeanProchazka2018}]
	Letting \[	d_\K(\n,\m) =\sup \Big\{\big| |\n\cap S| - |\m\cap S| \big| \; : \; S \text{ segment of } \N \Big\}\]
	for all $\n,\m\in [\N]^{<\omega}$, we have that $\dk=d_\K\restricted_{[\N]^k}$ for all $k\in\N$.\label{p:KaltonDistanceFormula}
\end{proposition}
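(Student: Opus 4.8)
The plan is to prove the two assertions of Proposition~\ref{p:KaltonDistanceFormula} separately: first that the right-hand formula $d_\K(\n,\m)=\sup\{\,||\n\cap S|-|\m\cap S||: S\text{ a segment of }\N\,\}$ actually defines a metric on $[\N]^{<\omega}$, and second that its restriction to $[\N]^k$ coincides with the shortest-path metric $\dk$. Throughout, a \emph{segment} of $\N$ means an interval $\{a,a+1,\ldots,b\}$ (including the empty set and all half-lines intersected with $\N$); for $\n\in[\N]^{<\omega}$ and a segment $S$ write $c_S(\n)=|\n\cap S|$. The triangle inequality and symmetry for the formula are immediate from $\big||c_S(\n)-c_S(\lbar)|\big|\le |c_S(\n)-c_S(\m)|+|c_S(\m)-c_S(\lbar)|$ taken inside the supremum; that $d_\K(\n,\m)=0$ forces $\n=\m$ follows by testing singleton segments $S=\{j\}$, which detects every element of the symmetric difference. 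So the formula is a genuine metric, and it clearly does not depend on which infinite $\M\supseteq \n\cup\m$ we ambient it in, since segments of $\M$ and segments of $\N$ give the same counting differences once restricted to $\n\cup\m$. (Strictly, one should note $\n,\m$ need not have the same cardinality here, so $[\N]^{<\omega}$ carries this metric globally; for the second part we fix $k$ and stay inside $[\N]^k$.)

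For the harder inequality $\dk\ge d_\K\restricted_{[\N]^k}$, I would show that the counting functional is $1$-Lipschitz along graph edges: if $\n,\m\in[\N]^k$ are adjacent, say $n_1\le m_1\le n_2\le\cdots\le n_k\le m_k$, then for any segment $S=\{a,\ldots,b\}$ one has $|c_S(\n)-c_S(\m)|\le 1$. Indeed, the interlacing condition says that between consecutive $m$'s there is exactly one $n$ and vice versa, so in any interval the numbers of $n$-points and of $m$-points can differ by at most one (a ``ballot'' type observation: order the multiset $\n\cup\m$; the two sequences alternate, hence in every window the partial counts stay within $1$ of each other). Summing along a geodesic path of length $\dk(\n,\m)$ from $\n$ to $\m$ and using the triangle inequality for the $S$-counting pseudometric $(\n,\m)\mapsto|c_S(\n)-c_S(\m)|$ gives $|c_S(\n)-c_S(\m)|\le \dk(\n,\m)$ for every segment $S$; taking the sup over $S$ yields $d_\K(\n,\m)\le\dk(\n,\m)$.

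The reverse inequality $\dk\le d_\K\restricted_{[\N]^k}$ is the main obstacle and requires constructing short paths. The strategy is to prove: if $d_\K(\n,\m)\le 1$ then $\dk(\n,\m)\le 1$, i.e.\ the only way for the supremum over segments to be $\le 1$ is that $\n=\m$ or $\n,\m$ are adjacent; then a general $\n,\m$ with $d_\K(\n,\m)=d$ can be connected by a chain $\n=\lbar_0,\lbar_1,\ldots,\lbar_d=\m$ in $[\N]^k$ with $d_\K(\lbar_{i-1},\lbar_i)\le 1$ for each $i$, giving $\dk(\n,\m)\le d$. For the base case: suppose $\n\ne\m$ and $|c_S(\n)-c_S(\m)|\le1$ for all segments $S$. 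Testing initial segments $S=\{1,\ldots,j\}$ shows the ``partial sums'' of $\n$ and $\m$ never differ by more than $1$, which, when $\n\ne\m$, is exactly the statement that interleaving the two $k$-tuples produces an alternating pattern — i.e.\ the adjacency condition. For the general decomposition step, given $\n,\m$ with $d_\K(\n,\m)=d\ge2$, I would build an intermediate $\lbar\in[\N]^k$ ``halfway'' — concretely, scan from the left and at each coordinate move $\n$ one step toward $\m$ only where the discrepancy of partial counts is maximal, producing $\lbar$ with $d_\K(\n,\lbar)\le1$ and $d_\K(\lbar,\m)\le d-1$; iterating $d$ times reaches $\m$. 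The bookkeeping here — proving the constructed $\lbar$ is still a strictly increasing $k$-tuple and that both distances drop as claimed — is the delicate part; Lancien–Petitjean–Prochazka handle exactly this, and I expect their argument to push the discrepancy down one unit at a time along a carefully chosen geodesic, so I would follow that route, emphasizing the alternating/ballot lemma as the engine behind every step.
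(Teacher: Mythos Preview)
The present paper does not prove this proposition at all: it is quoted verbatim from \cite{LancienPetitjeanProchazka2018} and used as a black box, so there is no ``paper's own proof'' to compare your proposal against. That said, your outline is the natural one and is essentially what the cited reference does; let me only flag one genuine slip in your base case.

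You claim that if $d_\K(\n,\m)\le 1$ then testing \emph{initial} segments $S=\{1,\ldots,j\}$ already forces the adjacency (interlacing) condition. This is false: take $k=2$, $\n=(1,4)$, $\m=(2,3)$. The initial-segment counts differ by at most $1$ at every $j$ (the running difference is $1,0,-1,0$), yet $\n$ and $\m$ are not adjacent, and indeed $d_\K(\n,\m)=2$ via the segment $S=\{2,3\}$. The correct argument uses the full hypothesis: for an arbitrary segment $\{a,\ldots,b\}$ one has
\[
c_{\{a,\ldots,b\}}(\n)-c_{\{a,\ldots,b\}}(\m)=h(b)-h(a-1),
\]
where $h(j)=c_{\{1,\ldots,j\}}(\n)-c_{\{1,\ldots,j\}}(\m)$. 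Thus $d_\K(\n,\m)\le 1$ says exactly that the \emph{range} of $h$ has diameter at most $1$; since $h(0)=h(\infty)=0$ and $h$ is integer-valued, this forces $h$ to take values in $\{0,1\}$ or in $\{-1,0\}$, and either of these is precisely one of the two interlacing orderings. With this repair your base case goes through, and the rest of your plan (peel off one unit of discrepancy at a time to build a path of length $d_\K(\n,\m)$) is the standard route.
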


The formula from the previous proposition also defines a graph metric on $[\N]^{<\omega}$ whose restriction to $[\N]^k$ of course coincides with $\dk$. From now on we simply denote the interlaced metric by $\dK$  (thus omitting the reference to $k$).

\begin{remark}\label{RemarkEquiCoarseLip}
It is easy to see that the sequence $([\N]^{k}, \dK)_k$ equi-coarsely Lipschitz embeds into a Banach space $X$ if and only if it equi-Lipschitz embeds into $X$. Indeed, this follows from the fact that, for any $k \in \N$, the map $f : ([\N]^k, \dK) \to ([\N]^{2k}, \frac12\dK)$  defined by:
	\[ \forall \, \n=(n_1,\ldots,n_k) \in [\N]^k, \quad f(\n) = (2n_1,2n_1+1, \ldots  , 2n_k, 2n_k +1) \]
is an isometry. Then, composing the isometric embedding of $([\N]^k, \dK)$ into $([\N]^{2^rk},2^{-r}\dK)$ for $r$ in $\N$ large enough and a rescaling of the equi-coarse Lipschitz embedding of $([\N]^{2^rk},\dK)$ into $X$ yields the conclusion.
\end{remark}

 For $\m=(m_1,m_2,\ldots, m_r)\in [\N]^{<\omega}$ and $\n=(n_1,n_2,\ldots, n_s)\in [\N]^{<\omega}$, we write $\m \prec \n$, if $r<s$ and  $m_i=n_i$, for $i=1,2,\ldots, r$, and we write $\m\preceq \n$ if $\m \prec \n$ or $\m=\n$. Thus $\m \preceq \n$ if $\m$ is an initial segment of $\n$. At last, for $\n=(n_1,\dots,n_k)$ and $\m=(m_1,\ldots,m_l)$ in $[\N]^{<\omega}$, we write $\n <\m$ if $n_k<m_1$.

\subsection{Szlenk index}\label{SubsectionSzlenk}
Let $X$ be  Banach space and $K$ be a weak$^*$ compact subset of $ X^*$. For each $\eps>0$, define
\[s_\eps(K)=K\setminus\{V\subset X^*\colon V\text{ weak$^*$ open and }\mathrm{diam}(V\cap K)< \eps\} . \]
Given an ordinal $\xi$,  $s_\eps^\xi(K)$ is defined inductively by letting $s_\eps^0(K)=s_\eps(K)$,  $s_\eps^{\xi+1}(K)=s_\eps(s^\xi_\eps(K))$ and   $s_\eps^{\xi}(K)=\cap_{\zeta<\xi}s^\zeta_\eps(K)$ if $\xi$ is a limit ordinal. We then define $\mathrm{Sz}(X,\eps)$ as the least ordinal $\xi$ so that $s_\eps^\xi(B_{X^*})=\emptyset$, if such ordinal exists, and  $\mathrm{Sz}(X,\eps)=\infty$ otherwise. The \emph{Szlenk index of $X$} is defined as
\[ \mathrm{Sz}(X)= \sup_{\eps>0}\mathrm{Sz}(X,\eps).\]

A Banach space $X$ is said to have \emph{summable Szlenk index} if there exists $c>0$ so that for all $\eps_1,\ldots,\eps_n>0$ the inequality
\[s_{\eps_n}(s_{\eps_{n-1}}(\ldots(s_{\eps_2}(s_{\eps_1}(B_{X^*}))\ldots ))\neq\emptyset\]
implies $\eps_1+\ldots+\eps_n\leq c$. It is known that any subspace of $c_0$ has summable Szlenk index, but the converse is not true (see details in Section~\ref{SectionConcentrationSzlenk}).

The Szlenk index of a Banach space is closely related to the behavior of the so-called weak$^*$-null or weak$^*$-continuous trees in its dual. So let us give the necessary definitions. For a Banach space $X$, we call \emph{tree of height $k$} in $X$ any family $(x(\n))_{\n\in[\N]^{\le k}}$, with $x(\n)\in X$. Then, if $\M \in [\N]^\omega$, $(x(\n))_{\n\in[\M]^{\le k}}$ will be called a \emph{full subtree} of $(x(\n))_{\n\in[\N]^{\le k}}$. For $\M \in [\N]^\omega$, a tree $(x^*(\n))_{\n\in[\M]^{\le k}}$ in $X^*$ is called \emph{weak$^*$-null} if for any $\n=(n_1,\dots,n_j) \in [\M]^{\le k-1}\setminus \{\emptyset\}$, the sequence $(x^*(n_1,\ldots,n_{j},t))_{t>n_{j},t\in \M}$ is weak$^*$-null and the sequence $(x^*(t))_{t\in \M}$ is also weak$^*$-null. It is called \emph{weak$^*$-continuous} if for any $\n=(n_1,\dots,n_j) \in [\M]^{\le k-1}\setminus \{\emptyset\}$, the sequence $(x^*(n_1,\ldots,n_{j},t))_{t>n_{j},t\in \M}$ is weak$^*$-converging to $x^*(n_1,\ldots,n_{j})$ and the sequence $(x^*(t))_{t\in \M}$ is also weak$^*$-converging to $x^*_\emptyset$. Then, the following proposition is a direct consequence of the definition of the Szlenk index.
\begin{proposition}\label{Szlenk-trees} Let $X$ be a Banach space and assume that $(x^*(\n))_{\n\in[\M]^{\le k}}$ is a weak$^*$-continuous tree in $B_{X^*}$ such that there exist $i_1<\cdots<i_l$ in $\{0,\ldots,k-1\}$ and $K_{i_1},\ldots,K_{i_l}>0$ satisfying
$$\forall s\in \{1,\ldots,l\}\ \forall \n \in [\M]^{i_s}\ \ \limsup_{t\to \infty, t\in \M}\|x^*(\n,t)-x^*(\n)\|\ge K_{i_s}.$$
Then
$$x^*_\emptyset \in s_{K_{i_l}}\ldots s_{K_{i_1}}(B_{X^*}).$$
\end{proposition}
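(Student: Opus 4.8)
The plan is to prove the proposition by a single downward induction on the levels of the tree, peeling off one Szlenk derivative each time a distinguished level $i_s$ is crossed.

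First I would isolate the two elementary facts that make this work. (1) For weak$^*$-compact $A \subseteq X^*$ the set $s_\eps(A)$ is again weak$^*$-compact: the points it removes from $A$ are precisely those admitting a weak$^*$-open neighbourhood $V$ with $\mathrm{diam}(V \cap A) < \eps$, and the union of all such $V$ is weak$^*$-open; iterating, every set of the form $s_{\eps_r}(\cdots s_{\eps_1}(B_{X^*})\cdots)$ is weak$^*$-closed. (2) If $A \subseteq X^*$, $y^* \in A$, and $(y^*_n)_n \subseteq A$ satisfies $y^*_n \to y^*$ weak$^*$ and $\limsup_n \|y^*_n - y^*\| \ge \delta$, then $y^* \in s_\delta(A)$: any weak$^*$-open $V \ni y^*$ eventually contains $y^*_n$, so $\mathrm{diam}(V \cap A) \ge \|y^*_n - y^*\|$ for all large $n$, hence $\mathrm{diam}(V \cap A) \ge \delta$, which means $y^*$ is not removed by $s_\delta(\cdot)$.

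For the induction, I would set, for $0 \le j \le k$, $a(j) := \min\{s \le l : i_s \ge j\}$ whenever this set is nonempty, and define
\[ L_j := s_{K_{i_{a(j)}}}\Big( s_{K_{i_{a(j)+1}}}\big( \cdots s_{K_{i_l}}(B_{X^*}) \cdots \big) \Big), \]
with the convention $L_j := B_{X^*}$ when $j > i_l$. Thus $L_k = B_{X^*}$ (since $i_l \le k-1$), and $L_0$ is exactly the nested derivative appearing in the conclusion, the innermost derivative being the one attached to the deepest distinguished level $i_l$. The goal is then to show, by downward induction on $j$, that $x^*(\n) \in L_j$ for every $\n \in [\M]^j$; taking $j = 0$ gives $x^*_\emptyset \in L_0$. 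For $j > i_l$ there is nothing to prove. Assuming the claim at level $j+1$: if $j$ is not one of the $i_s$, then $a(j) = a(j+1)$ and $L_j = L_{j+1}$, so for $\n \in [\M]^j$ the sequence $(x^*(\n,t))_t$ lies in $L_{j+1} = L_j$ by the inductive hypothesis, converges weak$^*$ to $x^*(\n)$ by weak$^*$-continuity of the tree, and $L_j$ is weak$^*$-closed, whence $x^*(\n) \in L_j$.

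The remaining case is $j = i_s$, where $L_j = s_{K_{i_s}}(L_{j+1})$. Fix $\n \in [\M]^{i_s}$. The sequence $(x^*(\n,t))_t$ lies in $L_{j+1}$ by the inductive hypothesis and converges weak$^*$ to $x^*(\n)$; since $L_{j+1}$ is weak$^*$-closed, already $x^*(\n) \in L_{j+1}$. Now fact (2), applied with $A = L_{j+1}$, $\delta = K_{i_s}$, and the hypothesis $\limsup_t \|x^*(\n,t) - x^*(\n)\| \ge K_{i_s}$, yields $x^*(\n) \in s_{K_{i_s}}(L_{j+1}) = L_j$, closing the induction. The only point that calls for genuine care — and the step I would flag as the crux — is this jump case: one must observe that, $L_{j+1}$ being weak$^*$-closed, the parent functional $x^*(\n)$ already lies in it, so that applying $s_{K_{i_s}}$ is legitimate and no hypothesis beyond weak$^*$-continuity and the size of the jumps is needed. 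Everything else is a routine unwinding of the definition of the Szlenk derivative, together with the bookkeeping of \emph{which derivative sits at which level}.
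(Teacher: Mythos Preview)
Your argument is correct and is exactly the routine unwinding the paper intends; the paper offers no proof beyond the sentence ``a direct consequence of the definition of the Szlenk index'', and your downward induction on the tree levels, together with the two elementary facts you isolate, is the canonical way to make that sentence precise.

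One small remark on bookkeeping: what your induction actually yields is
\[
L_0=s_{K_{i_1}}\big(s_{K_{i_2}}(\cdots s_{K_{i_l}}(B_{X^*})\cdots)\big),
\]
with the derivative attached to the \emph{deepest} distinguished level innermost --- exactly as you say --- whereas the displayed conclusion in the paper lists the operators in the opposite order. This appears to be a typographical slip in the paper rather than an error on your part, and it is immaterial in every use the paper makes of the proposition (in the summable--Szlenk application only the sum of the constants matters, and in the other application all the constants coincide).
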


\subsection{Asymptotic uniform smoothness and convexity}\label{SubsectionAUSAUC}

Let $X$ be a Banach space. We denote the set of all   closed subspaces of $X$ with finite codimension by $\mathrm{CoFin}(X)$. We define the  \emph{modulus of   asymptotic uniform smoothness  of $X$}  by letting
\[\overline{\rho}_X(t)=\sup_{x\in \partial B_{X}}\inf_{E\in \mathrm{CoFin}(X)}\sup_{y\in \partial B_{E}}\|x+ty\|-1\]
for each $t\geq 0$. The space $X$ is \emph{asymptotically uniformly smooth} (abbreviated by \emph{AUS}) if $\lim_{t\to 0^+}\overline{\rho}_X(t)/t=0$ for all $t>0$. Let $p\in (1,\infty]$. We say that  $X$ is  \emph{ $p$-asymptotically uniformly smooth} (abbreviated by \emph{$p$-AUS}) if there exists $C>0$ so that $\overline{\rho}_X(t)\leq Ct^p$ for all $t\in [0,1]$.

Let $X^*$ be a dual space. We denote the set of all weak$^*$ closed subspaces of $X^*$ with finite codimension by $\mathrm{CoFin}^*(X^*)$. We define the  \emph{modulus of weak$^*$ asymptotic uniform convexity of $X^*$}  by letting
\[\overline{\delta}_{X}^*(t)=\inf_{x^*\in \partial B_{X^*}}\sup_{E\in \mathrm{CoFin}^*(X^*)}\inf_{y^*\in \partial B_{E}}\|x^*+ty^*\|-1\]
for each $t\geq 0$. The space $X^*$ is \emph{weak$^*$ asymptotically uniformly convex} (abbreviated by \emph{AUC$^*$}) if $\overline{\delta}_X^*(t)>0$ for all $t>0$. Let $p\in [1,\infty)$. We say that  $X^*$ is  \emph{weak$^*$ $p$-asymptotically uniformly convex} (abbreviated by \emph{$p$-AUC$^*$}) if there exists $C>0$ so that $\overline{\delta}_X^*(t)\geq Ct^p$ for all $t\in [0,1]$.

We first recall the following classical duality result concerning these moduli (see  \cite{DKLR}*{Corollary 2.4}).

\begin{proposition}\label{duality} Let $X$ be a Banach space.
\begin{enumerate}[(i)]
\item Then $\|\ \|_X$ is AUS if and and only if $\|\ \|_{X^*}$ is AUC$^*$.
\item If $p\in (1,\infty]$ and $q\in [1,\infty)$ are conjugate exponents, then $\|\ \|_X$ is $p$-AUS if and and only if $\|\ \|_{X^*}$ is $q$-AUC$^*$.
\end{enumerate}
\end{proposition}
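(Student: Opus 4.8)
The statement is the classical duality between the asymptotic smoothness of $X$ and the weak$^*$ asymptotic convexity of $X^*$, recorded in \cite{DKLR}*{Corollary 2.4}; part (i) is its qualitative form and part (ii) its power-type refinement. The plan is as follows. The first step is to rewrite both moduli in terms of (weak$^*$-)null nets. Fixing $x\in\partial B_X$ and writing $\overline{\rho}_X(t,x)=\inf_{E\in\mathrm{CoFin}(X)}\sup_{y\in\partial B_E}\|x+ty\|-1$, one checks that
\[
\overline{\rho}_X(t,x)=\sup\big\{\textstyle\limsup_\gamma\|x+ty_\gamma\|-1 \ :\ (y_\gamma)\subset\partial B_X,\ (y_\gamma)\text{ weakly null}\big\},
\]
the inequality ``$\geq$'' coming from the fact that if $E=\bigcap_{i=1}^n\ker x_i^*$ then $\dist(y,E)\leq C\sum_i|x_i^*(y)|$, so a weakly null net is eventually almost inside any prescribed $E$, and ``$\leq$'' from the fact that, conversely, a net of near-optimal unit vectors indexed by $\mathrm{CoFin}(X)$ directed by reverse inclusion is automatically weakly null. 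Taking the supremum over $x$ gives a net description of $\overline{\rho}_X(t)$; dually, using that one may restrict to weak$^*$-closed finite-codimensional subspaces,
\[
\overline{\delta}_X^*(t)=\inf\big\{\textstyle\liminf_\gamma\|x^*+ty^*_\gamma\|-1 \ :\ x^*\in\partial B_{X^*},\ (y^*_\gamma)\subset\partial B_{X^*},\ (y^*_\gamma)\text{ weak}^*\text{ null}\big\}.
\]
Along the way I would record the elementary estimates $\overline{\rho}_X(t)\leq(t/s)\overline{\rho}_X(s)$ and $\overline{\delta}_X^*(t)\leq(t/s)\overline{\delta}_X^*(s)$ for $t\leq s$ (writing $x+ty$ as a convex combination of $x$ and $x+sy$), so that $t\mapsto\overline{\rho}_X(t)/t$ and $t\mapsto\overline{\delta}_X^*(t)/t$ are nondecreasing; in particular ``$X$ is not AUS'' means $\overline{\rho}_X(t)\geq ct$ for all small $t$ and some $c>0$, and $\overline{\delta}_X^*$ is nondecreasing.

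The second step connects the two net descriptions through Hahn--Banach. For ``$X^*$ AUC$^*$ $\Rightarrow$ $X$ AUS'', one argues by contraposition: given $x\in\partial B_X$ and a weakly null $(y_\gamma)\subset\partial B_X$ with $\limsup_\gamma\|x+ty_\gamma\|\geq 1+ct-\eps$, pick $z^*_\gamma\in\partial B_{X^*}$ norming $x+ty_\gamma$ and pass to a subnet with $z^*_\gamma\to x^*$ weak$^*$ and $z^*_\gamma(y_\gamma)\to\ell$; then $x^*(x)+t\ell\geq 1+ct-\eps$, so $\ell$ stays bounded away from $0$, the net $u^*_\gamma:=(z^*_\gamma-x^*)/\|z^*_\gamma-x^*\|$ is (along a further subnet) weak$^*$ null in $\partial B_{X^*}$, and feeding $x^*/\|x^*\|$ and $(u^*_\gamma)$ into the $\overline{\delta}_X^*$-description together with $\|z^*_\gamma\|=1$ forces $\overline{\delta}_X^*$ to vanish at a positive argument comparable to $c$ (after letting $t\to0$ and using monotonicity of $\overline{\delta}_X^*$); this direction stays entirely inside $X^*$, whose ball is weak$^*$ compact. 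The reverse implication ``$X$ AUS $\Rightarrow$ $X^*$ AUC$^*$'' is dual but must be run, when $X$ is non-reflexive, inside $X^{**}$: from a witness $x^*\in\partial B_{X^*}$, $(y^*_\gamma)$ weak$^*$ null with $\|x^*+s_0y^*_\gamma\|\to1$, one takes unit vectors $x_\gamma$ nearly norming $y^*_\gamma$ and analyses their weak$^*$ cluster behaviour in $X^{**}$ to produce a unit vector and a weak$^*$ null net there along which the norm grows linearly in $s_0$, contradicting AUS of $X$ since asymptotic uniform smoothness transfers between $X$ and its bidual. For the quantitative statement (ii), the same cluster-point computation is carried out with explicit constants: writing $\|x+sy_\gamma\|=z^*_\gamma(x)+s\,z^*_\gamma(y_\gamma)$, applying $q$-AUC$^*$ of $X^*$ to $x^*/\|x^*\|$ and $(u^*_\gamma)$ at the scale $\|z^*_\gamma-x^*\|/\|x^*\|$, and then optimising over the ``defect'' $\theta=1-\|x^*\|$ of the cluster point, one is led to a one-variable Young-type optimisation $\max_{\theta\geq0}(As\,\theta^{1/q}-\theta)=\mathrm{const}\cdot s^{p}$ with $\tfrac1p+\tfrac1q=1$, which yields exactly $\overline{\rho}_X(s)\leq Cs^p$; the converse bound is symmetric. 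Only the range $s\in[0,1]$ matters, which is harmless since the optimising scales stay in that range for small $s$ and neither $p$-AUS nor $q$-AUC$^*$ depends on the chosen threshold.

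I expect the main obstacle to be precisely the non-reflexive case: weakly null nets in $X$ no longer suffice to detect $\overline{\rho}_X$ through the norming-functional argument in the direction ``$X$ AUS $\Rightarrow X^*$ AUC$^*$'', so one must pass to $X^{**}$ and control the loss coming from the fact that a weak$^*$ cluster point of unit norming functionals may have norm strictly less than $1$; keeping all estimates uniform through this passage (and justifying that the asymptotic smoothness modulus is insensitive to bidualisation) is the technical heart. I would isolate the two net-reformulations and the monotonicity estimates as a preliminary lemma, after which (i) and (ii) follow quickly from the Hahn--Banach bridge and the Young optimisation above; in the paper itself it is of course cleanest simply to invoke \cite{DKLR}*{Corollary 2.4}.
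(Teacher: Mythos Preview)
The paper does not prove this proposition at all: it records it as a classical duality result and refers the reader to \cite{DKLR}*{Corollary~2.4}. You already anticipate this at the end of your proposal, so there is nothing to compare on the paper's side.

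As for your sketch itself, the net reformulations and the Young-type optimisation for part~(ii) are the right ingredients and match the standard route. One step deserves a flag, though: in the direction ``$X$ AUS $\Rightarrow$ $X^*$ AUC$^*$'' you propose to pass to $X^{**}$ and invoke that ``asymptotic uniform smoothness transfers between $X$ and its bidual''. That transfer is not elementary in the non-reflexive case, and the cleanest proofs of it that I know go through exactly the duality you are trying to establish, so as written this step risks circularity. The usual fix is to avoid the bidual entirely and argue directly: given a witness $x^*\in\partial B_{X^*}$ and a weak$^*$-null net $(y^*_\gamma)\subset\partial B_{X^*}$ with $\|x^*+t_0y^*_\gamma\|\to 1$, choose $x\in\partial B_X$ almost normed by $x^*$ and, for each finite-codimensional $E\subset X$, produce $y\in\partial B_E$ with $y^*_\gamma(y)$ close to~$1$ for suitable $\gamma$ (using that $(y^*_\gamma)$ is weak$^*$-null, hence eventually small on any finite set of functionals defining $E$); evaluating $x^*+t_0y^*_\gamma$ on a convex combination of $x$ and $y$ then forces $\overline{\rho}_X(\sigma,x)\gtrsim\sigma$ for a range of $\sigma$. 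This keeps the argument inside $X$ and $X^*$ and yields the Fenchel-type inequality between $\overline{\rho}_X$ and $\overline{\delta}_X^*$ from which both (i) and (ii) follow at once.
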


The next proposition is elementary.
\begin{proposition}\label{as-sequences}
For any weak$^*$-null sequence $(x^*_n)_{n=1}^\infty \subset X^*$ and for any $x^* \in X^*\setminus \set{0}$ we have
	\[
	\limsup_{n\to \infty} \norm{x^*+x^*_n} \geq \norm{x^*}\left(1+\overline{\delta}_X^*\left(\frac{\limsup_{n\to \infty} \norm{x_n^*}}{\norm{x^*}}\right)\right).
	\]
\end{proposition}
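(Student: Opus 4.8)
The goal is to show that for a weak$^*$-null sequence $(x^*_n)$ and a nonzero $x^*$, one has $\limsup_n \|x^* + x^*_n\| \geq \|x^*\|(1 + \overline{\delta}^*_X(L/\|x^*\|))$ where $L = \limsup_n \|x^*_n\|$. The natural approach is to unwind the definition of $\overline{\delta}^*_X$: it is an infimum over $x^*$ on the sphere, then a supremum over $E \in \mathrm{CoFin}^*(X^*)$, then an infimum over $y^*$ on the sphere of $E$. So for the fixed unit vector $x^*/\|x^*\|$ and any $\eta > 0$, we may pick $E \in \mathrm{CoFin}^*(X^*)$ witnessing the supremum up to $\eta$, meaning that $\|x^*/\|x^*\| + t y^*\| \geq 1 + \overline{\delta}^*_X(t) - \eta$ for \emph{every} unit $y^* \in E$. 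The plan is to feed in $t = L/\|x^*\|$ (or, to be safe, a value slightly below it) and to use a suitable tail of the sequence $(x^*_n)$, rescaled, as the vector $y^*$.

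\textbf{Key steps.} First, fix $\eta > 0$ and normalize: set $u^* = x^*/\|x^*\|$, so $u^*$ is a unit vector, and choose $E \in \mathrm{CoFin}^*(X^*)$ so that $\inf_{y^* \in \partial B_E}\|u^* + t y^*\| \geq 1 + \overline{\delta}^*_X(t) - \eta$ for the value $t$ we will specify. Second, observe that since $E$ has finite codimension and is weak$^*$ closed, it is the annihilator of a finite-dimensional subspace $F \subset X$; hence for a weak$^*$-null sequence $(x^*_n)$ we can find, for each $\delta > 0$, infinitely many $n$ with $\mathrm{dist}(x^*_n, E) < \delta$ — indeed one writes $x^*_n = e^*_n + r^*_n$ with $e^*_n \in E$ and $r^*_n$ going to $0$ because $r^*_n$ is essentially the action of $x^*_n$ on a finite basis of $F$, which tends to $0$. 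Third, pass to a subsequence realizing the $\limsup$: choose $n_k$ with $\|x^*_{n_k}\| \to L$, and simultaneously arrange $\mathrm{dist}(x^*_{n_k}, E) \to 0$; write $x^*_{n_k} = e^*_k + r^*_k$, $e^*_k \in E$, $r^*_k \to 0$, so $\|e^*_k\| \to L$ as well. Fourth, apply the $E$-inequality to the normalized vectors $e^*_k/\|e^*_k\|$ with $t = \|e^*_k\|/\|x^*\| \to L/\|x^*\|$ (using continuity/monotonicity of $\overline{\delta}^*_X$, and the homogeneity $\|u^* + \|e^*_k\|/\|x^*\| \cdot e^*_k/\|e^*_k\|\| = \|u^* + e^*_k/\|x^*\|\|$) to get $\|u^* + e^*_k/\|x^*\|\| \geq 1 + \overline{\delta}^*_X(\|e^*_k\|/\|x^*\|) - \eta$; multiply through by $\|x^*\|$ to obtain $\|x^* + e^*_k\| \geq \|x^*\|(1 + \overline{\delta}^*_X(\|e^*_k\|/\|x^*\|)) - \eta\|x^*\|$. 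Fifth, replace $e^*_k$ by $x^*_{n_k}$ at the cost of $\|r^*_k\| \to 0$, take the limit in $k$, let $\eta \to 0$, and conclude using lower semicontinuity (or at least left-continuity/monotonicity) of $\overline{\delta}^*_X$ to pass $\overline{\delta}^*_X(\|e^*_k\|/\|x^*\|)$ to $\overline{\delta}^*_X(L/\|x^*\|)$.

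\textbf{Main obstacle.} The only delicate point is the regularity of the modulus $\overline{\delta}^*_X$ needed in the last step: to get the clean inequality with $\overline{\delta}^*_X(L/\|x^*\|)$ on the right, one wants $\overline{\delta}^*_X$ to be nondecreasing and left lower semicontinuous, so that $\liminf_k \overline{\delta}^*_X(\|e^*_k\|/\|x^*\|) \geq \overline{\delta}^*_X(L/\|x^*\|)$ when $\|e^*_k\|/\|x^*\| \to L/\|x^*\|$. Monotonicity of $\overline{\delta}^*_X$ is a standard fact (it follows from a convexity-type argument on the norm), and that is enough if we are willing to first apply the argument with $t$ any fixed value strictly below $L/\|x^*\|$ and then take a supremum over such $t$. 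A secondary, more bookkeeping-type obstacle is the choice of $E$: the witnessing subspace depends on the value of $t$ we plug in, so one should fix $t < L/\|x^*\|$ first, choose $E = E(t,\eta)$, run the argument, and only afterwards let $t \uparrow L/\|x^*\|$ and $\eta \downarrow 0$. With that ordering everything goes through routinely.
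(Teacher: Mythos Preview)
The paper does not give a proof of this proposition; it simply declares it ``elementary'' and moves on. Your outline is the standard unwinding of the definition of $\overline{\delta}_X^*$ combined with the fact that a weak$^*$-null sequence is eventually close to any fixed weak$^*$-closed finite-codimensional subspace, and it is correct.

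One remark on what you call the ``main obstacle''. You worry about needing monotonicity or left lower semicontinuity of $\overline{\delta}_X^*$ in order to pass from $\overline{\delta}_X^*(\|e^*_k\|/\|x^*\|)$ to $\overline{\delta}_X^*(L/\|x^*\|)$, and your proposed fix (take $t<L/\|x^*\|$, then sup over such $t$) would indeed require left-continuity, not just monotonicity. But this is easier than you make it sound: for every unit $y^*$, the map $s\mapsto \|u^*+sy^*\|$ is $1$-Lipschitz, and hence so is $\overline{\delta}_X^*$ itself (the Lipschitz bound survives the inf--sup--inf). So you may simply fix $E$ once, for the target value $t=L/\|x^*\|$, apply the $E$-inequality at that $t$ to $y^*_k=e^*_k/\|e^*_k\|$, and absorb the error $|\,\|e^*_k\|/\|x^*\|-L/\|x^*\|\,|$ directly via the $1$-Lipschitz estimate; no separate regularity lemma for $\overline{\delta}_X^*$ is needed. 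With this simplification your steps 1--5 go through cleanly.
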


By iterating this estimate one can deduce the following property of weak$^*$-null trees in a $q$-AUC$^*$ dual space.
\begin{proposition}\label{qAUC-trees}
Let $X$ be a Banach space with a dual $q$-AUC$^*$ norm, for some $q\in [1,\infty)$. Then, there exists $c>0$ such that for any weak$^*$-null tree $(x^*(\n))_{\n\in[\N]^{\le k}}$ in $X^*$, there exists $\M \in [\N]^\omega$ such that
$$\forall \n\in [\M]^{k},\ \Big\|\sum_{\m \preceq \n}x^*(\m)\Big\|^q\ge c\sum_{\m \preceq \n}\|x^*(\m)\|^q.$$
\end{proposition}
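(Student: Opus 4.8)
The plan is to combine Proposition~\ref{as-sequences} with a standard Ramsey-type stabilization argument applied level by level to the tree. First I would prove, by induction on the height $k$, the following finitary statement: if $(x^*(\n))_{\n \in [\N]^{\le k}}$ is a weak$^*$-null tree in $X^*$, then for every $\eta > 0$ there is $\M \in [\N]^\omega$ such that for all $\n \in [\M]^k$ one has $\big\|\sum_{\m \preceq \n} x^*(\m)\big\|^q \ge (c-\eta)\sum_{\m \preceq \n}\|x^*(\m)\|^q$, where $c$ depends only on the constant in the $q$-AUC$^*$ estimate $\overline{\delta}_X^*(t) \ge C t^q$. Since the final statement only asks for some $c > 0$, it suffices to prove it with $c$ replaced by $c - \eta$ for a fixed small $\eta$, or alternatively to run the argument slightly more carefully to get a clean constant.

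The engine is Proposition~\ref{as-sequences}: for a weak$^*$-null sequence $(x^*_n)$ and $x^* \ne 0$ we have $\limsup_n \|x^* + x^*_n\| \ge \|x^*\| + \overline{\delta}_X^*(\limsup_n\|x^*_n\|/\|x^*\|)\cdot\|x^*\| \ge \|x^*\| + C\,(\limsup_n\|x^*_n\|)^q / \|x^*\|^{q-1}$, which rearranges (using the elementary inequality $(a^q + b^q)^{1/q} \le a + C' b^q/a^{q-1}$ type manipulation, or more directly that $(\|x^*\|^q + C''\|x^*_n\|^q)^{1/q}$ is bounded by the right side for a suitable $C''$) to say that the norm of the sum grows like an $\ell_q$-sum. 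Concretely one checks: if $\|u\| \ge a$ and $\limsup_n\|u + v_n\| \ge a + C b^q/a^{q-1}$ for a weak$^*$-null $(v_n)$ with $\limsup\|v_n\| \ge b$, then iterating down a chain of length $k$ and passing to a diagonal subset produces the desired $\ell_q$-lower bound with $c$ a function of $C$ and (harmlessly) of $k$ — but one must take care that $c$ does NOT degrade with $k$. The cleanest way to see $c$ is independent of $k$ is to set $S_j = \|\sum_{\m \preceq \n_{|j}} x^*(\m)\|$ and $t_j = \|x^*(\n_{|j})\|$ and show $S_{j}^q \ge S_{j-1}^q + c\, t_j^q$ at each step (up to an arbitrarily small error obtained from the $\limsup$ by passing to a subsequence), then sum the telescoping inequality. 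Here $S_j^q \ge S_{j-1}^q + c\,t_j^q$ follows from Proposition~\ref{as-sequences} applied to $x^* = \sum_{\m \preceq \n_{|j-1}} x^*(\m)$ and the weak$^*$-null sequence $(x^*(\n_{|j-1}, t))_t$, using convexity of $r \mapsto r^q$ to absorb the $\|x^*\|^{q}$-dependence: indeed $(\|x^*\| + C b^q/\|x^*\|^{q-1})^q \ge \|x^*\|^q + qC b^q$ by the mean value theorem / convexity, so $c = qC$ works uniformly.

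The Ramsey step is organized as follows. Work on level $1$ first: the sequence $(x^*(t))_{t \in \N}$ is weak$^*$-null, so after passing to an infinite $\M_0$ we may assume $\|x^*(t)\|$ converges to some value and the relevant $\limsup$s are nearly attained along $\M_0$; but since we need this simultaneously over the branching at the root we instead: pass to $\M$ such that for every $\n \in [\M]^{k}$ the partial-sum inequality holds, by a repeated application of the infinite Ramsey theorem (or just a diagonal argument) coloring finite tuples according to whether the one-step estimate $S_j^q \ge S_{j-1}^q + (c-\eta')t_j^q$ holds, which can be arranged on a tail because of the $\limsup$ in Proposition~\ref{as-sequences}. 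I expect the main obstacle, and the only genuinely delicate point, to be organizing this stabilization so that a \emph{single} $\M \in [\N]^\omega$ works for all $\n \in [\M]^k$ at once, and so that the constant $c$ does not deteriorate as $k$ grows; the per-step telescoping formulation above is precisely designed to handle both issues, since each step's loss can be made arbitrarily small and the additive constant $qC$ in $S_j^q \ge S_{j-1}^q + qC\,t_j^q$ is the same at every level and every height. Summing over $j = 1, \dots, k$ then yields $\|\sum_{\m \preceq \n} x^*(\m)\|^q = S_k^q \ge qC \sum_{j=1}^k t_j^q = qC\sum_{\m \preceq \n}\|x^*(\m)\|^q$, which is the claim with $c = qC$ (after absorbing the small errors, or by running the argument with $c$ slightly smaller than $qC$).
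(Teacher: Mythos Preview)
Your approach---iterate Proposition~\ref{as-sequences} to get a one-step telescoping estimate $S_j^q \ge S_{j-1}^q + c\,t_j^q$ and then stabilize via Ramsey/diagonal extraction so that a single $\M$ works for all branches---is exactly what the paper indicates (it does not spell out a proof but writes ``by iterating this estimate'' and cites \cite{KnaustOdellSchlumprecht1999}, \cite{Kalton2013AsymptoticStructure}, \cite{LancienPetitjeanProchazka2018}). The key observation that the per-step constant does not depend on the level, so that summing the telescope gives a $k$-independent $c$, is the right one.

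One point to tighten: the hypothesis $\overline{\delta}_X^*(t)\ge Ct^q$ is only assumed for $t\in[0,1]$, so your derivation of $\limsup_n\|x^*+x^*_n\|\ge \|x^*\|+C(\limsup_n\|x^*_n\|)^q/\|x^*\|^{q-1}$ and hence of $S_j^q\ge S_{j-1}^q+qC\,t_j^q$ is only justified when $t_j\le S_{j-1}$. When $t_j>S_{j-1}$ you need a short separate argument: combine the bound $\limsup_n\|x^*+x^*_n\|\ge (1+C)\|x^*\|$ (from monotonicity of $\overline{\delta}_X^*$ at $t=1$) with the triangle inequality $\limsup_n\|x^*+x^*_n\|\ge \limsup_n\|x^*_n\|-\|x^*\|$; splitting according to whether $S_{j-1}\ge t_j/(2+C)$ or not yields $S_j^q\ge S_{j-1}^q+c'\,t_j^q$ with $c'=((1+C)^q-1)/(2+C)^q$. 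With $c=\min(qC,c')$ the telescoping goes through uniformly.
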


This now standard fact could initially be found in \cite{KnaustOdellSchlumprecht1999}. See also  \cite{Kalton2013AsymptoticStructure} and  \cite{LancienPetitjeanProchazka2018}*{Lemma 3.6 and Lemma 3.7}.

We conclude this section by recalling the fundamental renorming result for spaces with Szlenk index equal to $\omega$. The result is due to H. Knaust, E.~Odell and Th. Schlumprecht \cite{KnaustOdellSchlumprecht1999} in the separable case and M. Raja \cite{Raja2013} in the non separable setting. The precise quantitative version can be found in \cite{GodefroyKaltonLancien2001}.

\begin{theorem}\label{SzlenkAUS} Let $X$ be a Banach space such that $\mathrm{Sz}(X)=\omega$. Then there exists $p\in (1,\infty)$ such that $X$ admits an equivalent $p$-AUS norm.
\end{theorem}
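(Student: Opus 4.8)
This is the classical renorming theorem of Knaust--Odell--Schlumprecht \cite{KnaustOdellSchlumprecht1999} (with Raja's extension \cite{Raja2013} to the non-separable case and the quantitative refinement of Godefroy--Kalton--Lancien \cite{GodefroyKaltonLancien2001}), and the plan is to run its proof along the dual route. By Proposition~\ref{duality}(ii) it suffices to produce, for some $q\in(1,\infty)$, an equivalent \emph{dual} norm on $X^*$ that is $q$-AUC$^*$; the predual norm on $X$ is then $p$-AUS for the conjugate exponent $p=q/(q-1)$, and $p\in(1,\infty)$ precisely because $q>1$. So the task reduces to renorming $X^*$ through its weak$^*$ structure.

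The first step I would take is to upgrade the hypothesis $\mathrm{Sz}(X)=\omega$ --- i.e.\ $\mathrm{Sz}(X,\eps)<\omega$ for every $\eps>0$ --- to a power-type estimate: there exist $C\geq 1$ and $r\geq 0$ with
\[
\mathrm{Sz}(X,\eps)\ \leq\ C\eps^{-r}\qquad\text{for all }\eps\in(0,1).
\]
This follows from the submultiplicativity of the Szlenk index, $\mathrm{Sz}(X,\eps\eta)\leq \mathrm{Sz}(X,\eps)\cdot\mathrm{Sz}(X,\eta)$ --- itself a consequence of the convexity of $B_{X^*}$ and the homogeneity $s^m_\eps(\lambda K)=\lambda\, s^m_{\eps/\lambda}(K)$ of the derivations --- together with the trivial observation that a non-increasing submultiplicative function which is finite at every scale is dominated by a power of $1/\eps$; see \cite{GodefroyKaltonLancien2001}. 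I then fix, once and for all, an exponent $q>\max\{1,r\}$, so that also $\mathrm{Sz}(X,\eps)\leq C\eps^{-q}$ on $(0,1)$.

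The second, and main, step is the renorming itself, built from the Szlenk derivation hierarchy of $B_{X^*}$. The idea is as follows. For each $n\geq 0$ the sets $s^j_{2^{-n}}(B_{X^*})$, $j=0,1,2,\dots$, form a decreasing chain of weak$^*$-compact sets that becomes empty once $j\gtrsim C 2^{nq}$, so a suitably chosen intermediate derivative $s^{j(n)}_{2^{-n}}(B_{X^*})$ is still nonempty but already markedly thinner than $B_{X^*}$. Taking weak$^*$-closed convex symmetric hulls of appropriate dilates of these intermediate derivatives, over the geometric scale $n=0,1,2,\dots$, and combining their Minkowski gauges in an $\ell_q$-like fashion --- with the dilation factors and the depths $j(n)$ balanced against the power-type bound so that the outcome stays equivalent to $\|\cdot\|$ --- produces an equivalent norm on $X^*$; being a countable combination of gauges of weak$^*$-closed convex sets it is weak$^*$-lower semicontinuous, hence a dual norm. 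The purpose of the balancing is that a weak$^*$-null perturbation of size $t$ must eventually escape the intermediate derivative at every scale $2^{-n}\gtrsim t$ --- which one checks by iterating an estimate of the type recorded in Proposition~\ref{as-sequences} --- and so contributes a definite increment to the gauges at those $\sim\log(1/t)$ scales; the $\ell_q$-combination turns this into a weak$^*$-AUC modulus of the new norm that is $\gtrsim t^q$. Plugging this $q$-AUC$^*$ dual norm into Proposition~\ref{duality}(ii) delivers the $p$-AUS norm on $X$ with $p$ the conjugate of $q$.

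The hard part is entirely in the renorming of the second step: one has to choose the geometric scales, the derivation depths $j(n)$ and the exponent $q$ so that the new norm is \emph{simultaneously} equivalent to $\|\cdot\|$, weak$^*$-lower semicontinuous, and of genuine power type as an AUC$^*$ norm --- and, in the non-separable case treated by Raja, this must be arranged without any appeal to sequences or trees, which is where nearly all of the technical difficulty lies.
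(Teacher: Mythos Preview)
The paper does not prove this theorem at all: it is stated as a background result and attributed to Knaust--Odell--Schlumprecht (separable case), Raja (non-separable case), and Godefroy--Kalton--Lancien (quantitative version), with no argument given. Your proposal correctly identifies it as exactly this classical renorming theorem and cites the same sources, so at the level of what the paper itself contains, you are already aligned.

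Your sketch goes further and outlines the actual mechanism of the proof. The two main ingredients you isolate --- the submultiplicativity of $\eps\mapsto\mathrm{Sz}(X,\eps)$ yielding a power-type bound $\mathrm{Sz}(X,\eps)\leq C\eps^{-r}$, and a renorming built from gauges of (convex hulls of) iterated Szlenk derivations combined in an $\ell_q$ fashion --- are indeed the backbone of the argument in the cited papers, and your reduction via Proposition~\ref{duality}(ii) to producing a $q$-AUC$^*$ dual norm is the right framing. As a proof \emph{sketch} this is accurate and well-oriented; as a self-contained proof it is of course not, since the ``balancing'' of scales, depths and dilations that you allude to is precisely where all the work lies, and your description of it remains at the heuristic level. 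Given that the paper treats the theorem as a black box, this is entirely appropriate.
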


\subsection{General properties of Lipschitz maps into a dual space}\label{SubsectionGenProp}
We finish this preliminaries section by  gathering a few decompositon properties of Lipschitz maps from $([\N]^k,d_{\K})$ into a dual Banach space $X^*$ which will be heavily used throughout these notes. We start with an elementary separable reduction.

\begin{proposition}\label{toutcon} Let $X$ be a Banach space and  $f \colon [\N]^k \to X^*$ be a map. Then, there exists a separable subspace $Y$ of $X$ such that the closed linear span of $f([\N]^k)$ isometrically embeds into $Y^*$.
\end{proposition}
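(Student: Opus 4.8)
The statement asks: given a map $f\colon [\N]^k \to X^*$, find a separable subspace $Y$ of $X$ such that $\overline{\span}\, f([\N]^k)$ isometrically embeds into $Y^*$. The natural candidate for $Y$ is the closed linear span of a countable norming set for $\overline{\span}\, f([\N]^k)$. The plan is as follows.

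First, note that $[\N]^k$ is countable, so $A := f([\N]^k)$ is a countable subset of $X^*$, and $Z := \overline{\span}\, A$ is a separable subspace of $X^*$. For each pair $a \neq b$ in $A$ choose $x_{a,b} \in B_X$ with $\la x_{a,b}, a-b\ra$ close to $\|a-b\|$; more systematically, pick a countable dense subset $D$ of $Z$ and, for each $z \in D$ and each $n \in \N$, an element $x_{z,n} \in B_X$ with $\la x_{z,n}, z\ra \geq \|z\| - 1/n$. Let $Y$ be the closed linear span in $X$ of the countable family $\{x_{z,n} : z \in D, n \in \N\}$; then $Y$ is a separable subspace of $X$. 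Let $R\colon X^* \to Y^*$ be the restriction map $z^* \mapsto z^*\restricted_Y$, which is linear and has norm $\leq 1$.

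Next I would check that $R$ restricted to $Z$ is an isometry onto its image. It suffices to show $\|R z^*\|_{Y^*} = \|z^*\|_{X^*}$ for all $z^* \in Z$. The inequality $\leq$ is clear. For $\geq$, fix $z^* \in Z$ and $\eps > 0$; choose $z \in D$ with $\|z - z^*\| < \eps$, then choose $n$ with $1/n < \eps$, so that $x_{z,n} \in B_Y$ satisfies $\la x_{z,n}, z^*\ra \geq \la x_{z,n}, z\ra - \eps \geq \|z\| - 1/n - \eps \geq \|z^*\| - 3\eps$. Hence $\|R z^*\|_{Y^*} \geq \|z^*\| - 3\eps$, and letting $\eps \to 0$ gives the claim. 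Since $A \subset Z$, the map $R\restricted_{\overline{\span}\, A}$ is the desired isometric embedding of $\overline{\span}\, f([\N]^k)$ into $Y^*$.

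There is no serious obstacle here; the only point requiring a little care is the bookkeeping in choosing the countable norming family $\{x_{z,n}\}$ so that it genuinely norms the whole space $Z$ (not just $A$), which is handled by passing through a countable dense subset $D$ of $Z$ and using a standard $3\eps$ argument. (One can also phrase this via the fact that a separable subspace of $X^*$ always admits a separable \emph{norming} subspace of $X$, which is exactly what the construction above produces.) Note that the statement is phrased for $k$ fixed, but the identical argument works verbatim for any countable domain, e.g.\ $[\N]^{<\omega}$, which is what is actually needed later.
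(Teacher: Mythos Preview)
Your proof is correct and follows the same idea as the paper's: since $[\N]^k$ is countable, $Z=\overline{\span}\,f([\N]^k)$ is separable, and one then takes $Y$ to be any separable subspace of $X$ that norms $Z$, so that restriction to $Y$ is an isometry from $Z$ into $Y^*$. The paper simply asserts the existence of such a norming $Y$, whereas you spell out the construction via a countable dense set and the $3\eps$ verification; the content is the same.
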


\begin{proof} Since $[\N]^k$ is countable, the closed linear span of $f([\N]^k)$ is a separable subspace of $X^*$; let us call it Z. Therefore, there exists a separable subspace $Y$ of $X$ such that
$$\forall x^* \in Z,\ \ \|x^*\|_{X^*}=\sup_{y\in B_Y} |x^*(y)|.$$
This concludes our proof.
\end{proof}

The next proposition is   \cite{LancienPetitjeanProchazka2018}*{Proposition 2.8}. As it is mentioned in \cite{LancienPetitjeanProchazka2018}, its proof follows the ideas of the proof of  \cite{BaudierLancienMotakisSchlumprecht2018}*{Lemma 4.1}. As usual $\Lip(f)$ denotes the best Lipschitz constant of a Lipschitz map $f$ between metric spaces; note that if $f:([\N]^k,d_{\K})\to Y$ with $Y$ being a normed vector space then $\Lip(f) = \omega_f(1)$.

\begin{proposition}\label{w*nulltree} Let $X$ be a separable Banach space,  $k\in\N$,  and $f:([\N]^k,d_{\K})\to X^*$  a Lipschitz map. Then there exist $\M \in [\N]^\omega$ and a weak$^*$-null tree $(x^*(\m))_{\m\in[\M]^{\le k}}$ in $X^*$ with $\|x^*(\m)\| \leq \Lip(f)$ for all $\m\in [\M]^{\leq k}\setminus\{\emptyset\}$ and so that
$$\forall \n \in [\M]^k,\ f(\n)=x^*_\emptyset + \sum_{i=1}^k x^*(n_1,\ldots,n_i)=\sum_{\m \preceq \n} x^*(\m).$$
\end{proposition}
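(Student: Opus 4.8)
The plan is to prove Proposition~\ref{w*nulltree} by a Ramsey-type stabilization argument, building the tree level by level. First I would fix the Lipschitz map $f:([\N]^k,d_\K)\to X^*$ and observe that, by weak$^*$ compactness of $\Lip(f)\cdot B_{X^*}$ together with the metrizability of that ball in the weak$^*$ topology (which holds since $X$ is separable), every bounded sequence in $X^*$ has a weak$^*$-convergent subsequence. The idea is to extract, for each node, a weak$^*$-limit along an infinite subset of coordinates, and to define $x^*(\m)$ as a successive difference so that the telescoping identity $f(\n)=\sum_{\m\preceq\n}x^*(\m)$ holds on a final full subtree.

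The key steps, in order: (1) For the empty node, note that for fixed $n_2<\dots<n_k$ the sequence $(f(t,n_2,\dots,n_k))_{t}$ lives in $\Lip(f)\cdot B_{X^*}$; by a diagonal argument combined with Ramsey's theorem one passes to $\M_0\in[\N]^\omega$ along which $f(t,n_2,\dots,n_k)$ weak$^*$-converges as $t\to\infty$ in $\M_0$, and the limit does not depend on the tail $(n_2,\dots,n_k)$ chosen from $\M_0$ — this last independence is exactly where one uses that consecutive elements of $[\N]^k$ are $d_\K$-close (interlacing), so changing the tail costs a bounded amount in $f$-distance and in the limit contributes nothing since we only care about the weak$^*$-limit in $t$; one must be a little careful and instead stabilize not a single limit but work coordinate-by-coordinate. (2) Set $x^*_\emptyset$ to be this limit, and then for the first coordinate define $x^*(n_1):=f(n_1,\text{tail})-x^*_\emptyset$ in the appropriate limiting sense; the point is that along a suitable subset this difference depends only on $n_1$. (3) Iterate: having stabilized the first $i$ coordinates on some $\M_i\in[\N]^\omega$, one stabilizes the weak$^*$-limit of $f(n_1,\dots,n_i,t,\text{tail})$ as $t\to\infty$ and sets $x^*(n_1,\dots,n_{i+1})$ equal to the increment; the bound $\|x^*(\m)\|\le\Lip(f)$ for $\m\ne\emptyset$ comes from the fact that each increment is a weak$^*$-limit of differences $f(\n)-f(\n')$ where $\n,\n'$ are $d_\K$-adjacent (distance $1$), hence of norm at most $\Lip(f)$, and weak$^*$-limits do not increase norm. (4) The weak$^*$-null property: for a fixed node $(n_1,\dots,n_j)$ with $j\le k-1$, the sequence $(x^*(n_1,\dots,n_j,t))_t = (\text{increments})_t$ is weak$^*$-null because each such increment was defined as $f(n_1,\dots,n_j,t,\dots) - (\text{partial sum through level } j)$ and the stabilized weak$^*$-limit of these as $t\to\infty$ is $0$ by construction (we subtracted exactly that limit at the previous stage). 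A final diagonalization over $i=0,\dots,k$ produces a single $\M\in[\N]^\omega$ on which the full telescoping identity and all the tree properties hold simultaneously; relabeling $[\M]^\omega$ as $[\N]^\omega$ if desired.

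The main obstacle I anticipate is step (1)/(3): making precise the claim that the weak$^*$-limit one extracts genuinely depends only on the already-fixed initial coordinates and not on the choice of the deeper tail. The clean way to handle this is to use the Ramsey theorem (or repeated application of Ramsey for finite colorings composed with a metric $\eps$-net of the weak$^*$-metrizable ball) to stabilize, for each $\eps$ and each weak$^*$-basic open set from a countable basis, whether $f(\n)$ eventually lies in that set; passing to a sufficiently thin infinite set makes the relevant weak$^*$-limits well-defined and tail-independent. One then invokes the interlacing structure — precisely the formula $d_\K(\n,\m)=\sup_S\big||\n\cap S|-|\m\cap S|\big|$ from Proposition~\ref{p:KaltonDistanceFormula} — to see that if $\n$ and $\n'$ agree in their first $i$ coordinates and both lie in the thinned set with large gaps, then $d_\K(\n,\n')$ is controlled, forcing the increments to agree in the limit. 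I would carry this out carefully but would not belabor the routine Ramsey bookkeeping, since, as noted in the excerpt, the argument parallels \cite{BaudierLancienMotakisSchlumprecht2018}*{Lemma 4.1} and \cite{LancienPetitjeanProchazka2018}*{Proposition 2.8}.
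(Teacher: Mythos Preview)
The paper does not give its own proof of Proposition~\ref{w*nulltree}; it merely quotes it from \cite{LancienPetitjeanProchazka2018}*{Proposition 2.8} and remarks that the argument follows \cite{BaudierLancienMotakisSchlumprecht2018}*{Lemma 4.1}. Your overall plan is precisely that standard approach: exploit weak$^*$ sequential compactness of bounded sets in $X^*$ (from separability of $X$) together with a Ramsey stabilization to produce level maps $g_i:[\M]^i\to X^*$, set $x^*(\m)=g_{|\m|}(\m)-g_{|\m|-1}(\m^-)$, and read off the telescoping identity, the weak$^*$-null property, and the norm bound by weak$^*$ lower semicontinuity applied to differences $f(\n)-f(\n')$ with $d_\K(\n,\n')=1$.

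There is, however, a concrete indexing error in your step~(1) that would make the argument fail as written. You fix $n_2<\dots<n_k$ and send the \emph{first} coordinate $t\to\infty$; but in $[\N]^k$ one has $t<n_2$, so $t$ ranges over a finite set and no limit exists. The iteration in the cited references runs the other way: at level $k-1$ fix $n_1<\dots<n_{k-1}$ and let the \emph{last} coordinate $t>n_{k-1}$ tend to infinity to define $g_{k-1}(n_1,\dots,n_{k-1})=w^*\text{-}\lim_t f(n_1,\dots,n_{k-1},t)$; then repeat down to $g_0=x^*_\emptyset$. Your step~(3) is closer to this, but still carries an undefined ``tail'' beyond the varying coordinate; working from level $k$ down to $0$ removes that tail entirely and makes the independence claim automatic rather than something to argue via the metric. (A minor side point: $f([\N]^k)$ need not lie in $\Lip(f)\,B_{X^*}$, only in a translate of $k\Lip(f)\,B_{X^*}$ since $\diam([\N]^k,d_\K)=k$; this is harmless.)
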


Next, we will extract infinite subsets of $\M$ in order to simplify further the structure of $f$ restricted to the corresponding graph. So assume, for the sequel of this subsection, that $X$ is a separable Banach space, $f:([\N]^k,d_{\K})\to X^*$ is Lipschitz and $(x^*(\m))_{\m\in[\M]^{\le k}}$ is as in the conclusion of Proposition~\ref{w*nulltree}.

\begin{lemma}\label{Ramsey} Fix $\eps >0$. Then there exits $\M_1 \in [\M]^\omega$ such that for all $i\in\{1,\ldots,k\}$ there exists $K_i\in [0,\Lip(f)]$ satisfying
$$\forall (n_1,\ldots,n_i) \in [\M_1]^i,\ K_i\leq \|x^*(n_1,\dots,n_i)\|\le K_i+\eps.$$
\end{lemma}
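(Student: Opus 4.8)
The plan is to apply an infinite Ramsey theorem (i.e., Galvin--Prikry, or just the classical Ramsey theorem iterated, since we need only a coloring of $[\M]^i$ for each fixed $i$) successively for $i=1,\ldots,k$. First fix $i\in\{1,\ldots,k\}$. The function $\n\mapsto\|x^*(n_1,\dots,n_i)\|$ takes values in the compact interval $[0,\Lip(f)]$, by the norm bound in Proposition~\ref{w*nulltree}. Cover $[0,\Lip(f)]$ by finitely many closed intervals $J_1,\ldots,J_N$ each of length at most $\eps$ (one can take $N=\lceil \Lip(f)/\eps\rceil$ and $J_r=[(r-1)\eps,r\eps]\cap[0,\Lip(f)]$), and color each $\n\in[\M]^i$ by the least index $r$ such that $\|x^*(n_1,\dots,n_i)\|\in J_r$. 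This is a finite coloring of $[\M]^i$, so by Ramsey's theorem there is an infinite $\M^{(i)}\in[\M]^\omega$ on which the color is constant, say equal to $r_i$; setting $K_i$ to be the left endpoint of $J_{r_i}$, we get $K_i\le\|x^*(n_1,\dots,n_i)\|\le K_i+\eps$ for all $(n_1,\dots,n_i)\in[\M^{(i)}]^i$, and $K_i\in[0,\Lip(f)]$.

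The only point requiring a little care is that we must obtain a \emph{single} infinite set $\M_1$ that works simultaneously for all $i\in\{1,\ldots,k\}$. The clean way is to build a decreasing chain $\M\supseteq\M^{(1)}\supseteq\M^{(2)}\supseteq\cdots\supseteq\M^{(k)}=:\M_1$: having produced $\M^{(i-1)}$, apply the argument of the previous paragraph with $\M$ replaced by $\M^{(i-1)}$ to extract $\M^{(i)}\in[\M^{(i-1)}]^\omega$ stabilizing the $i$-th norm function. Since $k$ is finite this is just a finite iteration, and because each $[\M^{(i)}]^j$ with $j\le i$ is contained in $[\M^{(i-1)}]^j$, the stabilization obtained at an earlier stage persists: on $\M_1$ we have $K_i\le\|x^*(n_1,\dots,n_i)\|\le K_i+\eps$ for every $i\in\{1,\ldots,k\}$ and every $(n_1,\dots,n_i)\in[\M_1]^i$. (Here we use that $[\M_1]^i\subseteq[\M^{(i)}]^i$.)

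There is no real obstacle here; the lemma is a routine Ramsey stabilization. The only thing to get right is the bookkeeping of the nested extractions and the observation that the distance formula of Proposition~\ref{p:KaltonDistanceFormula} makes $x^*$ well defined on $[\M_1]^{\le k}$ as a metric subspace, so no compatibility issue arises when restricting. I would state the finite coloring, invoke Ramsey, and note the decreasing chain; the whole argument is two or three lines once set up.
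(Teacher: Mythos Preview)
Your proposal is correct and is exactly the argument the paper has in mind: the paper's own proof is the single sentence ``This is a direct consequence of Ramsey's theorem and the compactness of $[0,\Lip(f)]$,'' and your finite covering of $[0,\Lip(f)]$ by $\eps$-intervals followed by a nested extraction over $i=1,\ldots,k$ is precisely the unpacking of that sentence. (The remark about Proposition~\ref{p:KaltonDistanceFormula} is unnecessary---the tree $(x^*(\m))_{\m\in[\M]^{\le k}}$ is already defined on all of $[\M]^{\le k}$ by Proposition~\ref{w*nulltree}, so restricting to $[\M_1]^{\le k}$ is trivial---but this does not affect correctness.)
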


\begin{proof} This a direct consequence of Ramsey's theorem and the compactness of $[0,\Lip(f)]$.
\end{proof}
Then we further extract in order to separate interlacing sequences. More precisely we show the following.
\begin{lemma}\label{Ramsey2}
There exists $\M_2 \in [\M_1]^\omega$ so that if $\M_2$ is enumerated as $\M_2=\{l_1<\cdots <l_n<\cdots\}$, then, for every $i \in \{1, \ldots , k\}$:
$$\forall (n_1,\ldots,n_i) \in [\N]^i,\quad  \|x^*(l_{2n_1}, \ldots ,l_{2n_i}) - x^*(l_{2n_1+1}, \ldots ,l_{2n_i+1})\| \geq \frac{K_i}{2}.$$
\end{lemma}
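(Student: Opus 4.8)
The plan is to extract $\M_2$ from $\M_1$ by a single application of Ramsey's theorem, but applied to the values of the tree vectors along \emph{two interlaced branches} rather than one. First I would recall the structure already in hand: by Lemma~\ref{Ramsey}, along any branch indexed in $[\M_1]^i$ the norm $\|x^*(n_1,\dots,n_i)\|$ lies in $[K_i,K_i+\eps]$; by Proposition~\ref{w*nulltree} the tree is weak$^*$-null, which is the key point that will let us \emph{split} two branches that share no coordinates. Concretely, for $\m \in [\M_1]^i$ fixed and any sequence $t_1 < t_2 < \cdots$ in $\M_1$ with $t_1$ large, the vectors $x^*(\m_1,\dots,\m_{i-1},t_j)$ are weak$^*$-null in $j$ (more precisely, the relevant increments are), while the vector $x^*(\n)$ for a disjoint branch $\n$ is fixed; so for a far-enough interlaced pair the two branch-vectors are almost ``orthogonal'' in the sense that their difference has norm essentially $\max$ of the two norms, hence at least $K_i$ up to an $\eps$-error. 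Shrinking $\eps$ (which costs nothing, as $\eps$ was arbitrary in Lemma~\ref{Ramsey}) one gets the bound $\ge K_i/2$ comfortably.

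The execution I have in mind is as follows. Enumerate $\M_1 = \{p_1 < p_2 < \cdots\}$ and work with pairs $(p_{2n}, p_{2n+1})$ so that the two interlaced tuples $(l_{2n_1},\dots,l_{2n_i})$ and $(l_{2n_1+1},\dots,l_{2n_i+1})$ genuinely interlace coordinate-by-coordinate. For each $i$, I would define a finite colouring on $i$-tuples (or rather on a suitable finite initial data plus a ``large'' tail) that records, up to precision $\eps$, whether $\|x^*(l_{2n_1},\dots,l_{2n_i}) - x^*(l_{2n_1+1},\dots,l_{2n_i+1})\|$ is $\ge K_i/2$ or not; Ramsey's theorem then yields a monochromatic infinite subset. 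The content is to rule out the ``bad'' colour. For that, fix the branch $\m = (l_{2n_1},\dots,l_{2n_i})$; using Proposition~\ref{w*nulltree}, write $x^*(\m) = \sum_{j\le i} y_j$ and $x^*(\m') = \sum_{j\le i} y'_j$ where $\m'$ is the odd-shifted interlaced branch, with $y_j, y'_j$ the tree increments at level $j$. Since the two branches share no nodes, and since along each level the increment sequences are weak$^*$-null, by passing to a further subsequence (which can be absorbed into the single Ramsey extraction, or done by a preliminary diagonal argument) one makes $x^*(\m')$ so weak$^*$-small against any fixed finitely many functionals that $\|x^*(\m) - x^*(\m')\|$ is within $\eps$ of $\|x^*(\m)\| \ge K_i$; this uses that the norm is weak$^*$-lower semicontinuous, i.e.\ $\|x^*(\m) - x^*(\m')\| \ge \langle x^*(\m) - x^*(\m'), y\rangle$ for a norming $y$, and $\langle x^*(\m'), y\rangle$ is made small. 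Taking $\eps < K_i/2$ forces the good colour. Running this for each $i = 1,\dots,k$ and intersecting the $k$ resulting infinite sets gives $\M_2$.

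A cleaner way to organise the weak$^*$-smallness, and the route I would actually write up, is to do the extraction level by level: choose $l_1 < l_2 < \cdots$ in $\M_1$ recursively so that at stage $2n$ and $2n+1$ the newly available increments are weak$^*$-small against a fixed dense sequence in $B_X$ witnessing the norms of the (countably many) branch-vectors already constructed; this is the standard diagonal ``gliding hump'' argument that one uses to pass from a weak$^*$-null tree to one where interlacing branches are asymptotically disjointly supported. Then the estimate $\|x^*(\m) - x^*(\m')\| \ge K_i/2$ is immediate from weak$^*$-lower semicontinuity plus the norm lower bound on one of the two branches. The main obstacle — and the only place requiring care — is bookkeeping: there are countably many branch-vectors $x^*(\m)$ for $\m \in [\M_1]^{\le k}$, and one must interleave the choice of the norming functionals for all of them with the recursive extraction so that a single $\M_2$ works for every $i$ simultaneously and for every pair $(n_1,\dots,n_i)$; this is a routine but slightly fiddly diagonalisation, and Ramsey's theorem (already invoked in Lemma~\ref{Ramsey}) is what guarantees the resulting constants are uniform over all tuples.
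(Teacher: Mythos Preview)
Your ingredients are right (weak$^*$-lower semicontinuity of the dual norm plus a Ramsey argument), but your first route has a gap and the second, while workable, is far more laborious than the paper's argument.

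The gap in the Ramsey-on-$i$-tuples approach: once you enumerate $\M_1=\{p_1<p_2<\cdots\}$ and colour $(n_1,\ldots,n_i)\in[\N]^i$ according to whether $\|x^*(p_{2n_1},\ldots,p_{2n_i})-x^*(p_{2n_1+1},\ldots,p_{2n_i+1})\|\ge K_i/2$, the two branches are tied together through the last index $n_i$: as $n_i\to\infty$, \emph{both} last coordinates $p_{2n_i}$ and $p_{2n_i+1}$ escape, so both tree vectors tend to $0$ weak$^*$ simultaneously and you cannot hold one fixed while making the other small against a norming element of $B_X$. There is no a priori reason the difference should be large for some $n_i$ in a given infinite subset (for instance the level-$i$ tree vector could depend only on its last coordinate and vary slowly), so the bad colour cannot be excluded this way. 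The paper sidesteps this by colouring $[\M_1]^{2i}$ instead: for $\m=(m_1,\ldots,m_{2i})$ set $\m_{odd}=(m_1,m_3,\ldots,m_{2i-1})$, $\m_{even}=(m_2,m_4,\ldots,m_{2i})$, and colour by whether $\|x^*(\m_{odd})-x^*(\m_{even})\|\ge K_i/2$. Now the last coordinate $m_{2i}$ belongs only to $\m_{even}$; in any infinite subset one fixes $m_1<\ldots<m_{2i-1}$ and sends $m_{2i}\to\infty$, so $x^*(\m_{odd})$ stays fixed, $x^*(\m_{even})\to 0$ weak$^*$, and $\liminf\|x^*(\m_{odd})-x^*(\m_{even})\|\ge \|x^*(\m_{odd})\|\ge K_i$. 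This excludes the bad colour in one line, and after Ramsey gives $\M_2$ the relabelling is trivial: $(l_{2n_1},l_{2n_1+1},\ldots,l_{2n_i},l_{2n_i+1})\in[\M_2]^{2i}$ has precisely the required odd and even parts.

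Your recursive diagonal extraction would indeed work (at odd step $2N+1$ the finitely many even-indexed branch vectors $x^*(l_{2n_1},\ldots,l_{2N})$ are already fixed, you pick norming $y\in B_X$ for each, and choose $l_{2N+1}$ far enough out that every relevant odd-indexed branch vector is weak$^*$-small at those $y$'s), but this is a page of bookkeeping where the paper needs five lines. One small correction: in the setup of Proposition~\ref{w*nulltree}, $x^*(\m)$ is already the single tree increment at the node $\m$, not a sum along the branch (the sum is $f(\m)$), so your decomposition $x^*(\m)=\sum_{j\le i}y_j$ is neither needed nor correct.
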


\begin{proof} Let us fix $i \in \{1, \ldots , k\}$. For $\m=(m_1,\ldots,m_{2i})\in [\M_1]^{2i}$, we denote $\m_{odd}=(m_1,m_3,\ldots,m_{2i-1})$ and $\m_{even}=(m_2,m_4,\ldots,m_{2i})$. It follows from  the weak$^*$-lower semi-continuity of $\|\cdot\|_{X^*}$ that for all $\M_2 \in [\M_1]^\omega$ there exists $\m \in [\M_2]^{2i}$ such that $\|x^*(\m_{odd})-x^*(\m_{even})\|\ge \frac{K_i}{2}$. Then, using Ramsey's theorem successively for each $i \in \{1, \ldots , k\}$, we get that there exists $\M_2 \in [\M_1]^\omega$ so that for every $i \in \{1, \ldots , k\}$ and every $\m \in [\M_2]^{2i}$, $\|x^*(\m_{odd})-x^*(\m_{even})\|\ge \frac{K_i}{2}$. Note now that if $(n_1,\ldots,n_i)\in [\N]^i$ and $\m=(l_{2n_1},\ldots,l_{2n_i+1}) \in [\M_2]^{2i}$, then $\m_{odd}=(l_{2n_1}, \ldots ,l_{2n_i})$ and $\m_{even}=(l_{2n_1+1}, \ldots ,l_{2n_i+1})$.      This finishes the proof.
\end{proof}

Then we set $y^*_{\emptyset}=0$ and for every $\n= (n_1,\ldots,n_i) \in [\N]^{\leq k}\setminus \{\emptyset\}$, we let
$$y^*(\n)  = x^*(l_{2n_1}, \ldots ,l_{2n_i}) - x^*(l_{2n_1+1}, \ldots ,l_{2n_i+1}).$$
We have that for every $\n \in \Nk$:
\[
 \begin{aligned}
  \Big\| \sum_{i=1}^{n} y^*(n_1,\ldots ,n_i) \Big\|&=
  \|f(l_{2n_1}, \ldots ,l_{2n_k}) - f(l_{2n_1+1}, \ldots ,l_{2n_k+1})\|\\
  &\leq \Lip(f).
  \end{aligned}
\]
Thus, we can build a weak$^*$-continuous tree $(z^*(\n))_{\n\in [\N]^{\leq k}}$ in $\Lip(f)B_{X^*}$ as follows:
$$\forall \n\in [\N]^{\leq k},\ \ z^*(\n)=\sum_{\m \preceq \n} y^*(\n).$$

We now summarize all these reductions in the following proposition.

\begin{proposition}\label{summarize} Let $X$ be a separable Banach space,  $k\in\N$,  $\eps>0$ and $f:([\N]^k,d_{\K})\to X^*$  a Lipschitz map. Then there exist $\M \in [\N]^\omega$, a weak$^*$-null tree $(x^*(\m))_{\m\in[\M]^{\le k}}$ in $X^*$ and constants $K_1,\ldots,K_k$ in $[0,\Lip(f)]$ such that
\begin{enumerate}[(i)]
\item For all $\m\in [\M]^{\leq k}\setminus\{\emptyset\}$, $\|x^*(\m)\| \leq \Lip(f)$.
\item For all $\n \in [\M]^k$, $f(\n)=\sum_{\m \preceq \n} x^*(\m).$
\item For all $i\in\{1,\ldots,k\}$ and all $(n_1,\ldots,n_i) \in [\M]^i$,
$$K_i\leq \|x^*(n_1,\dots,n_i)\|\le K_i+\eps.$$
\item Denote $\M=\{l_1,\ldots,l_n,\cdots\}$ with $l_1<\cdots <l_n<\cdots$, $y^*_{\emptyset}=z^*_{\emptyset}=0$ and,
for $\n= (n_1,\ldots,n_i) \in [\N]^{\leq k}\setminus \{\emptyset\}$,
$$y^*(\n)  = x^*(l_{2n_1}, \ldots ,l_{2n_i}) - x^*(l_{2n_1+1}, \ldots ,l_{2n_i+1})$$
and
$$z^*(\n)=\sum_{\m \preceq \n} y^*(\n).$$
Then $(z^*(\n))_{\n\in [\N]^{\leq k}}$ is a weak$^*$-continuous tree in $\Lip(f)B_{X^*}$ such that for every $i \in \{1, \ldots , k\}$ and every $(n_1,\ldots,n_i) \in [\N]^i$,
$$\|y^*(n_1,\dots,n_i)\| \geq \frac{K_i}{2}.$$
\end{enumerate}
\end{proposition}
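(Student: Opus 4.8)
The statement is a compilation of the reductions already established in this subsection, so the plan is to perform three nested extractions of infinite subsets and then record the resulting structure. Concretely, I would first apply Proposition~\ref{w*nulltree} to $f$, which produces an infinite set $\M_0\in[\N]^\omega$ and a weak$^*$-null tree $(x^*(\m))_{\m\in[\M_0]^{\le k}}$ satisfying (i) and the decomposition (ii) over $[\M_0]^k$. Then, inside $\M_0$, I apply Lemma~\ref{Ramsey} with the prescribed $\eps$ to obtain $\M_1\in[\M_0]^\omega$ and constants $K_1,\dots,K_k\in[0,\Lip(f)]$ for which (iii) holds over $[\M_1]^i$. Finally I apply Lemma~\ref{Ramsey2} inside $\M_1$ to obtain $\M_2\in[\M_1]^\omega$ realizing the interlacing separation $\|x^*(l_{2n_1},\dots,l_{2n_i})-x^*(l_{2n_1+1},\dots,l_{2n_i+1})\|\ge K_i/2$, where $\M_2=\{l_1<l_2<\cdots\}$, and I set $\M:=\M_2$.

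The next point to check is that everything survives the passage to the smaller set $\M$. A full subtree of a weak$^*$-null tree is again weak$^*$-null, so $(x^*(\m))_{\m\in[\M]^{\le k}}$ remains weak$^*$-null; the norm estimates in (i) and (iii) concern individual nodes and are inherited verbatim; and, by Proposition~\ref{p:KaltonDistanceFormula}, $\dK$ is computed by the same formula independently of the ambient infinite set, so $[\M]^k$ is an isometric metric subspace of $[\M_0]^k$ and the identity $f(\n)=\sum_{\m\preceq\n}x^*(\m)$ persists for all $\n\in[\M]^k$; this gives (ii).

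With $\M$ fixed, I then define $y^*$ and $z^*$ by the formulas in (iv). The bound $\|y^*(n_1,\dots,n_i)\|\ge K_i/2$ is precisely the content of Lemma~\ref{Ramsey2}. For $\n\in[\N]^k$, the tuples $(l_{2n_1},\dots,l_{2n_k})$ and $(l_{2n_1+1},\dots,l_{2n_k+1})$ are adjacent (and distinct) in $[\M]^k$, so by (ii) one gets $\sum_{i=1}^k y^*(n_1,\dots,n_i)=f(l_{2n_1},\dots,l_{2n_k})-f(l_{2n_1+1},\dots,l_{2n_k+1})$, whence $\|z^*(\n)\|\le\Lip(f)$ at the leaves. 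For internal nodes, since $(x^*(m_1,\dots,m_{i-1},t))_t$ is weak$^*$-null and $l_{2t},l_{2t+1}\to\infty$, each $y^*(n_1,\dots,n_{i-1},t)$ is weak$^*$-null in $t$; hence $z^*(n_1,\dots,n_j,t)=z^*(n_1,\dots,n_j)+y^*(n_1,\dots,n_j,t)$ converges weak$^*$ to $z^*(n_1,\dots,n_j)$, which is exactly the weak$^*$-continuity of $(z^*(\n))$, and by weak$^*$ lower semicontinuity of the norm together with a downward induction on the height of $\n$ one obtains $\|z^*(\n)\|\le\Lip(f)$ for all $\n\in[\N]^{\le k}$. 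This establishes (iv).

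There is no genuine obstacle here: the proposition is a summary of the preceding lemmas, and the only thing demanding attention is the bookkeeping — carrying out the three extractions in the correct nested order, and verifying that passing to a further infinite subset preserves both the weak$^*$-nullity of the tree and the Lipschitz constant of $f$ (this is where Proposition~\ref{p:KaltonDistanceFormula} enters), as well as noting that the uniform norm bound on $(z^*(\n))$ must be obtained by inducting from the leaves downward.
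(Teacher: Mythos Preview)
Your proposal is correct and follows exactly the approach of the paper: the proposition is explicitly presented there as a summary of Proposition~\ref{w*nulltree}, Lemma~\ref{Ramsey}, Lemma~\ref{Ramsey2} and the paragraph preceding the statement, assembled via the same three nested extractions $\M_0\supset\M_1\supset\M_2=\M$. You even supply details the paper leaves implicit, namely the weak$^*$-continuity of $(z^*(\n))$ and the propagation of the bound $\|z^*(\n)\|\le\Lip(f)$ from the leaves to internal nodes by weak$^*$ lower semicontinuity.
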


\section{Property \texorpdfstring{$\mathcal{Q}_p$}{Qp}}\label{SectionPropQ}
N. Kalton proved in \cite{Kalton2007Quarterly}*{Theorem 3.6} that $c_0$ neither coarsely nor uniformly embeds into any Banach space $X$ whose  iterated duals are all separable. In the same paper, N. Kalton introduced the notion of property $\Q$ for a Banach space and showed that any reflexive Banach space has property $\Q$. Recall, a Banach space $X$ has \emph{property $\Q$}  if there exists $C\ge 1$ such that for every $k \in \N$ and every Lipschitz map $f \colon ([\N]^k,d_{\K}) \to X$, there exists an infinite subset $\M$ of $\N$ such that
\[  \|f(\overline{n})-f(\overline{m})\| \leq  C\omega_f(1)\]
 for all $\overline{n},\overline{m} \in [\M]^k$.

In this section, we introduce property $\Q_p$ for $p\in (1,\infty]$, which coincides with property $\Q$ when $p=\infty$. We then   give a sufficient condition for a Banach space to have property $\Q_p$ and use this in order to obtain some applications to the theory of nonlinear embeddings between Banach spaces.

\begin{definition}\label{PropertyQp}
Let $p \in (1,+\infty]$. We say that a Banach space $X$ has    \emph{property $\Q_p$} if there exists $C\ge 1$ such that for every $k \in \N$ and every Lipschitz map $f \colon ([\N]^k,d_{\K}) \to X$, there exists an infinite subset $\M$ of $\N$ such that
\[ \|f(\overline{n})-f(\overline{m})\| \leq  C\omega_f(1) k^{\frac{1}{p}}\]
 for all $\overline{n},\overline{m} \in [\M]^k$ (if $p=\infty$, we use the convention that $1/\infty=0$).
\end{definition}

Clearly, property $\Q_p$ implies property $\Q_q$ for all $q<p$. Hence, since every Banach space which either coarsely or uniformly embeds into a reflexive space has property $\Q$ \cite{Kalton2007Quarterly}*{Corollary 4.3}, the same holds for property $\Q_p$ for any $p\in (1,\infty]$.

The next proposition illustrates some simple permanence properties of  property $\Q_p$. Since its proof is immediate, we choose to omit it.

\begin{proposition}\label{PropositionPropQp}
Let $p\in (1,\infty]$ and let $X$ be a Banach space with property $\Q_p$. The following hold.
\begin{enumerate}[(i)]
\item If $Y$ coarse Lipschitz embeds into $X$, then $Y$ has property $\Q_p$.
\item If $\alpha_X(Y)=\alpha$, then for every $\varepsilon>0$ the space $Y$ has property $\Q_{p(\alpha-\varepsilon)}$.
\item The family $([\N]^k,d_{\K})_k$ does not equi-coarsely Lipschitz embed into $X$.
\item If $p=\infty$, then $([\N]^k,d_{\K})_k$ does not equi-coarsely   embed into $X$.
\end{enumerate}
\end{proposition}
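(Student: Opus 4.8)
The plan is to obtain all four items directly from property $\Q_p$ of $X$, in each case by composing with the relevant (coarse-)Lipschitz map, invoking the concentration given by $\Q_p$ of $X$, and then reading off a bound on the source map via the lower compression estimate. The one elementary trick is $k^{1/p}\ge 1$ for every $k\in\N$, which is exactly what makes the additive constants of coarse-Lipschitz estimates harmless. A preliminary remark is used everywhere: since $([\N]^k,\dK)$ carries a shortest-path metric, any map $h$ on it with $\omega_h(1)<\infty$ is automatically $\omega_h(1)$-Lipschitz (concatenate along a geodesic), so $\Q_p$ of $X$ applies to every such map with values in $X$.

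For (i) and (ii) I would argue as follows. Let $g$ realize the coarse Lipschitz embedding (item (i)), resp.\ a coarse Lipschitz map of compression exponent $\beta$ for some fixed $\beta$ with $\alpha-\varepsilon<\beta<\alpha=\alpha_X(Y)$ (item (ii)); in both cases $\omega_g(t)\le Bt+D$, and $\rho_g(t)\ge At^{\theta}-C$ with $\theta=1$, resp.\ $\theta=\beta$. Let $C_X\ge1$ witness $\Q_p$ of $X$. Fix $k$ and a Lipschitz $f:([\N]^k,\dK)\to Y$; by positive homogeneity of the target inequality in $f$ we may assume $\omega_f(1)=1$ (the case $\omega_f(1)=0$ is trivial, $f$ being then constant on the connected graph). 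Then $g\circ f$ has $\omega_{g\circ f}(1)\le B+D$, so $\Q_p$ produces an infinite $\M\subseteq\N$ with $\|g(f(\n))-g(f(\m))\|\le C_X(B+D)k^{1/p}$ for all $\n,\m\in[\M]^k$. Feeding this into $\rho_g$ gives $A\|f(\n)-f(\m)\|^{\theta}-C\le C_X(B+D)k^{1/p}$, whence, using $k^{1/p}\ge1$,
\[\|f(\n)-f(\m)\|\le\Big(\tfrac{C_X(B+D)+C}{A}\Big)^{1/\theta}\,k^{1/(p\theta)}\qquad(\n,\m\in[\M]^k).\]
For $\theta=1$ this is precisely $\Q_p$ for $Y$; for $\theta=\beta$ it is $\Q_{p\beta}$ (implicitly assuming $p\beta>1$), and since $p\beta>p(\alpha-\varepsilon)$ this gives $\Q_{p(\alpha-\varepsilon)}$.

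For (iii) and (iv) I would suppose towards a contradiction that $([\N]^k,\dK)_k$ equi-coarsely Lipschitz (resp.\ equi-coarsely) embeds into $X$ via maps $f_k$, with uniform moduli $\rho\le\rho_{f_k}$ and $\omega_{f_k}\le\omega$ satisfying $\rho(t)\ge At-C$, $\omega(t)\le Bt+D$ (resp.\ $\lim_{t\to\infty}\rho(t)=\infty$, $\omega(t)<\infty$). Each $f_k$ is $\omega(1)$-Lipschitz, so $\Q_p$ of $X$ yields an infinite $\M_k\subseteq\N$ with $\|f_k(\n)-f_k(\m)\|\le C_X\,\omega(1)\,k^{1/p}$ for all $\n,\m\in[\M_k]^k$ (with $k^{1/p}=1$ when $p=\infty$). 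Picking $\n<\m$ in $[\M_k]^k$ (the first $k$ and the next $k$ elements of $\M_k$) we have $\dK(\n,\m)=k$ by Proposition~\ref{p:KaltonDistanceFormula}, which is the diameter of $([\N]^k,\dK)$; hence $Ak-C\le\rho(k)\le\|f_k(\n)-f_k(\m)\|\le C_X\omega(1)k^{1/p}$ for every $k$, impossible for large $k$ since $1/p<1$. In case (iv) the same pair gives $\rho(k)\le C_X\omega(1)$ for all $k$, contradicting $\rho(k)\to\infty$.

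There is no genuine obstacle here; as the authors say, the statements are immediate. The only points worth isolating are the preliminary Lipschitz remark (so that $\Q_p$ of $X$ applies to the composed maps $g\circ f$ and to the $f_k$) and the combination of the normalization $\omega_f(1)=1$ with $k^{1/p}\ge1$, which together absorb all additive and multiplicative constants into the final $\Q$-constant of $Y$; for (iii)--(iv) one additionally uses that the diameter $k$ of $([\N]^k,\dK)$ is attained by a pair lying inside the concentration set $\M_k$.
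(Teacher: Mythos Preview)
Your proof is correct and is precisely the routine verification the paper has in mind; the paper itself omits the proof entirely (``Since its proof is immediate, we choose to omit it''), so there is nothing to compare against. The two points you isolate --- that on the graph $([\N]^k,\dK)$ any map with finite $\omega_f(1)$ is automatically Lipschitz, and that $k^{1/p}\ge 1$ absorbs the additive constants --- are exactly the small observations needed to make the argument go through.
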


The next theorem allows us to obtain new examples of spaces with property~$\Q_p$ and relates this property with asymptotic uniform convexity.

\begin{theorem}\label{ThmAUCandPropQp}
Let $X$ be a  Banach space and let $p\in (1,+\infty]$. Assume that $X$ admits an equivalent norm which is $p$-AUS  (or equivalently whose dual norm is $q$-AUC$^*$, where $q$ is the conjugate exponent of $p$). Then $X^*$ has property $\mathcal{Q}_p$.
\end{theorem}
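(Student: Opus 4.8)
The statement is a consequence of the structure results assembled in Proposition~\ref{summarize} together with the $q$-AUC$^*$ tree estimate of Proposition~\ref{qAUC-trees}. So let $X$ be a Banach space with an equivalent dual $q$-AUC$^*$ norm, fix $k\in\N$ and a Lipschitz map $f\colon([\N]^k,d_\K)\to X^*$; we may assume $\Lip(f)=\omega_f(1)=1$. By Proposition~\ref{toutcon} we lose nothing by assuming $X$ is separable, since the closed linear span of $f([\N]^k)$ embeds isometrically in $Y^*$ for a separable subspace $Y$, and a subspace of a $q$-AUC$^*$ dual norm is $q$-AUC$^*$. Applying Proposition~\ref{w*nulltree} we get $\M\in[\N]^\omega$ and a weak$^*$-null tree $(x^*(\m))_{\m\in[\M]^{\le k}}$ in $B_{X^*}$ with $f(\n)=\sum_{\m\preceq\n}x^*(\m)$ for $\n\in[\M]^k$.

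\textbf{Key steps.} Apply Proposition~\ref{qAUC-trees} to this weak$^*$-null tree: after passing to a further infinite subset (still called $\M$), there is a constant $c>0$, depending only on the renorming, such that for every $\n\in[\M]^k$,
\[
\Big\|\sum_{\m\preceq\n}x^*(\m)\Big\|^q\ge c\sum_{\m\preceq\n}\|x^*(\m)\|^q.
\]
Now fix $\n,\m\in[\M]^k$. The point is that $f(\n)-f(\m)$ can be written as a difference of two partial sums along branches of the tree, and the number of nonzero ``new'' increments contributing is controlled by the graph distance: writing $\n=(n_1,\dots,n_k)$, $\m=(m_1,\dots,m_k)$ and letting $j$ be the length of the longest common initial segment, one has $f(\n)-f(\m)=\sum_{i>j}x^*(n_1,\dots,n_i)-\sum_{i>j}x^*(m_1,\dots,m_i)$, a sum of at most $2(k-j)$ terms each of norm at most $1$. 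Using the reverse estimate above for the two branches $\n$ and $\m$ separately, together with $\|x^*(\m)\|\le 1$ (so $\|x^*(\m)\|^q\le\|x^*(\m)\|\le$ nothing useful directly — instead one bounds $\sum\|x^*(\cdot)\|\le (\#\text{terms})^{1-1/q}(\sum\|x^*(\cdot)\|^q)^{1/q}$ by Hölder), we obtain
\[
\|f(\n)-f(\m)\|\le \|f(\n)\|+\|f(\m)\|\le \big(c^{-1}\big)^{1/q}\Big(\big(\textstyle\sum_{\m\preceq\n}\|x^*(\m)\|^q\big)^{1/q}+\big(\textstyle\sum_{\m\preceq\m}\|x^*(\m)\|^q\big)^{1/q}\Big)\le 2c^{-1/q}k^{1/q}.
\]
Since $1/q=1-1/p$ this is not yet $k^{1/p}$; the honest argument must not throw away the common initial segment. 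So instead of bounding $\|f(\n)\|+\|f(\m)\|$, subtract the common-branch contribution: apply Proposition~\ref{qAUC-trees} more carefully, or rather re-run the proof of Proposition~\ref{qAUC-trees} keeping track of partial sums, to get that for $\n,\m$ sharing the initial segment of length $j$, $\|f(\n)-f(\m)\|^q\le C\big(\sum_{i>j}\|x^*(n_1,\dots,n_i)\|^q+\sum_{i>j}\|x^*(m_1,\dots,m_i)\|^q\big)\le 2C(k-j)$, hence $\|f(\n)-f(\m)\|\le (2C)^{1/q}(k-j)^{1/q}$. Finally bound $k-j$ in terms of $d_\K(\n,\m)$: from Proposition~\ref{p:KaltonDistanceFormula}, $d_\K(\n,\m)\ge$ something like $\lceil(k-j)/2\rceil$ — in fact two tuples agreeing on the first $j$ coordinates and differing thereafter are at distance at least $(k-j)/2$ in $d_\K$ — wait, this gives $\|f(\n)-f(\m)\|\le C'd_\K(\n,\m)^{1/q}$, i.e.\ a compression-type inequality, not the concentration inequality we want. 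The concentration inequality instead needs us to pass to a subset $\M'$ on which all the $\|x^*(\cdot)\|$ on a given level are either uniformly small or uniformly bounded away from $0$.

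\textbf{The real argument, and the main obstacle.} The cleaner route, which I expect is the intended one, is: after the reductions above, apply Proposition~\ref{summarize} (for a small $\eps$) to obtain the level constants $K_1,\dots,K_k\in[0,1]$ with $K_i\le\|x^*(n_1,\dots,n_i)\|\le K_i+\eps$ on $[\M]^i$, and the weak$^*$-continuous tree $(z^*(\n))$ with $\|y^*(n_1,\dots,n_i)\|\ge K_i/2$. The set $I=\{i:K_i>\eps\}$ is exactly the set of ``active'' levels. On one hand $(z^*(\n))_{\n\in[\N]^{\le k}}$ is weak$^*$-continuous with increments of norm $\ge K_i/2$ at each level $i\in I$, so by Proposition~\ref{Szlenk-trees} we get $0=z^*_\emptyset\in s_{K_{i_l}/2}\cdots s_{K_{i_1}/2}(B_{X^*})$ where $I=\{i_1<\dots<i_l\}$; a $q$-AUC$^*$ norm has $\Sz(\cdot,\eps)\lesssim \eps^{-q}$ in a summable-type sense, so in fact $\sum_{i\in I}(K_i/2)^q\le M$ for a constant $M$ depending only on the renorming. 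On the other hand, for $i\notin I$ we have $\|x^*(n_1,\dots,n_i)\|\le 2\eps$, so for any $\n,\m\in[\M]^k$, splitting $f(\n)-f(\m)$ into the $i\in I$ part and the $i\notin I$ part: the $i\notin I$ part contributes at most $2k\cdot2\eps$; for the $i\in I$ part there are at most $l=|I|$ increments from each branch, and $\sum_{i\in I}\|x^*(\cdot)\|\le \sum_{i\in I}(K_i+\eps)\le |I|^{1-1/q}(\sum_{i\in I}(K_i+\eps)^q)^{1/q}\lesssim |I|^{1/p}(M^{1/q}+\eps|I|^{1/q})\lesssim k^{1/p}$ after choosing $\eps=\eps_k$ comparable to $1/k$, using $|I|\le k$. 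Thus $\|f(\n)-f(\m)\|\le C k^{1/p}$ with $C$ independent of $k$ and $f$, which is property $\Q_p$. The main obstacle is the second ingredient of the summary step: we need a genuine summability estimate $\sum_{i\in I}(K_i/2)^q\le M$ from the $q$-AUC$^*$ assumption, rather than merely $\Sz(X)=\omega$; this is exactly the quantitative content that distinguishes $\Q_p$ from $\Q$, and it should be extracted either from Proposition~\ref{as-sequences} iterated along the branch (as in the proof of Proposition~\ref{qAUC-trees}) applied to the continuous tree $(z^*(\n))$, or equivalently from the known fact that a $q$-AUC$^*$ dual space has ``$q$-summable Szlenk index''. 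I would carry out the iteration of Proposition~\ref{as-sequences} directly along a single branch of $(z^*(\n))$ to keep the paper self-contained, which is where the only real calculation lies.
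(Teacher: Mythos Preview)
Your third attempt is correct in substance, and the paper follows the same overall strategy: reduce to separable $X$, pass to the weak$^*$-null tree decomposition, extract the level constants $K_1,\dots,K_k$ via Proposition~\ref{summarize}, establish $\sum_i K_i^q\le C$ from the $q$-AUC$^*$ hypothesis, and finish by H\"older. The difference lies in how the crucial bound $\sum_i K_i^q\le C$ is obtained, and here the paper's route is more direct than yours.

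The paper does not pass through the $z^*$ tree or Szlenk derivations at all. Instead it applies Proposition~\ref{qAUC-trees} to a depth-$2k$ interlaced weak$^*$-null tree $u^*$, defined by $u^*(n_1,\dots,n_l)=x^*(n_1,n_3,\dots,n_l)$ for $l$ odd and $-x^*(n_2,n_4,\dots,n_l)$ for $l$ even. After passing to a subset one obtains $\n=(n_1,n_3,\dots,n_{2k-1})$ and $\m=(n_2,n_4,\dots,n_{2k})$ which are \emph{adjacent} in the interlaced graph, so $\|f(\n)-f(\m)\|\le 1$, while the lower $\ell_q$-estimate yields $\|f(\n)-f(\m)\|^q\ge c\big(\sum_i\|x^*(n_1,\dots,n_{2i-1})\|^q+\sum_i\|x^*(n_2,\dots,n_{2i})\|^q\big)\ge 2c\sum_i K_i^q$. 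This gives $\sum_i K_i^q\le(2c)^{-1}$ in one stroke. Your route via the $z^*$ tree is really the same idea in disguise --- the bound $\|z^*(\n)\|\le 1$ in Proposition~\ref{summarize} holds precisely because $z^*(\n)=f(l_{2n_1},\dots,l_{2n_k})-f(l_{2n_1+1},\dots,l_{2n_k+1})$ with the two tuples adjacent --- but you then detour through Szlenk derivations rather than simply applying Proposition~\ref{qAUC-trees} to the weak$^*$-null tree $(y^*(\m))$, which would give $1\ge\|z^*(\n)\|^q\ge c\sum_i(K_i/2)^q$ directly.

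Two smaller points. First, your heuristic ``$\Sz(X,\eps)\lesssim\eps^{-q}$ hence $\sum_{i\in I}(K_i/2)^q\le M$'' is not a valid deduction as stated: Szlenk $q$-power type is a priori weaker than $q$-summable Szlenk. Your fallback of iterating Proposition~\ref{as-sequences} along a branch is the right fix, but that is exactly re-proving Proposition~\ref{qAUC-trees}, so just invoke it. Second, the split into $I=\{i:K_i>\eps\}$ and its complement is unnecessary: once $\sum_i K_i^q\le M$ is known for all $i$, H\"older gives $\sum_i K_i\le k^{1/p}M^{1/q}$, and then $\mathrm{diam}\, f([\M]^k)\le 2\sum_i(K_i+\eps)\le 2k^{1/p}M^{1/q}+2k\eps$, which is the paper's final line after choosing $\eps$ small.
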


\begin{proof} Assume, as it is allowed by Proposition~\ref{toutcon}, that $X$ is separable and that its norm is $p$-AUS. Therefore, the norm of $X^*$ is $q$-AUC$^*$, where $q$ is the conjugate exponent of $p$. Let $f:([\N]^k,d_{\K})\to X^*$ be a 1-Lipschitz map and fix $\eps>0$. Consider $\M\in [\N]^\omega$ and $(K_i)_{i=1}^k$  given by Proposition~\ref{summarize}. Since $(x^*(\m))_{\m\in[\M]^{\le k}}$ is a weak$^*$-null tree in $X^*$, it follows from Proposition~\ref{qAUC-trees} that we can find $n_1 < m_1 < \ldots < n_k < m_k$ in $\M$ so that we have the following lower $\ell_q$ estimate:
\begin{eqnarray*}
\| f(\n) - f(\m) \|^q &=& \Big\| \sum_{i=1}^k x^*(n_1, \ldots , n_k) - x^*(m_1, \ldots , m_k) \Big\|^q \\
&\geq&   c\left(\sum_{i=1}^k \norm{x^*(n_1, \ldots , n_k) }^q + \norm{x^*(m_1, \ldots , m_k) }^q \right),
\end{eqnarray*}
where $c>0$, only depends on the AUC$^*$ modulus of $X^*$.   Formally, we have applied Proposition~\ref{qAUC-trees} to the weak$^*$-null tree $(u^*(\m))_{\m\in[\M]^{\le 2k}}$ given by
\[u^*(n_1,\ldots,n_{l})=\left\{\begin{array}{l l }
x^*(n_1,n_3,\ldots,n_{l}),& \text{ if }l\text{ is odd}\\
-x^*(n_2,n_4,\ldots, n_l),& \text{ if }l \text{ is even}.
\end{array}\right.\]
 Since $f$ is 1-Lipschitz, we deduce that
$$ \sum_{i=1}^k K_i^q \leq \frac{1}{2c}.$$
Using H\"{o}lder's inequality and item $(iii)$ in Proposition~\ref{summarize}, this implies that for every $\n, \m \in [\M]^k$:
$$
\|f(\n) - f(\m)\| \leq 2 \sum_{i=1}^{k} K_i +2k\eps
\leq  \frac{2k^{1/p}}{(2c)^{1/q}} + 2k\eps .
$$
If $\eps$ was initially chosen small enough, this gives us the desired estimate.
\end{proof}

Let $p\in (1,\infty)$. We now recall the definition and some basic properties of the James space $\J_p$. We refer the reader to \cite{AlbiacKalton2006}*{Section 3.4} and references therein for more details on the classical case $p=2$. The James space $\J_p$ is the real Banach space of all sequences $x=(x(n))_{n\in \N}$ of real numbers with finite $p$-variation and verifying $\lim_{n \to \infty} x(n) =0$. The space $\J_p$ is endowed with the following norm
$$\|x\|_{\J_p} = \sup  \Big \{  \big (\sum_{i=1}^{k-1} |x(p_{i+1})-x(p_i)|^p \big )^{1/p}     \; \colon \; 1 \leq p_1 < p_2 < \ldots < p_{k} \Big \}. $$
This is the historical example, constructed for $p=2$ by R.C. James, of a quasi-reflexive Banach space which is isomorphic to its bidual. In fact $\J_p^{**}$ can be seen as the space of all sequences $x=(x(n))_{n\in \N}$ of real numbers with finite $p$-variation, which is $\J_p \oplus \R e$, where $e$ denotes the constant sequence equal to 1.

The standard unit vector basis $(e_n)_{n=1}^{\infty}$ is a monotone shrinking basis for $\J_p$. Hence, the sequence $(e_n^*)_{n=1}^{\infty}$ of the associated coordinate functionals is a basis of its dual $\J_p^*$.

 N. Kalton also proved that the James space $\J_2$ and its dual $\J_2^*$ fail property $\Q$ (see \cite{Kalton2007Quarterly}*{Proposition 4.7}). On the other hand, it is shown in \cite{LancienPetitjeanProchazka2018}*{Corollary 5.3} that the family $([\N]^k,\dk)_k$ does not equi-coarsely embed in $\J_p$, nor in $\J_p^*$ for any $p\in (1,\infty)$. It is known that, for $p\in (1,\infty)$,  $\J_p$ admits an equivalent $p$-AUS norm and $\J_p^*$ admits an equivalent $p'$-AUS norm, where $p'$ is the conjugate exponent of $p$ (see \cites{Lancien1994, Netillard2016}). Therefore we can state.

\begin{corollary} \label{cor:JamesQ_p}
Let $p\in (1,\infty)$ and $p'$ be its conjugate exponent. Then $\J_p$ has property $\Q_{p'}$ and $\J_p^*$ has property $\Q_p$.
\end{corollary}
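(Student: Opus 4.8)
The plan is to deduce both assertions from Theorem~\ref{ThmAUCandPropQp}, combined with the two renorming facts recalled just above the statement (that $\J_p$ carries an equivalent $p$-AUS norm and $\J_p^*$ carries an equivalent $p'$-AUS norm, see \cites{Lancien1994, Netillard2016}), together with the trivial permanence of property $\Q_{p'}$ under passing to subspaces.

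First I would handle $\J_p^*$, which is the easy half. Since $\J_p$ admits an equivalent $p$-AUS norm, Theorem~\ref{ThmAUCandPropQp} applied with $X=\J_p$ directly yields that $X^*=\J_p^*$ has property $\Q_p$, with nothing further to check.

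For $\J_p$ itself one cannot invoke Theorem~\ref{ThmAUCandPropQp} directly, since $\J_p$ is quasi-reflexive but not canonically a dual space; so I would pass through the bidual. Because $(e_n)_{n\ge 1}$ is a shrinking basis of $\J_p$, the canonical map embeds $\J_p$ isometrically as a (linear) subspace of $\J_p^{**}$, concretely $\J_p^{**}=\J_p\oplus\R e$ as recalled above. Now $\J_p^*$ admits an equivalent $p'$-AUS norm, so Theorem~\ref{ThmAUCandPropQp} applied with $X=\J_p^*$ gives that $\J_p^{**}=(\J_p^*)^*$ has property $\Q_{p'}$. Finally, the canonical inclusion $\J_p\hookrightarrow\J_p^{**}$ is in particular a Lipschitz, hence coarse Lipschitz, embedding, so Proposition~\ref{PropositionPropQp}(i) shows that $\J_p$ inherits property $\Q_{p'}$ from $\J_p^{**}$. (Alternatively one could note that property $\Q_{p'}$ passes to linear subspaces straight from Definition~\ref{PropertyQp}, any Lipschitz map into the subspace being a Lipschitz map into the whole space.)

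There is essentially no obstacle: the only point requiring a moment's care is the need to route through $\J_p^{**}$ rather than applying Theorem~\ref{ThmAUCandPropQp} to $\J_p$ directly, and the fact that the argument rests on the (known, but non-trivial) existence of the $p$-AUS and $p'$-AUS renormings of $\J_p$ and $\J_p^*$ quoted above.
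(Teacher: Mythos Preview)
Your proposal is correct and matches the paper's intended argument: the paper gives no explicit proof, merely recording the two renorming facts and writing ``Therefore we can state,'' so the deduction via Theorem~\ref{ThmAUCandPropQp} (directly for $\J_p^*$, and via $\J_p\hookrightarrow\J_p^{**}$ for $\J_p$) is exactly what is meant. Your care in routing through $\J_p^{**}$ is appropriate, and the alternative you mention---that property~$\Q_{p'}$ passes to subspaces straight from the definition---works equally well.
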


A Banach space $X$ is said to have the \emph{alternating Banach-Saks property} if every bounded sequence $(x_n)_n$ in $X$ has a subsequence $(x_{n_j})_j$ so that its sequence of alternating-sign Ces\`{a}ro means $(\frac1k\sum_{j=1}^k(-1)^jx_{n_j})_k$ converges to $0$. N. Kalton proved in \cite{Kalton2007Quarterly}*{Theorem 4.5} that a Banach space with the alternating Banach-Saks property which also has property $\Q$ must be reflexive. We now present the $p$-version of this result. For that, we will need the following theorem, which is a version of  \cite{Kalton2007Quarterly}*{Theorem 4.4} to property $\Q_p$.

\begin{theorem}\label{clusterpoint}
Let $C\geq 1$, $p\in (1,\infty)$ and  $X$ be a Banach space  with  property $\Q_p$ with constant $C$. Then, for all $\eps>0$ and all bounded sequences $(x_n)_n$ in $X$ with weak$^*$ cluster point $x^{**}\in X^{**}$, there exists an infinite subset $\M$ of $\N$ so that
\[\Big\|\sum_{j=1}^{2k}(-1)^jx_{n_j}\Big\|\geq \frac{(1-\eps)}{C}d(x^{**},X)k^{1-1/p},\]
for all $k\in\N$ and all $n_1<\ldots<n_{2k}\in\M$.
\end{theorem}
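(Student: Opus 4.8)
\emph{Proof proposal.} The plan is to follow the strategy of \cite{Kalton2007Quarterly}*{Theorem~4.4}, recalibrated to the scale $k^{1-1/p}$. First I would make the routine reductions. Replacing $X$ by $Y:=\overline{\mathrm{span}}\{x_n:n\in\N\}$ we may assume $X$ is separable: $Y$ has property $\Q_p$ with the same constant $C$ by Proposition~\ref{PropositionPropQp}(i), the relevant alternating sums are unchanged, and, viewing $x^{**}$ canonically inside $Y^{**}$, one has $d(x^{**},Y)\ge d(x^{**},X)$, so it is enough to prove the statement over $Y$. We may assume $d:=d(x^{**},X)>0$. Set $M:=\sup_n\|x_n\|$ and fix a small $\eta\in(0,\eps d/(2C)]$ (to be absorbed into $\eps$ at the end). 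The engine of the proof is, for each $k$, the map $f_k\colon([\N]^k,d_\K)\to X$ defined by $f_k(\n)=\sum_{i=1}^k x_{n_i}$.

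Two estimates for $f_k$ carry the argument. \emph{(a) Lower bound on the range.} If $\M\in[\N]^\omega$ is such that $x^{**}$ is still a weak$^*$ cluster point of $(x_n)_{n\in\M}$, then choosing $k$ indices of $\M$ on which the $x_n$ simultaneously approximate $x^{**}$ shows that $kx^{**}$ lies in the weak$^*$ closure of $f_k([\M]^k)$ in $X^{**}$; since the norm of $X^{**}$ is weak$^*$ lower semicontinuous and a bounded set has the same diameter as its weak$^*$ closure, for any $\m=(m_1,\dots,m_k)\in[\M]^k$
\[ \diam f_k([\M]^k)\ \ge\ \Big\|kx^{**}-\sum_{i=1}^k x_{m_i}\Big\|\ =\ k\,\Big\|x^{**}-\tfrac1k\sum_{i=1}^k x_{m_i}\Big\|\ \ge\ kd. \]
\emph{(b) Upper bound on the Lipschitz constant.} For adjacent $\n\sim\m$ in $[\M]^k$ one may assume $n_1\le m_1\le\cdots\le n_k\le m_k$; shared indices cancel in $f_k(\n)-f_k(\m)=\sum_{i=1}^k(x_{n_i}-x_{m_i})$, and a short counting argument shows that the surviving indices again interlace, so $f_k(\n)-f_k(\m)=\pm\sum_{i=1}^{2j}(-1)^i x_{l_i}$ for some $l_1<\cdots<l_{2j}$ in $\M$ with $j\le k$. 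Denoting by $S_j(\M)$ the supremum of $\|\sum_{i=1}^{2j}(-1)^i x_{l_i}\|$ over $l_1<\cdots<l_{2j}$ in $\M$, this gives $\omega_{f_k}(1)\le\max_{j\le k}S_j(\M)$ when $f_k$ is restricted to $[\M]^k$.

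Next I would record that $S_j(\M)\le S_{j+1}(\M)$ for every $\M$: given a near-supremal $u=\sum_{i=1}^{2j}(-1)^i x_{l_i}$ over $\M$ and a norming functional $\phi$ of $u$, append $l_{2j+1}<l_{2j+2}$ in $\M$ with $\phi(x_{l_{2j+2}})\ge\phi(x_{l_{2j+1}})-\eta$ (choose $\phi(x_{l_{2j+1}})$ within $\eta$ of $\inf_{m\in\M,\,m>l_{2j}}\phi(x_m)$), so that $\|u+(x_{l_{2j+2}}-x_{l_{2j+1}})\|\ge\|u\|-\eta$. Consequently $\beta_j:=\inf\{S_j(\M):\M\in[\N]^\omega\}$ is nondecreasing in $j$. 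A Ramsey argument — colour $[\N]^{2j}$ by $\big\lfloor\tfrac1\eta\|\sum_{i=1}^{2j}(-1)^i x_{l_i}\|\big\rfloor$, which takes finitely many values since this norm is at most $2jM$ — together with the definition of $\beta_j$ and a diagonalisation over $j$, yields a single $\M_\star\in[\N]^\omega$ along which every alternating sum of length $2j$ is within $\eta$ of $\beta_j$, for all $j$; in particular $S_j(\M_\star)\le\beta_j+\eta$. Finally, for fixed $k$, apply property $\Q_p$ (with constant $C$) to the restriction of $f_k$ to $[\M_\star]^k$ (legitimate since $[\M_\star]^k$ is isometric to $[\N]^k$): one gets $\M'\subseteq\M_\star$ with $\diam f_k([\M']^k)\le C\,\omega_{f_k}(1)\,k^{1/p}\le C(\beta_k+\eta)k^{1/p}$, using (b) and the monotonicity of $\beta$. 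Combined with (a) on $\M'$ this reads $kd\le C(\beta_k+\eta)k^{1/p}$, i.e. $\beta_k\ge\tfrac dC k^{1-1/p}-\eta$; since $k^{1-1/p}\ge1$ and $\eta\le\eps d/(2C)$ this upgrades to $\beta_k\ge(1-\tfrac\eps2)\tfrac dC k^{1-1/p}$ for every $k$. Hence on $\M_\star$ every alternating sum of length $2k$ is at least $\beta_k-\eta\ge(1-\eps)\tfrac dC k^{1-1/p}$, and $\M:=\M_\star$ is the required set.

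The delicate point, which I expect to be the main obstacle, is that estimate (a) requires $x^{**}$ to survive as a weak$^*$ cluster point along $\M_\star$ \emph{and} along every $\M'$ later produced by $\Q_p$. When $X^*$ is separable this is painless: $(B_{X^{**}},w^*)$ is then metrizable, so one begins by replacing $(x_n)$ with a subsequence weak$^*$-convergent to $x^{**}$, and every further infinite subset retains it. In general no such subsequence need exist (already in $\ell_1$), and one must interleave the Ramsey/diagonal construction with an ultrafilter argument — performing each extraction step inside a member of a fixed ultrafilter refining the weak$^*$ neighbourhood filter of $x^{**}$, and, if necessary, using in (a) a suitable ultralimit of the $x_n$ in place of $x^{**}$ at no cost to $d$ — while keeping everything compatible with the homogenisation; this is the technical heart of the proof. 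The lesser, purely arithmetical matter of turning the $\Q_p$ estimate into a bound valid for \emph{all} $k$ is exactly what the monotonicity of $(\beta_j)$ and the single parameter $\eta$ are there to handle.
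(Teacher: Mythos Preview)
Your overall architecture --- define $f_k(\n)=\sum_{i=1}^k x_{n_i}$, bound $\omega_{f_k}(1)$ by alternating sums, homogenize via Ramsey, feed into $\Q_p$ --- is exactly the paper's. Your monotonicity argument for $S_j(\M)\le S_{j+1}(\M)$ is a neat alternative to what the paper does at that step: the paper instead extracts a $\lambda$-basic subsequence of $(x_n-x^{**})_n$ (possible since $0$ lies in the weak$^*$ closure but not the norm closure of this sequence), and then the basis-projection inequality $\bigl\|\sum_{j=1}^{2l}(-1)^jx_{n_j}\bigr\|\le\lambda\bigl\|\sum_{j=1}^{2k}(-1)^jx_{n_j}\bigr\|$ for $l<k$ directly gives $\omega_{f_k}(1)\le\lambda b_k$.

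The ``delicate point'' you flag, however, is a genuine gap in your route, and the ultrafilter patch is not convincing as stated: property $\Q_p$ produces an infinite $\M'$ over which you have no control, so there is no reason $\M'$ should belong to any pre-chosen ultrafilter, nor that a weak$^*$ ultralimit of $(x_n)_{n\in\M'}$ should lie at distance $\ge d$ from $X$. The paper avoids the issue entirely by replacing your lower bound (a) with James' separated-convex-hulls argument: after a single initial extraction one has
\[
\Big\|\sum_{j=1}^k a_j x_{n_j}-\sum_{j=1}^k b_j x_{m_j}\Big\|\ge \lambda^{-1}\theta
\]
for all $\n<\m$ in $[\N]^k$ and all convex coefficients $(a_j),(b_j)$. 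This is a norm inequality about the sequence, hence inherited by \emph{every} further infinite subset --- in particular by the $\M'$ handed to you by $\Q_p$. One then picks $\n<\m$ in $[\M']^k$, combines the $\Q_p$ upper bound with the James lower bound $\|f_k(\n)-f_k(\m)\|\ge\lambda^{-1}\theta k$, and reads off $b_k\ge C^{-1}\lambda^{-2}\theta\,k^{1-1/p}$ without ever invoking cluster points again.

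A smaller point you skate over: the Ramsey/diagonalization only controls length-$2k$ alternating sums whose indices lie sufficiently far along $\M_\star$ (with a threshold depending on $k$), not all of them, so your final sentence overstates what you have. The paper arranges, via a ``skipped'' diagonalization, that the threshold for $k$ sits at $\approx\alpha k^{1-1/p}=o(k)$, and then for an arbitrary $\n\in[\M]^{2k}$ replaces the first $\lceil\alpha k^{1-1/p}\rceil$ indices by legitimate ones at a cost $\le 2B\alpha k^{1-1/p}$, which is absorbed into $\eps$ by choosing $\alpha$ small.
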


\begin{proof}
If $x^{**}\in X$, the statement is trivial. Assume that  $\theta= d(x^{**},X)>0$. Let $B=\sup_{n\in\N}\|x_n\|$ and pick $\lambda>1$ and $\alpha \in (0,1)$ so that
\[ C^{-1}\lambda^{-2}\theta-  \alpha -2B\alpha\geq (1-\eps)C^{-1}\theta.\]

A classical argument due to James shows that, going to a subsequence of $(x_n)$, we can assume that
\begin{equation}\label{e:SeparatedConvexHulls}
\Big\|\sum_{j=1}^ka_jx_{n_j}-\sum_{j=1}^kb_jx_{m_j}\Big\|\geq \lambda^{-1}\theta,
\end{equation}
for all $k\in\N$, all $\bar n<\bar m\in [\N]^k$ and all $a_1,\ldots,a_k,b_1,\ldots,b_k\geq 0$ with $\sum_{j=1}^ka_j=\sum_{j=1}^kb_j=1$.

After extracting a further subsequence, we can also assume that
\begin{equation}\label{e:Mazur}
\Big\|\sum_{j=1}^{2l}(-1)^jx_{n_j}\Big\| \le \lambda\Big\|\sum_{j=1}^{2k}(-1)^jx_{n_j}\Big\|,
\end{equation}
for all $l<k\in \N$. Indeed, $0$ is in the weak$^*$-closure of the sequence $(x_n-x^{**})_n$, but not in its norm closure. This implies that $(x_n-x^{**})_n$ admits a $\lambda$-basic subsequence (see for instance Theorem 1.5.2 in \cite{AlbiacKalton2006}). Then (\ref{e:Mazur}) follows immediately after noticing that $\sum_{j=1}^{2l}(-1)^jx_{n_j}=\sum_{j=1}^{2l}(-1)^j(x_{n_j}-x^{**})$.

A simple application of Ramsey theory and a (skipped) diagonalization procedure gives an infinite subset $\M$ so that for all $k\in\N$, there exists $b_k>0$ with the following property:  for all $k\in\N$ and all $\alpha k^{1-1/p}\leq n_1<\ldots<n_{2k}$, we have that
\begin{equation}\label{e:ramsey}
\Big\|\sum_{j=1}^{2k}(-1)^jx_{n_j}\Big\|\in[b_k-\alpha, b_k].
\end{equation}

Fix $k\in\N$, let $\M_k=\{n\in\M\colon n\geq \alpha k^{1-1/p}\}$ and  define $f: ([\M_k]^k,d_{\K})\to X$ by setting
\[f(\bar n)=\sum_{j=1}^{k}x_{n_j},\]
for all $\bar n\in [\M_k]^k$. It follows from (\ref{e:Mazur}) and (\ref{e:ramsey}) that $\omega_f(1)\le \lambda b_k$. Let us mention that (\ref{e:Mazur}) is needed here because two elements of $[\M_k]^k$ at distance 1 for $d_{\K}$ are not always strictly interlaced and therefore (\ref{e:ramsey}) does not apply directly to estimate $\omega_f(1)$.
%It seems that this minor perturbation argument is omitted in the proof of Theorem 4.4 in \cite{Kalton2007Quarterly}.

Since $X$ has property  $\Q_p$, there exist $\bar n<\bar m\in [\M_k]^k$ with
\[\|f(\bar n)-f(\bar m)\|\leq \lambda C b_kk^{1/p}.\]
Therefore, this and \eqref{e:SeparatedConvexHulls} give us that
\[b_k\geq C^{-1}\lambda^{-2}\theta k^{1-1/p}.\]

We now prove a lower estimate for all elements of $[\M]^{2k}$.
Fix $\bar n\in [\M]^{2k}$. Notice that $\lceil\alpha k^{1-1/p}\rceil\leq 2k$.
Let  $\bar m\in [\M_k]^{2k}$ be any element  so that $m_j=n_{\lceil\alpha k^{1-1/p}\rceil+j-1}$, for all $j\in \{1,\ldots,2k-\lceil\alpha k^{1-1/p}\rceil+1\}$. Then, we can pick $\beta\in\{-1,1\}$ so that
\begin{eqnarray*}
\Big\|\sum_{j=1}^{2k}(-1)^j x_{n_j}+\beta\sum_{j=1}^{2k}(-1)^jx_{m_j}\Big\|
&\leq &  \Big\|\sum_{j=1}^{\lceil\alpha k^{1-1/p}\rceil-1}(-1)^j x_{n_j}\Big\|\\
& & +\Big\|\sum_{j=2k-\lceil\alpha k^{1-1/p}\rceil+2}^{2k}(-1)^jx_{m_j}\Big\|\\
&\leq & 2B\alpha k^{1-1/p} .
\end{eqnarray*}

We conclude that
\begin{eqnarray*}
\Big\|\sum_{j=1}^{2k}(-1)^j x_{n_j}\Big\|&\geq &\Big\|\sum_{j=1}^{2k}(-1)^jx_{m_j}\Big\|- 2B\alpha k^{1-1/p}\\
%&\geq& b_k-\alpha-2B\alpha k^{1-1/p}\\
&\geq& C^{-1}\lambda^{-2}\theta k^{1-1/p}-  \alpha -2B\alpha k^{1-1/p}\\
&\geq &(1-\eps)C^{-1}\theta k^{1-1/p}.
\end{eqnarray*}
\end{proof}

We now introduce a $p$-version of the alternating Banach-Saks property.

\begin{definition}
Let $p\in (1,\infty)$ and $C>0$. We say that $X$ has the \emph{$p$-alternating Banach-Saks property with constant $C>0$} if for all sequences $(x_n)_n$ in $B_X$ and all $k\in\N$, there exists an infinite subset $\M\subset \N$ so that
\[\Big\|\sum_{j=1}^k(-1)^jx_{n_j}\Big\|\leq C k^{1/p},\]
for all  $n_1<\ldots<n_k\in\M$.
\end{definition}

Notice that the $p$-alternating Banach-Saks property implies the alternating Banach-Saks property.

\begin{corollary} \label{cor:BanachSaks}
Let $p,q\in (1,\infty)$ be so that $q>p/(p-1)$ (i.e., $q$ is larger than the conjugate exponent of $p$). Let $X$ be a Banach space with the $p$-alternating Banach-Saks property and with property $\Q_{q}$. Then $X$ is reflexive.
\end{corollary}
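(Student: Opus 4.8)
The plan is to argue by contradiction: if $X$ is not reflexive we produce a bounded sequence in $X$ whose alternating sums are simultaneously forced to grow at least like $k^{1-1/q}$ (by property $\Q_q$, through Theorem~\ref{clusterpoint}) and to be bounded above by a multiple of $k^{1/p}$ (by the $p$-alternating Banach-Saks property). Since the hypothesis $q>p/(p-1)$ is precisely the inequality $1-1/q>1/p$, these two estimates are incompatible for large $k$, giving the contradiction.

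First I would reduce to the separable case. A non-reflexive Banach space contains a separable non-reflexive subspace, namely the closed linear span of a bounded sequence with no weakly convergent subsequence (such a sequence exists by the Eberlein--Šmulian theorem). Property $\Q_q$ passes to subspaces by Proposition~\ref{PropositionPropQp}(i), since a subspace embeds isometrically and hence coarse-Lipschitzly; and the $p$-alternating Banach-Saks property passes to subspaces trivially, as $B_Y\subseteq B_X$ for $Y\subseteq X$. So we may assume $X$ is separable and non-reflexive. Then $B_{X^{**}}$ is weak$^*$-metrizable, hence weak$^*$-sequentially compact, so starting from a sequence $(x_n)_n$ in $B_X$ with no weakly convergent subsequence and passing to a subsequence, we may assume $(x_n)_n$ converges weak$^*$ in $X^{**}$ to some $x^{**}$. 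A sequence in $X$ that converges weak$^*$ to a point of $X$ converges weakly, so $x^{**}\notin X$; since $X$ is closed, $\theta:=d(x^{**},X)>0$. Fix $\eps=1/2$ and let $C\ge 1$ witness property $\Q_q$ for $X$. Applying Theorem~\ref{clusterpoint} (with $q$ in the role of $p$) gives an infinite $\M\subseteq\N$ with
\[\Big\|\sum_{j=1}^{2k}(-1)^jx_{n_j}\Big\|\ge \frac{\theta}{2C}\,k^{1-1/q}\]
for every $k\in\N$ and all $n_1<\cdots<n_{2k}$ in $\M$.

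To close the argument, let $C'$ witness the $p$-alternating Banach-Saks property. I would fix $k\in\N$ (to be taken large at the end) and apply this property to the sequence $(x_n)_{n\in\M}$, which lies in $B_X$, with $2k$ terms: there is an infinite $\M'\subseteq\M$ such that $\|\sum_{j=1}^{2k}(-1)^jx_{n_j}\|\le C'(2k)^{1/p}$ for all $n_1<\cdots<n_{2k}$ in $\M'$. Choosing any such tuple in $\M'\subseteq\M$ and combining the two inequalities yields
\[\frac{\theta}{2C}\,k^{1-1/q}\le 2^{1/p}C'\,k^{1/p}.\]
Because $q>p/(p-1)$ is equivalent to $1-1/q>1/p$, the left-hand side grows strictly faster than the right-hand side, so this fails once $k$ is large enough, a contradiction; hence $X$ is reflexive. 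The only mildly delicate points are the separable reduction and the extraction of the cluster point $x^{**}$ with $d(x^{**},X)>0$; the rest is a direct combination of Theorem~\ref{clusterpoint} with the definition of the $p$-alternating Banach-Saks property (applied on the nested index set $\M'\subseteq\M$ so that both estimates hold for the same indices), with the exponent comparison $1-1/q>1/p$ doing the real work.
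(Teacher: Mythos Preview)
Your proof is correct and follows essentially the same route as the paper: combine the lower bound from Theorem~\ref{clusterpoint} with the upper bound from the $p$-alternating Banach-Saks property and exploit $1-1/q>1/p$. The only cosmetic difference is the order of extraction (you apply Theorem~\ref{clusterpoint} once to get a single $\M$ valid for all $k$ and then refine via Banach--Saks for each $k$, whereas the paper fixes $k$ first and extracts in the opposite order), and your separable reduction is spelled out in more detail than the paper's one-line appeal to ``reflexivity is separably determined''.
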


\begin{proof}
Since reflexivity is separably determined, assume that $X$ is separable. Let $C\geq 1$ be so that $X$ has both the   $p$-alternating Banach-Saks property and  property $\Q_{q}$ with constant $C$. Suppose $X$ is not reflexive and pick $x^{**}\in B_{X^{**}}\setminus X$, so that $d(x^{**},X)>0$. Let $(x_n)_n$ be a sequence in $B_X$ with $x^{**}$ as a weak$^*$ cluster point.

Fix $k\in\N$. Since $X$ has  the  $p$-alternating Banach-Saks property with constant $C$, by going to a subsequence, we can assume that
\[\Big\|\sum_{j=1}^{2k}(-1)^jx_{n_j}\Big\|\leq 2^{1/p}C k^{1/p},\]
for all  $n_1<\ldots<n_{2k}\in\N$. Since $X$ has property $\Q_{q}$ with constant $C$ the previous theorem tells us that, by going to a subsequence, we can assume that
\[\Big\|\sum_{j=1}^{2k}(-1)^jx_{n_j}\Big\|\geq \frac{1}{2C}k^{1-1/q}d(x^{**},X),\]
for all $n_1<\ldots<n_{2k}\in\N$.

As $k$ was arbitrary, this shows that
\[\frac{1}{2C}k^{1-1/q}d(x^{**},X)\leq 2^{1/p}C k^{1/p}\]
for all $k\in\N$. As $1-1/q>1/p$, this gives us a contradiction.
\end{proof}

As another application of Theorem~\ref{clusterpoint}, we can show that Corollary \ref{cor:JamesQ_p} is optimal.

\begin{corollary}\label{optimalJamesQp} Let $p$ in $(1,\infty)$ and $p'$ be its conjugate exponent. Then, for any $r>p'$, $\J_p$ fails property $\Q_r$ and for any $s>p$, $\J_p^*$ fails property $Q_s$.
\end{corollary}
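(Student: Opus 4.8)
The plan is to exhibit, for the stated exponents, explicit Lipschitz maps on the interlaced graphs into $\J_p$ (resp.\ $\J_p^*$) that violate the concentration inequality defining property $\Q_r$ (resp.\ $\Q_s$). The natural route is via Theorem~\ref{clusterpoint}: if $\J_p$ had property $\Q_r$, then applying the theorem to a suitable bounded sequence $(x_n)_n$ in $\J_p$ whose weak$^*$ cluster point $x^{**}$ lies strictly outside $\J_p$ would force a lower bound of order $k^{1-1/r}$ on the alternating sums $\|\sum_{j=1}^{2k}(-1)^jx_{n_j}\|$, and we would reach a contradiction by producing a sequence for which these alternating sums grow strictly slower than $k^{1-1/r}$.

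\begin{proof}
We treat $\J_p$; the argument for $\J_p^*$ is analogous, using duality of bases.

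Suppose, for contradiction, that $\J_p$ has property $\Q_r$ for some $r>p'$ (where $1/p+1/p'=1$), say with constant $C\ge 1$. Consider the summing basis direction: let $x_n=e_1+e_2+\cdots+e_n\in\J_p$, that is, $x_n(j)=1$ for $j\le n$ and $0$ otherwise. Strictly speaking $x_n\notin\J_p$ since it does not tend to $0$; instead take $x_n=\sum_{j=1}^n e_j$ viewed inside $\J_p^{**}$, or, better, work with the genuine vectors $u_n=\sum_{j=1}^n e_j - \frac{1}{n+1}\sum_{j=1}^{n+1}e_j$-type corrections that do lie in $\J_p$ and have uniformly bounded $\J_p$-norm; the point is that the sequence $(x_n)_n$ is bounded in $\J_p^{**}$, its weak$^*$ cluster point is $e=(1,1,1,\dots)$, and $d(e,\J_p)>0$ because $\J_p^{**}=\J_p\oplus\R e$. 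So $\theta:=d(x^{**},\J_p)>0$ with $x^{**}=e$.

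Now compute the alternating sums directly in the $p$-variation norm. For $n_1<\cdots<n_{2k}$, the vector $\sum_{j=1}^{2k}(-1)^j x_{n_j}$ has, as a sequence, values that are constant on the blocks between consecutive $n_j$'s and take values in a set of size $O(1)$ (the partial alternating sums $0,-1,0,-1,\dots$ up to a bounded shift). Hence its $p$-variation involves at most $2k$ jumps each of size $O(1)$, giving $\|\sum_{j=1}^{2k}(-1)^jx_{n_j}\|_{\J_p}\le C'k^{1/p}$ for an absolute constant $C'$. On the other hand, Theorem~\ref{clusterpoint} applied with $\eps=1/2$ yields, after passing to a subsequence, an infinite $\M$ with
\[
\Big\|\sum_{j=1}^{2k}(-1)^jx_{n_j}\Big\|\ge \frac{1}{2C}\,\theta\, k^{1-1/r}
\]
for all $k$ and all $n_1<\cdots<n_{2k}$ in $\M$. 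Combining the two bounds gives $\frac{\theta}{2C}k^{1-1/r}\le C'k^{1/p}$ for all $k\in\N$. Since $r>p'$ means $1-1/r>1/p$, letting $k\to\infty$ is a contradiction. Hence $\J_p$ fails property $\Q_r$.

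For $\J_p^*$ and $s>p$: the dual $\J_p^*$ contains an isometric copy of the relevant summing-type structure for exponent $p'$ (indeed $\J_p^*$ behaves, from the point of view of alternating sums of the coordinate functionals $(e_n^*)_n$, like a space with $p'$-variation geometry, and $\J_p^{*}$ is not reflexive so $\J_p^{**}$-valued cluster points exist at positive distance from $\J_p^*$). Running the same scheme: a bounded sequence $(\xi_n)_n$ of partial sums of $(e_n^*)_n$ in $\J_p^{**}{}^{*}$-terms has a weak$^*$ cluster point at positive distance $\theta'>0$ from $\J_p^*$, its alternating sums are $O(k^{1/p'})$ by the norm formula for $\J_p^*$, while Theorem~\ref{clusterpoint} (under the assumption that $\J_p^*$ has $\Q_s$ with constant $C$) forces them to be $\ge \frac{\theta'}{2C}k^{1-1/s}$. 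Since $s>p$ is equivalent to $1-1/s>1/p'$, this is again a contradiction, so $\J_p^*$ fails property $\Q_s$.
\end{proof}

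\begin{remark}
The main obstacle is the bookkeeping for the two growth estimates of the alternating sums: one must check carefully that (i) the chosen bounded sequence really has a weak$^*$ cluster point at strictly positive distance from the space (using the explicit description $\J_p^{**}=\J_p\oplus\R e$, and the analogous fact for $\J_p^*$), and (ii) the $p$-variation (resp.\ the $\J_p^*$-norm) of $\sum_{j=1}^{2k}(-1)^jx_{n_j}$ is genuinely $O(k^{1/p})$ (resp.\ $O(k^{1/p'})$) and not larger --- this is where the precise norm formula of the James space is used. Once these two estimates are in place, the contradiction with the $k^{1-1/r}$ (resp.\ $k^{1-1/s}$) lower bound of Theorem~\ref{clusterpoint} is immediate from $r>p'$ (resp.\ $s>p$).
\end{remark}
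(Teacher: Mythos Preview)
Your overall strategy---apply Theorem~\ref{clusterpoint} to a bounded sequence whose weak$^*$ cluster point sits at positive distance from the space, then contradict the resulting $k^{1-1/r}$ lower bound with an explicit $k^{1/p}$ upper bound on the alternating sums---is exactly the paper's approach. But there are two points where your execution goes wrong.

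\textbf{The $\J_p$ part: a spurious worry.} You write that $x_n=\sum_{j=1}^n e_j$ ``strictly speaking does not lie in $\J_p$''. It does. The element $x_n$ is the sequence $(1,\ldots,1,0,0,\ldots)$ with $n$ ones; it tends to $0$ coordinatewise and has $p$-variation $1$, so $x_n\in\J_p$ with $\|x_n\|_{\J_p}=1$. What fails to lie in $\J_p$ is the weak$^*$ limit $e=(1,1,\ldots)\in\J_p^{**}$, and that is precisely what you want. So drop the correction vectors $u_n$ entirely: the sequence $(x_n)$ already satisfies the hypotheses of Theorem~\ref{clusterpoint}, and your $O(k^{1/p})$ estimate for the alternating sums is correct.

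\textbf{The $\J_p^*$ part: a genuine gap.} You propose to use ``partial sums of $(e_n^*)_n$''. These are not bounded: testing $\xi_n=\sum_{i=1}^n e_i^*$ against $x=(1,\ldots,1,0,\ldots)\in\J_p$ (with $n$ ones, $\|x\|_{\J_p}=1$) gives $\langle\xi_n,x\rangle=n$, so $\|\xi_n\|_{\J_p^*}\ge n$. Hence Theorem~\ref{clusterpoint} does not apply to $(\xi_n)$. The paper instead uses the sequence $(e_n^*)_n$ itself, which is bounded (the basis $(e_n)$ is monotone) and weak$^*$-converges in $\J_p^{***}$ to the ``limit'' functional $\lambda\notin\J_p^*$. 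The alternating-sum bound is then obtained by a direct duality computation: for $x\in\J_p$,
\[
\Big|\Big\langle \sum_{j=1}^{2k}(-1)^j e_{n_j}^*,\,x\Big\rangle\Big|
\le \sum_{j=1}^k |x(n_{2j})-x(n_{2j-1})|
\le k^{1/p'}\|x\|_{\J_p},
\]
so $\big\|\sum_{j=1}^{2k}(-1)^j e_{n_j}^*\big\|_{\J_p^*}\le k^{1/p'}$. With this choice the contradiction with $k^{1-1/s}$ (for $s>p$, i.e.\ $1-1/s>1/p'$) goes through exactly as you outline.
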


\begin{proof} We follow the proof of Proposition 4.7 in \cite{Kalton2007Quarterly}.

First, consider in $\J_p$ the sequence $(x_n)_n$ given by $x_n=\sum_{i=1}^ne_i$ for all $n\in\N$. We have that $(x_n)_n$ converges weak$^*$ to $e\in \J_p^{**}\setminus \J_p$. However, it is easy to see that there exists $C>0$ such that for any $n_1<\cdots<n_{2k}$, we have:
$$\Big\|\sum_{i=1}^{2k} (-1)^jx_{n_j}\Big\|_{\J_p}\le Ck^{1/p}.$$
For $r>p'$, according to Theorem~\ref{clusterpoint}, this prevents $\J_p$ from having property $\Q_r$.

We now consider in $\J_p^*$ the sequence $(e^*_n)_n$ which is weak$^*$-converging to an element $\lambda \in \J_p^{***}\setminus \J_p^*$, which is just the functional assigning its limit to any sequence of bounded $p$-variation. For $x\in \J_p$, we have
$$\Big|\langle \sum_{i=1}^{2k} (-1)^je^*_{n_j},x\rangle\Big| \le \sum_{j=1}^k|x(n_{2j})-x(n_{2j-1})|\le k^{1/p'}\|x\|_{\J_p}.$$
It follows that $\|\sum_{i=1}^{2k} (-1)^je^*_{n_j}\|_{\J_p^*}\le k^{1/p'}$. We then deduce from  Theorem~\ref{clusterpoint} that $\J_p^*$ fails property $Q_s$ for all $s>p$.
\end{proof}

Feeding Corollaries \ref{cor:JamesQ_p} and \ref{optimalJamesQp} into Proposition~\ref{PropositionPropQp}~(ii) we get information on some compression exponents of $\J_q$ in $\J_p$ or $\J_q^*$ in $\J_p^*$.
More precisely we have.
\begin{proposition} Let $p,q$ be in $(1,\infty)$ and $p',q'$ be their respective conjugate exponents.
\begin{enumerate}[(1)]
\item If $p<q$, then $\alpha_{\J_p}(\J_q)\le \frac{q'}{p'}$.
\item If $p>q$, then $\alpha_{\J_p^*}(\J_q^*)\le \frac{q}{p}$.
\end{enumerate}
\end{proposition}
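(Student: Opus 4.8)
The plan is to combine the results on property $\mathcal{Q}_p$ for the James spaces with the behaviour of property $\mathcal{Q}_p$ under coarse Lipschitz maps encoded in Proposition~\ref{PropositionPropQp}~(ii), which asserts that if $\alpha_X(Y) = \alpha$ then $Y$ has property $\mathcal{Q}_{p(\alpha-\varepsilon)}$ for every $\varepsilon > 0$ whenever $X$ has property $\mathcal{Q}_p$. So the whole argument is essentially bookkeeping with conjugate exponents.

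For item (1): suppose $p < q$. By Corollary~\ref{cor:JamesQ_p}, $\J_p$ has property $\mathcal{Q}_{p'}$. Set $\alpha := \alpha_{\J_p}(\J_q)$; then by Proposition~\ref{PropositionPropQp}~(ii), $\J_q$ has property $\mathcal{Q}_{p'(\alpha-\varepsilon)}$ for every $\varepsilon > 0$. On the other hand, Corollary~\ref{optimalJamesQp} tells us that $\J_q$ fails property $\mathcal{Q}_r$ for every $r > q'$. Since $p < q$, we have $q' < p'$, so $p'(\alpha - \varepsilon) < p'$ is not automatically below $q'$; the point is that $\J_q$ \emph{having} property $\mathcal{Q}_{p'(\alpha-\varepsilon)}$ forces $p'(\alpha-\varepsilon) \le q'$, i.e.\ $\alpha - \varepsilon \le q'/p'$. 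Letting $\varepsilon \to 0$ yields $\alpha \le q'/p'$, which is exactly the claimed bound $\alpha_{\J_p}(\J_q) \le q'/p'$.

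For item (2): suppose $p > q$. By Corollary~\ref{cor:JamesQ_p}, $\J_p^*$ has property $\mathcal{Q}_p$. Set $\alpha := \alpha_{\J_p^*}(\J_q^*)$; Proposition~\ref{PropositionPropQp}~(ii) gives that $\J_q^*$ has property $\mathcal{Q}_{p(\alpha-\varepsilon)}$ for every $\varepsilon>0$. Corollary~\ref{optimalJamesQp} says $\J_q^*$ fails property $\mathcal{Q}_s$ for every $s > q$, so $\J_q^*$ possessing property $\mathcal{Q}_{p(\alpha-\varepsilon)}$ forces $p(\alpha-\varepsilon) \le q$, hence $\alpha - \varepsilon \le q/p$. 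Letting $\varepsilon \to 0$ gives $\alpha_{\J_p^*}(\J_q^*) \le q/p$.

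There is no real obstacle here beyond making sure the implications go the right way: one has to use that failing $\mathcal{Q}_r$ for all $r$ above a threshold is equivalent to the supremum of the exponents $r$ for which $\mathcal{Q}_r$ holds being at most that threshold, and that $\mathcal{Q}_p$ implies $\mathcal{Q}_q$ for $q < p$ (stated right after Definition~\ref{PropertyQp}). A minor subtlety is whether $\alpha_X(Y)$ could be such that $\alpha - \varepsilon$ is negative, in which case $\mathcal{Q}_{p(\alpha-\varepsilon)}$ is vacuous and gives nothing — but the desired inequalities $q'/p' > 0$ and $q/p > 0$ are trivially satisfied in that regime, so this causes no problem. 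I would keep the write-up to a few lines invoking Corollaries~\ref{cor:JamesQ_p} and \ref{optimalJamesQp} and Proposition~\ref{PropositionPropQp}~(ii), exactly as the sentence preceding the statement already advertises.
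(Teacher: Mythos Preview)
Your proposal is correct and follows exactly the route the paper advertises in the sentence preceding the statement: combine Corollaries~\ref{cor:JamesQ_p} and~\ref{optimalJamesQp} with Proposition~\ref{PropositionPropQp}~(ii). The paper gives no further details, and your write-up supplies precisely the bookkeeping one would expect; the only cosmetic point is that the edge case you flag (small $\alpha$) is more cleanly handled by arguing by contradiction from $\alpha>q'/p'$ (resp.\ $\alpha>q/p$), which guarantees $p'(\alpha-\varepsilon)>q'>1$ so that $\mathcal{Q}_{p'(\alpha-\varepsilon)}$ is defined.
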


%We need to make few remarks.
Estimates on the compression exponents for the ``other half'' of the values of $p$ and $q$ are already known (see \cite{Netillard2016} or \cite{LancienRaja2017}). They are based on concentration properties for Lipschitz maps defined on the Hamming graphs with values in quasi-reflexive $p$-AUS spaces. When one wants to use asymptotic convexity as an obstruction for coarse Lipschitz embeddings, it is customary to use the so-called approximate midpoint principle (see for instance \cite{KaltonRandrianarivony2008}). However this method, as far as we know, only allows to show the impossibility of a coarse Lipschitz embedding, but does not provide extra information on the compression modulus. In fact, this method was used by F. Netillard \cite{Netillard2016} to prove that for $p<q$, $\J_q$ does not coarse Lipschitz embed in $\J_p$ and that for $p>q$, $\J_q^*$ does not coarse Lipschitz embed in $\J_p^*$. Our last corollary is an improvement of these results. This indicates that Theorem~\ref{ThmAUCandPropQp} can serve as an alternative to the approximate midpoint principle, but only in a non reflexive setting.

%------------------------------------------------------------------
\section{Concentration properties and Szlenk indices}\label{SectionConcentrationSzlenk}

In this section, we obtain obstructions to the embeddability of Kalton's graphs into some dual Banach spaces.

\begin{theorem}\label{summable}
Let $X$ be a Banach space with summable Szlenk index. Then $X^*$ has property $\mathcal{Q}$.
\end{theorem}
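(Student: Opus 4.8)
The plan is to use the structural reductions from Subsection~\ref{SubsectionGenProp} together with the summability hypothesis to control the "levels" $K_1,\dots,K_k$ appearing in Proposition~\ref{summarize}, and then deduce the desired uniform bound $\|f(\bar n)-f(\bar m)\|\le C\omega_f(1)$. First I would make the standard separable reduction via Proposition~\ref{toutcon}: since a map $f:([\N]^k,d_{\K})\to X^*$ has separable range, its closed linear span embeds isometrically into $Y^*$ for some separable subspace $Y\subseteq X$, and summability of the Szlenk index passes to $Y$ (a subspace of a space with summable Szlenk index again has summable Szlenk index, with the same constant $c$). So we may assume $X$ is separable and work directly in $X^*$.

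Next, normalize $f$ to be $1$-Lipschitz, fix a small $\eps>0$, and apply Proposition~\ref{summarize} to obtain $\M\in[\N]^\omega$, a weak$^*$-null tree $(x^*(\m))_{\m\in[\M]^{\le k}}$, constants $K_1,\dots,K_k\in[0,1]$, and the associated weak$^*$-continuous tree $(z^*(\n))_{\n\in[\N]^{\le k}}$ in $B_{X^*}$, with $\|y^*(n_1,\dots,n_i)\|\ge K_i/2$ for all $i$ and all $(n_1,\dots,n_i)\in[\N]^i$. The key point is that the differences along the branches of $(z^*(\n))_{\n}$ are uniformly bounded below at each level by $K_i/2$; combining the weak$^*$-continuity with Proposition~\ref{Szlenk-trees}, after passing to a further infinite subset (via Ramsey, as in Lemma~\ref{Ramsey}, to ensure the relevant $\limsup$'s are realized along a tail), we get that $z^*_\emptyset\in s_{K_k/2-\eps}\cdots s_{K_1/2-\eps}(B_{X^*})$ — more precisely, one only keeps the indices $i$ for which $K_i>0$, but since $s_\delta(K)=K$ for $\delta$ arbitrarily small this is harmless. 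In particular this iterated Szlenk set is nonempty, so the summability hypothesis forces $\sum_{i=1}^k (K_i/2-\eps)\le c$, hence $\sum_{i=1}^k K_i\le 2c+2k\eps$.

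Finally, for any $\bar n,\bar m\in[\M]^k$, items (i)--(iii) of Proposition~\ref{summarize} give
\[\|f(\bar n)-f(\bar m)\|=\Big\|\sum_{i=1}^k x^*(n_1,\dots,n_i)-\sum_{i=1}^k x^*(m_1,\dots,m_i)\Big\|\le 2\sum_{i=1}^k(K_i+\eps)\le 4c+6k\eps.\]
This last bound still depends on $k$, so a naive choice of $\eps$ is not enough: the correct move is to choose $\eps$ depending on $k$ at the outset, say $\eps=\eps_k$ small enough that $6k\eps_k\le 1$, which is legitimate since $\eps$ in Proposition~\ref{summarize} may be taken as small as we wish for each fixed $k$. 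Then $\|f(\bar n)-f(\bar m)\|\le 4c+1$ for all $\bar n,\bar m\in[\M]^k$, uniformly in $k$, which is exactly property~$\mathcal{Q}$ with constant $C=4c+1$ (after scaling back by $\omega_f(1)$). The main obstacle I anticipate is the bookkeeping in the second step — namely verifying carefully, via Proposition~\ref{Szlenk-trees}, that the weak$^*$-continuous tree produced in Proposition~\ref{summarize} genuinely certifies membership of $z^*_\emptyset$ in the iterated Szlenk derivative with the levels $K_i/2$ (handling the vanishing levels and the passage to a tail where the $\limsup$ lower bounds hold along the whole subset), since the summability definition is stated for the sets $s_{\eps_n}\cdots s_{\eps_1}(B_{X^*})$ and one must match the indexing and the relevant $\eps_i$'s correctly.
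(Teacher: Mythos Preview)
Your proposal is correct and follows essentially the same route as the paper's proof: separable reduction via Proposition~\ref{toutcon}, apply Proposition~\ref{summarize} to obtain the constants $K_1,\dots,K_k$ and the weak$^*$-continuous tree $(z^*(\n))$, invoke Proposition~\ref{Szlenk-trees} to place $z^*_\emptyset=0$ in the iterated Szlenk derivative, use summability to bound $\sum K_i$, and conclude via items (ii)--(iii). Your anticipated ``obstacle'' is not one: item~(iv) of Proposition~\ref{summarize} gives the \emph{pointwise} lower bound $\|y^*(n_1,\dots,n_i)\|\ge K_i/2$ for every node, so the $\limsup$ hypothesis of Proposition~\ref{Szlenk-trees} is satisfied with the constants $K_i/2$ directly---no further Ramsey extraction and no $-\eps$ shaving are needed, and vanishing levels are harmless since $s_0(K)=K$.
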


\begin{proof} Assume, as it is allowed by Proposition \ref{toutcon}, that $X$ is separable. Let $f:([\N]^k,d_{\K})\to X^*$ be a 1-Lipschitz map, fix $\eps\in (0,\frac{1}{2k})$.
Let $\M\in [\N]^\omega$ and $K_1,\ldots,K_k$ be given by Proposition~\ref{summarize}.
Then it clearly follows from item $(iv)$ and Proposition~\ref{Szlenk-trees} that
$$0 \in s_{\frac{K_1}{2}}\ldots s_{\frac{K_k}{2}} (B_{X^*}).$$
Since the Szlenk index of $X$ is summable, we deduce that $\sum_{i=1}^k K_i \le 2C$, where $C$ is the ``summable Szlenk index constant'' of $X$.
Then, we deduce from items $(ii)$ and $(iii)$ of Proposition~\ref{summarize} that
$$\diam (f([\M]^k) \le 2\sum_{i=1}^k K_i+2k\eps \le 4C+1.$$
\end{proof}

\begin{remark}
Note  that Theorem~\ref{ThmAUCandPropQp} insures that if $X$ admits an equivalent norm whose dual norm is $1$-AUC$^*$ then $X^*$ has property $\mathcal{Q}$. It is known \cite{GodefroyKaltonLancien2000} that a separable Banach space admits an equivalent norm whose dual norm is $1$-AUC$^*$ if and only if $X$ is isomorphic to a subspace of $c_0$. It is an easy exercise to check that any subspace of $c_0$ has a summable Szlenk index. However, there are Banach spaces with summable Szlenk index that do not linearly embed into $c_0$. Before  describing a few of them, let us mention that a Banach space has a summable Szlenk index if and only if it is \emph{asymptotic-$c_0$} (see \cite{Causey2018}*{Theorem 4.1}). The original Tsirelson space, now denoted $T^*$, is an example of a reflexive asymptotic-$c_0$ space. Let us also mention that there exists a non reflexive quasi-reflexive Banach space which is asymptotic-$c_0$ (see Section 7 in \cite{BaudierLancienMotakisSchlumprecht2018} and references therein). In conclusion, Theorem~\ref{summable} applies to spaces that are not covered by N. Kalton's work nor by our Theorem~\ref{ThmAUCandPropQp}.
\end{remark}

\begin{theorem}\label{Szlenk} Let $X$ be a Banach space. If the family of Kalton's interlaced graphs $(([\N]^{<k},d_{\K}))_{k\in \N}$ equi-coarse Lipschitz embeds into $X^*$, then $\Sz(X) > \omega$.
\end{theorem}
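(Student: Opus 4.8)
The plan is to argue by contradiction: suppose $\Sz(X) \le \omega$ while the family $(([\N]^{k}, \dK))_k$ equi-coarse Lipschitz embeds into $X^*$. By Remark~\ref{RemarkEquiCoarseLip} we may replace ``equi-coarse Lipschitz'' by ``equi-Lipschitz'', so there are $A, B > 0$ and maps $f_k \colon ([\N]^k, \dK) \to X^*$ with $A\, \dK(\n,\m) \le \|f_k(\n) - f_k(\m)\| \le B\, \dK(\n,\m)$ for all $\n,\m \in [\N]^k$ and all $k$. After the separable reduction of Proposition~\ref{toutcon} applied to the (countable) union of images, we may assume $X$ is separable, hence $\Sz(X) = \omega$ forces, via Theorem~\ref{SzlenkAUS}, that $X$ admits an equivalent $r$-AUS norm for some $r \in (1,\infty)$; dually, $X^*$ admits an equivalent $s$-AUC$^*$ norm with $s = r'$ the conjugate exponent. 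Fix such a renorming; the bi-Lipschitz bounds on $f_k$ only change by the equivalence constants of the norms.

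Next I would run each $f_k$ through the structural reductions of Proposition~\ref{summarize}: for fixed $\eps > 0$ we get $\M \in [\N]^\omega$, a weak$^*$-null tree $(x^*(\m))_{\m \in [\M]^{\le k}}$ with $\|x^*(\m)\| \le \Lip(f_k) \le B$, constants $K_1, \dots, K_k \in [0, B]$ with $K_i \le \|x^*(n_1,\dots,n_i)\| \le K_i + \eps$, and $f_k(\n) = \sum_{\m \preceq \n} x^*(\m)$ on $[\M]^k$. The lower bound is the key new ingredient: if $\n, \m \in [\M]^k$ are \emph{fully interlaced}, i.e.\ $n_1 \le m_1 \le n_2 \le \dots \le n_k \le m_k$, then $\dK(\n,\m) = k$ (the segment $[\,1, n_k\,]$ or similar witnesses this in Proposition~\ref{p:KaltonDistanceFormula}), so $\|f_k(\n) - f_k(\m)\| \ge A k$. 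On the other hand $f_k(\n) - f_k(\m) = \sum_{i=1}^k \big(x^*(n_1,\dots,n_i) - x^*(m_1,\dots,m_i)\big)$, and applying Proposition~\ref{qAUC-trees} to the interleaved weak$^*$-null tree $u^*$ (exactly as in the proof of Theorem~\ref{ThmAUCandPropQp}) after a suitable further extraction, we get the upper $\ell_s$ estimate
\[
(Ak)^s \le \|f_k(\n) - f_k(\m)\|^s = \Big\| \sum_i u^*(\cdot) \Big\|^s \le \frac{1}{c} \sum_{i=1}^{k} \big( \|x^*(n_1,\dots,n_i)\|^s + \|x^*(m_1,\dots,m_i)\|^s \big) \le \frac{2}{c} \sum_{i=1}^k (K_i + \eps)^s,
\]
where $c > 0$ depends only on the $s$-AUC$^*$ modulus. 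Meanwhile the \emph{upper} Lipschitz bound applied to two elements of $[\M]^k$ at $\dK$-distance $1$, combined with item~(iii), gives $K_i \le B + \eps$ uniformly — but that alone is not enough; what we actually need is that the $K_i$ cannot all be bounded below, which we now extract from the quasi-reflexive / Szlenk side.

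The point is that the two estimates are in tension once $k$ is large. From the display above, $\sum_{i=1}^k (K_i+\eps)^s \ge \tfrac{c}{2} A^s k^s$, so on average $(K_i + \eps)^s \gtrsim k^{s-1}$, i.e.\ some $K_i$ is of order $k^{(s-1)/s} = k^{1/r'} \cdot$(const) — in particular $\max_i K_i \to \infty$ with $k$. But $K_i \le \Lip(f_k) \le B$ for all $i$ and all $k$, a contradiction as soon as $k$ is chosen large enough (and $\eps$ small). Wait — this shows the family cannot even equi-Lipschitz embed into \emph{any} $s$-AUC$^*$ dual space with $s < \infty$, which recovers more than claimed; the subtlety is that $\Sz(X) = \omega$ only gives $s < \infty$ after renorming, and the renorming is exactly what Theorem~\ref{SzlenkAUS} supplies. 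So the logical skeleton is: $\Sz(X) \le \omega \Rightarrow$ ($X$ separable WLOG) $\Rightarrow$ $X^*$ is $s$-AUC$^*$ for some finite $s$ $\Rightarrow$ the averaging contradiction above $\Rightarrow$ the family does not equi-Lipschitz (hence not equi-coarse Lipschitz) embed into $X^*$. \textbf{The main obstacle} I anticipate is making the application of Proposition~\ref{qAUC-trees} uniform across all $k$ simultaneously and checking that a \emph{single} extraction $\M$ (or a diagonal one) can be chosen so that the fully-interlaced pair realizing distance $k$ also sits inside the subset on which the $\ell_s$-lower-tree-estimate holds — this requires interleaving the Ramsey extractions of Proposition~\ref{summarize} with the extraction inside Proposition~\ref{qAUC-trees}, and being careful that the AUC$^*$ constant $c$ does not degrade with $k$ (it does not, since it is a property of the fixed renormed dual norm). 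A secondary, purely bookkeeping, point is to verify that distance between consecutive (non-strictly-interlaced) elements of $[\M]^k$ is $1$ so that $\Lip(f_k) = \omega_{f_k}(1) \le B$ really does bound every $\|x^*(\m)\|$, which is where the monotonicity in Proposition~\ref{w*nulltree} is used.
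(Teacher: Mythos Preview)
Your high-level strategy --- reduce to separable $X$, invoke the Knaust--Odell--Schlumprecht renorming (Theorem~\ref{SzlenkAUS}) to obtain an $s$-AUC$^*$ dual norm, then derive a contradiction from the tree estimates --- is viable, and the paper itself remarks (just after its proof) that Theorem~\ref{Szlenk} can be deduced from Theorem~\ref{ThmAUCandPropQp} plus this renorming. But your execution contains two genuine errors that happen to cancel in the displayed chain of inequalities.

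First, if $\n,\m\in[\M]^k$ are interlaced ($n_1\le m_1\le n_2\le\dots\le n_k\le m_k$) then $\dK(\n,\m)=1$, not $k$: for any segment $S$ the alternation forces $\big||\n\cap S|-|\m\cap S|\big|\le 1$, and this is exactly why such pairs are declared adjacent. It is the \emph{separated} pairs $\n<\m$ (i.e.\ $n_k<m_1$) that realise distance $k$. Second, Proposition~\ref{qAUC-trees} gives a \emph{lower} $\ell_s$-estimate $\big\|\sum x^*(\m)\big\|^s\ge c\sum\|x^*(\m)\|^s$, not the upper estimate $\le\tfrac1c\sum$ that you wrote. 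With both signs corrected your display collapses.

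The repaired argument is precisely the proof of Theorem~\ref{ThmAUCandPropQp}: take $\n,\m$ interlaced so that $\|f_k(\n)-f_k(\m)\|\le B$ by the upper Lipschitz bound; the lower $\ell_s$-estimate from Proposition~\ref{qAUC-trees} then forces $\sum_i K_i^s\le B^s/(2c)$, whence H\"older gives $\sum_i K_i\le (B^s/2c)^{1/s}\,k^{1/r}$ with $\tfrac1r+\tfrac1s=1$. Thus $\mathrm{diam}\,f_k([\M]^k)\le 2\sum_iK_i+2k\eps=O(k^{1/r})$, while a separated pair in $[\M]^k$ forces the diameter to be at least $Ak$; contradiction for large $k$. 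Your worry about synchronising extractions is harmless: Proposition~\ref{qAUC-trees} already returns a full $\M$ on which every branch satisfies the estimate, so any strictly interlaced pair in $[\M]^k$ works.

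The paper deliberately takes a different, self-contained route that avoids the deep renorming theorem. From $2\sum_iK_i\ge Ak-2k\eps$ (triangle inequality applied to a separated pair) it counts at least $Ak/8$ indices $i$ with $K_i>A/8$; item~(iv) of Proposition~\ref{summarize} together with Proposition~\ref{Szlenk-trees} then yields $0\in s_{A/16}^{N}(B_{X^*})$ with $N\ge Ak/8$, hence $\Sz(X,A/16)\ge Ak/8$ for every $k$. This is elementary and produces an explicit quantitative lower bound on the $\eps$-Szlenk index directly from the embedding constants, whereas your route trades elementarity for the black box of Theorem~\ref{SzlenkAUS}.
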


\begin{proof}
We may assume again  that $X$ is separable (Proposition ~\ref{toutcon}). By Remark~\ref{RemarkEquiCoarseLip}, we can also assume that  $(([\N]^{<k},d_{\K}))_{k\in \N}$ equi-Lipschitz embeds into $X^*$. Hence, without loss of generality,    we may assume  that there exists $A\in (0,1]$ so that for any $k\in \N$ there exists $f_k : ([\N]^k,d_{\K}) \to X^*$ such that
$$\forall \n,\m \in [\N]^k,\ \ Ad_{\K}(\n,\m)\le \|f(\n)-f(\m)\| \le d_{\K}(\n,\m).$$
Let $\n < \m \in \Nk$ (that is such that $n_k<m_1$). By the triangle inequality we have
$$ \Big\| \sum_{\bar s \preceq\n} x^*(\bar s)   \Big\|  + \Big\| \sum_{\bar s \preceq \m} x^*(\bar s)   \Big\| \geq \|f(\n)- f(\m)\| \geq Ak.$$
For a fixed $k$ and a given $\eps>0\in (0,\frac{A}{4})$, we consider $\M\in [\N]^\omega$ given by Proposition~\ref{summarize}. It then follows from item $(iii)$ that
$2\sum_{i=1}^{k} K_i \geq Ak-2k\eps$.
Now, if we denote $I=\{1, \ldots,k\}$, $I_1 = \{ i\in I,\ K_i > \frac{A}{8}\} $ and  $N = |I_1|$, we have that
$$\frac{Ak}{2}-k\eps \leq \sum_{i=1}^{k} K_i = \sum_{I \setminus I_1} K_i + \sum_{I_1} K_i \leq \frac{A}{8}k + N.$$
From our choice of $\eps$, it follows that $N \geq \frac{Ak}{8}$.
Finally, we deduce from item $(iv)$ in Proposition~\ref{summarize} and Proposition~\ref{Szlenk-trees} that
$$0\in s_{\frac{A}{16}}^N(B_X^*)$$
and therefore that $\Sz(X,\frac{A}{16}) \geq \frac{Ak}{8}$. Since $k$ was arbitrary, this concludes our proof.
\end{proof}

\begin{remark}
As it is recalled in the introduction, a Banach space $X$ admits an equivalent AUS norm if and only if $\Sz(X)\le \omega$ and in that case there exists $p\in (1,\infty)$ such that $X$ admits an equivalent $p$-AUS norm. Therefore Theorem~\ref{ThmAUCandPropQp} is a quantitative version of Theorem~\ref{Szlenk}. In fact Theorem~\ref{Szlenk} is a consequence of Theorem~\ref{ThmAUCandPropQp} and these deep renorming results. We have chosen to present here an independent, self contained elementary proof.
\end{remark}

Let us now say that a Banach space $X$ has \emph{proportional Szlenk index} if there exists $C>0$ such that for all $\eps>0$, $\Sz(X,\eps) \le \frac{C}{\ep}$.
It is clear that a Banach space with summable Szlenk index has proportional Szlenk index. To the best of our knowledge, whether the converse implication is true is an open problem. We do not know either if the dual of a Banach space with a proportional Szlenk index has property $\mathcal{Q}$, but we can prove the following weaker concentration estimate.

\begin{proposition} Let $X$ be a Banach space with proportional Szlenk index.
Then, there exists $M>0$ such that for any $k\in \N$ and every Lipschitz map $f \colon ([\N]^k,d_{\K}) \to X^*$, there exists an infinite subset $\M$ of $\N$ such that:
$$\forall \, \overline{n},\overline{m} \in [\M]^k,\  \|f(\overline{n})-f(\overline{m})\| \leq  M(1+\log k)\Lip(f).$$
\end{proposition}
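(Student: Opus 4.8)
The plan is to run the argument used for Theorem~\ref{summable}, but to replace summability of the Szlenk index by the proportionality hypothesis and then to bound the resulting quantity $\sum_{i=1}^k K_i$ by a harmonic‑series estimate. First I would reduce, via Proposition~\ref{toutcon}, to the case where $X$ is separable, and, by homogeneity, to the case where $f$ is $1$‑Lipschitz (the general statement follows upon multiplying by $\Lip(f)$). Fix $\eps>0$, to be chosen of order $1/k$ at the very end, and apply Proposition~\ref{summarize} to obtain $\M\in[\N]^\omega$, the weak$^*$‑null tree $(x^*(\m))_{\m\in[\M]^{\le k}}$, constants $K_1,\dots,K_k\in[0,1]$, and the weak$^*$‑continuous tree $(z^*(\n))_{\n\in[\N]^{\le k}}$ in $B_{X^*}$ with $z^*_\emptyset=0$ and $\|y^*(n_1,\dots,n_i)\|\ge K_i/2$ for every $i$ and every $(n_1,\dots,n_i)\in[\N]^i$.

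Next, for a threshold $\delta>0$ put $I_\delta=\{i\in\{1,\dots,k\}:K_i>\delta\}$. Since $z^*(\n,t)-z^*(\n)=y^*(n_1,\dots,n_i,t)$ when $\n$ has length $i$, each level $i\in\{0,\dots,k-1\}$ with $K_{i+1}>\delta$ carries, uniformly over the tree, a jump of norm at least $\delta/2$; there are exactly $|I_\delta|$ such levels. Feeding these levels (all with constant $\delta/2$) into Proposition~\ref{Szlenk-trees} yields $0=z^*_\emptyset\in s_{\delta/2}^{|I_\delta|}(B_{X^*})$, so $s_{\delta/2}^{|I_\delta|}(B_{X^*})\neq\emptyset$ and hence $|I_\delta|<\Sz(X,\delta/2)\le 2C/\delta$, where $C$ is the proportionality constant of the Szlenk index of $X$.

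Now I would turn this ``for every $\delta$'' information into a logarithmic bound on the sum. Ordering the constants as $K_{(1)}\ge\cdots\ge K_{(k)}$, the inequality $|I_\delta|\le 2C/\delta$ forces $K_{(j)}\le\min(1,2C/j)$ for each $j$, whence
\[\sum_{i=1}^kK_i=\sum_{j=1}^kK_{(j)}\le\sum_{j=1}^k\min\Big(1,\frac{2C}{j}\Big)\le M_0(1+\ln k)\]
for some $M_0$ depending only on $C$ (equivalently, integrate $\int_0^1|I_\delta|\,d\delta$, or decompose $\{1,\dots,k\}$ dyadically according to the size of the $K_i$). Finally, exactly as in the proof of Theorem~\ref{summable}, items $(ii)$ and $(iii)$ of Proposition~\ref{summarize} give, for all $\n,\m\in[\M]^k$,
\[\|f(\n)-f(\m)\|\le 2\sum_{i=1}^kK_i+2k\eps\le 2M_0(1+\ln k)+2k\eps,\]
and choosing $\eps\le 1/k$ makes the last term at most $2$. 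Since $1+\ln k\ge 1$ we conclude $\diam f([\M]^k)\le(2M_0+2)(1+\ln k)$, which is the assertion with $M=2M_0+2$ (changing $\ln$ to the paper's $\log$ only affects the value of $M$).

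I do not expect a real obstacle here: the structural content is already isolated in Propositions~\ref{summarize} and~\ref{Szlenk-trees}, and the only genuinely new ingredient is the elementary weak‑$\ell_1$/layer‑cake passage from ``the number of $K_i$ exceeding $\delta$ is $O(1/\delta)$'' to ``$\sum_i K_i=O(\log k)$''. The two points requiring care are the harmless off‑by‑one between the tree levels $\{0,\dots,k-1\}$ and the indices $\{1,\dots,k\}$ of the $K_i$, and remembering that it is the threshold $\delta/2$ (coming from $\|y^*\|\ge K_i/2$), rather than $\delta$ itself, that is fed to the Szlenk derivations.
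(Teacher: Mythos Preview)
Your proposal is correct and follows the same route as the paper: reduce to the separable $1$-Lipschitz case, invoke Proposition~\ref{summarize}, use Proposition~\ref{Szlenk-trees} together with the proportional Szlenk hypothesis to control the distribution of the $K_i$, and finish via items $(ii)$--$(iii)$. The only cosmetic difference is in the bookkeeping of $\sum_{i=1}^k K_i$: the paper groups the $K_i$ dyadically into the bands $[2^{r-1}\eta,2^r\eta]$ with $\eta=C/k$ and sums $O(\log k)$ bands each contributing $O(1)$, whereas you pass through the decreasing rearrangement and the bound $K_{(j)}\le\min(1,2C/j)$; you yourself note these are interchangeable, and they give the same $O(1+\log k)$ estimate.
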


\begin{proof}\iffalse \mnote{\tch{Why separable?} To apply Proposition 2.11., but this is not a problem by Proposition 2.8}\fi By Proposition~\ref{toutcon}, we can  assume that $X$ is a separable  Banach space such that for all $\eps >0$, $\Sz(X,\eps) \le \frac{C}{\ep}$, for some $C>0$.
Let $f:([\N]^k,d_{\K}^k)\to X^*$ be a 1-Lipschitz map, fix $\eps>0$ and consider $\M\in [\N]^\omega$ and $K_1,\ldots,K_k$ given by Proposition~\ref{summarize}. Let $\eta=\frac{C}{k}$ and, for $r\in \N$, denote by $I_r$ the set of all $i$'s in $\{1,\ldots,k\}$ such that $2^{r-1}\eta\le K_i\le 2^r\eta$, and let $N_r$ be the cardinality of $I_r$.
It follows from item $(iv)$ in Proposition~\ref{summarize} and Proposition~\ref{Szlenk-trees} that $N_r\le \Sz(X,2^{r-2}\eta)\le \frac{4C}{2^r\eta}$ and so $N_rK_i\leq 4C$ if $i\in I_r$.
Notice also that, since $f$ is 1-Lipschitz,  $K_i\le 1$ for all $i\leq k$, which implies that $I_r$ is empty for $r>\log_2(\frac{k}{C})+1$. Let $N=\lceil \log_2(\frac{k}{C})+1\rceil$.
We deduce that
$$\sum_{i=1}^kK_i\le k\eta + \sum_{r=1}^{N}\sum_{i\in I_r}K_i\le k\eta + 4CN\le C+4CN.$$
Finally, using item $(iii)$ of Proposition~\ref{summarize}, we get that
$$\diam(f([\M]^k)\le 2C+8CN +2k\eps \le 3C+8CN,$$
if $\eps$ was initially chosen small enough.
\iffalse\mnote{\tch{$\eta$ plays no role here.}}\fi In view of the definition of  $N$, this clearly yields the conclusion of our proposition.
\end{proof}

%--------------------------------------------------------

	\section{Optimality}\label{SectionOptimality}

In the previous section, we proved that if the family of Kalton's interlaced graphs equi-Lipschitz embeds into a dual Banach space $X^*$, then the Szlenk index of $X$ is at least $\omega^2$. Indeed, it is known that, when it is well defined, the Szlenk index of a Banach space is always of the form $\omega^\alpha$ for some ordinal $\alpha$ (see for instance \cite{Lancien2006}). Here we show that this result is optimal. That is, we exhibit a separable dual Banach space with Szlenk index $\omega^2$
which contains the interlaced graphs. To this aim we will use Lipschitz free spaces. Recall, if $(M,x_0)$ is a pointed metric space (a Banach space $X$  is always considered as a pointed metric space with $x_0=0$), then $\Lip_0(M)$ denotes the space of all  Lipschitz maps $f:M\to \R$  so that $f(x_0)=0$. Endowed with the norm $\|f\|=\Lip(f)$, $\Lip_0(M)$ is a Banach space. Given $x\in M$, the map $\delta_x:\Lip_0(M)\to \R$ given by $\delta_x(f)=f(x)$ for all $f\in \Lip_0(M)$ belongs to $\Lip_0(M)^*$, and we define the \emph{Lipschitz free space of $M$} as
\[\F(M)=\overline{\mathrm{span}}\{\delta_x\in \Lip_0(X)^*\colon x\in X\}.\]
We refer to the monograph %\cite{Weaver2018SecondEdBook}
\iffalse \mnote{\tch{Hope you don't mind the new reference.}}\fi \cite{CobzasMiculescuNicolae}
for the basic properties of $\F(M)$. Just note that the map $\delta:x\mapsto \delta_x$ is an isometry from $M$ into $\F(M)$.

In order to exhibit a separable dual Banach space with Szlenk index $\omega^2$, the strategy will be to consider the Lipschitz free space $\F(M)$ over a metric space $M$ which contains the interlaced graphs, and then  prove that $\F(M)$ has the required properties. In particular, the next corollary from \cite{GarciaLirolaPetitjeanProchazka2018Medi} will be useful for proving that $\F(M)$ is isometrically a dual Banach space.
In the following statement, $\mathcal C_\tau(M)$ stands for the set of maps from $M$ to $\R$ which are continuous with respect to some topology $\tau$ on $M$.

\begin{proposition}[Corollary 3.7 of \cite{GarciaLirolaPetitjeanProchazka2018Medi}]\label{prop:unifdiscduality}
	Let $(M, d)$ be a uniformly discrete, bounded, separable metric space with a distinguished point $0 \in M$. Assume that there is a Hausdorff topology $\tau$ on $M$ such that:
	\begin{itemize}
		\item[(i)]  $(M, \tau)$ is compact
		\item[(ii)] $d$ is $\tau$-lower semicontinuous.
	\end{itemize}
If $X=\mathrm{Lip}_0(M,d) \cap \mathcal C_\tau(M)$ is equipped with the Lipschitz norm $\norm{\cdot}_L$, then $X$ is an isometric predual of $\F(M)$. Moreover the weak$^*$-topology induced by $X$ on $\F(M)$ coincides with $\tau$ on $\delta(M)$, that we identify with $M$.
\end{proposition}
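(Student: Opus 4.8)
The plan is to prove that the canonical restriction map $\Phi\colon\F(M)\to X^*$ is a surjective isometry; all of this is soft except for one isometric lower bound, which is the real content.

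\textbf{Reductions and the canonical map.} Since $M$ is uniformly discrete, say with $d\ge\theta>0$, and bounded, every bounded real function on $M$ is Lipschitz, with constant at most $2\norm{\cdot}_\infty/\theta$. As $(M,\tau)$ is compact, the functions in $\mathcal C_\tau(M)$ are bounded, so $X=\{f\in\mathcal C_\tau(M):f(0)=0\}$ as a set, and on $X$ the Lipschitz norm and the sup norm are equivalent; in particular $X$ is complete (Lipschitz convergence forces uniform convergence, which preserves $\tau$-continuity), hence a closed subspace of $\Lip_0(M,d)=\F(M)^*$, and restriction of functionals gives a norm-one operator $\Phi\colon\F(M)\to X^*$ with $\Phi\delta_x=\delta_x|_X$. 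Moreover $(M,\tau)$ is countable (being separable and uniformly discrete) and compact Hausdorff, hence normal; so $\mathcal C_\tau(M)$, and therefore $X$, separates the points of $M$, and $\delta\colon x\mapsto\delta_x|_X$ is an injective, $\tau$-to-weak$^*$ continuous map of the compact space $(M,\tau)$ into $X^*$, thus a homeomorphism onto its image. Once $\Phi$ is shown to be an onto isometry, this last fact is exactly the concluding assertion of the statement.

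\textbf{Surjectivity.} Put $K=(M,\tau)$. Since $K$ is countable, every finite Borel measure on $K$ is purely atomic, so $\mathcal C_\tau(M)^*=\cM(K)=\ell_1(K)$ with the finitely supported measures norm-dense. Identifying $X^*$ --- whose norm is equivalent to the $\mathcal C_\tau(M)$-dual norm --- with $\cM(K)$ modulo the line of point masses at $0$, we get that $\Phi\big(\mathrm{span}\{\delta_x:x\in M\}\big)$ is norm-dense in $X^*$. Hence, as soon as $\Phi$ is bounded below, its range is closed and dense, so $\Phi$ is onto. Boundedness below follows from a \emph{rough norming}: for every finitely supported $\mu=\sum_i a_i\delta_{x_i}$ one has $\norm{\Phi\mu}_{X^*}\ge L_0^{-1}\norm{\mu}_{\F(M)}$ with $L_0:=4\,\diam(M)/\theta$. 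Indeed, pick a $1$-Lipschitz $g$ with $g(0)=0$ and $\langle g,\mu\rangle=\norm{\mu}_{\F(M)}$ and let $\phi=\min_{p\in F}\big(g(p)+d(\cdot,p)\big)$ be its lower McShane extension from $F=\{0,x_1,\dots,x_n\}$; then $\phi$ is $1$-Lipschitz, $\phi|_F=g|_F$, and $\phi$ is $\tau$-lower semicontinuous because $d$ is. On the compact normal space $K$, $\phi$ is the pointwise supremum of the upward-directed family of $\tau$-continuous functions lying below it, each of which is automatically $L_0$-Lipschitz by uniform discreteness; after subtracting its value at $0$, such a function lies in $L_0B_X$ and approximates $g$ on $F$, which yields the rough norming.

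\textbf{Isometry --- the main obstacle.} It remains to improve this to: $B_X$ is $1$-norming for $\F(M)$. Given $\mu$ as above, take $g$ extremal and form both McShane extensions of $g|_F$, $\phi=\min_{p\in F}(g(p)+d(\cdot,p))$ and $\psi=\max_{p\in F}(g(p)-d(\cdot,p))$: both are $1$-Lipschitz, they agree with $g$ on $F$, $\psi\le\phi$, and (as $d$ is $\tau$-l.s.c.) $\phi$ is $\tau$-lower and $\psi$ is $\tau$-upper semicontinuous. Everything thus reduces to the following \emph{insertion lemma with Lipschitz control}, which is the heart of the proof: \emph{given a $\tau$-l.s.c.\ $1$-Lipschitz $\phi$ and a $\tau$-u.s.c.\ $1$-Lipschitz $\psi\le\phi$ on $M$, for every $\varepsilon>0$ there is a $\tau$-continuous $(1+\varepsilon)$-Lipschitz $h$ with $\psi\le h\le\phi$.} (Granting it, $h$ agrees with $g$ on $F$, so $f:=(h-h(0))/(1+\varepsilon)\in B_X$ satisfies $\langle f,\mu\rangle\to\norm{\mu}_{\F(M)}$ as $\varepsilon\to0$, giving the $1$-norming.) The difficulty is genuine: the Kat\v{e}tov--Tong insertion theorem provides a $\tau$-continuous $h$ with $\psi\le h\le\phi$, but only with $\Lip(h)\le 1+\theta^{-1}\sup(\phi-\psi)$, which can be large; while $\tfrac12(\phi+\psi)$ is $1$-Lipschitz but in general not $\tau$-continuous. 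I would attack the lemma by exploiting both semicontinuity properties together with compactness of $(M,\tau)$: cover $K$ by finitely many $\tau$-open sets on which $\phi$ has small oscillation (possible since $\phi$ is l.s.c.), produce on each of them a $\tau$-continuous function trapped between $\psi$ and $\phi$ with controlled Lipschitz behaviour, and patch by a subordinate $\tau$-continuous partition of unity, using uniform discreteness ($d\ge\theta$) and the $\tau$-lower semicontinuity of $d$ to keep the patched function $(1+\varepsilon)$-Lipschitz; alternatively, build $h$ by transfinite recursion along the Cantor--Bendixson derivatives of the scattered compact space $(M,\tau)$. Carrying either scheme through cleanly is where the weight of the argument lies. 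With the lemma in hand, $\Phi$ is isometric, hence --- with the surjectivity above --- an isometric isomorphism of $\F(M)$ onto $X^*$; so $X$ is an isometric predual of $\F(M)$, and the coincidence of the weak$^*$ topology with $\tau$ on $\delta(M)$ is the homeomorphism recorded in the first paragraph.
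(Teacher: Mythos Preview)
The paper does not prove this proposition at all: it is simply quoted as Corollary~3.7 of \cite{GarciaLirolaPetitjeanProchazka2018Medi} and used as a black box in Section~\ref{SectionOptimality}. There is therefore no ``paper's own proof'' to compare against.

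As for your argument on its own merits: the overall architecture (restriction map $\Phi\colon\F(M)\to X^*$, reduction of isometry to a $1$-norming statement, McShane envelopes $\psi\le\phi$ with the right semicontinuity) is correct and well organised, and you have correctly isolated the crux. But the proof is genuinely incomplete: the \emph{insertion lemma with Lipschitz control} is precisely the nontrivial content of the result, and you do not prove it---you only sketch two possible lines of attack (partition-of-unity patching, Cantor--Bendixson recursion) and concede that ``carrying either scheme through cleanly is where the weight of the argument lies.'' Neither sketch comes with the estimates needed to bound the Lipschitz constant by $1+\varepsilon$, and the partition-of-unity idea in particular is delicate: a $\tau$-continuous partition of unity typically has large Lipschitz constant with respect to $d$, so naive convex combinations destroy the $1$-Lipschitz control you need. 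This lemma is essentially a Matou\v{s}kov\'a-type insertion theorem and does require real work; without it you only have the rough $L_0^{-1}$ norming, which gives isomorphism but not isometry.

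For comparison, the route taken in the cited source avoids this head-on insertion problem by using an abstract duality criterion: one shows that $X$ is a closed separating subspace of $\Lip_0(M)$ and that $B_{\F(M)}$ is $\sigma(\F(M),X)$-compact (the $\tau$-compactness of $M$ and $\tau$-lower semicontinuity of $d$ being exactly what makes the closed convex hull of $\{\pm(\delta_x-\delta_y)/d(x,y)\}$ $\sigma(\F(M),X)$-closed). This yields $\F(M)=X^*$ isometrically in one stroke, without ever needing to approximate a specific Lipschitz function by $\tau$-continuous ones. Your direct norming approach is more constructive but pays for it with the unresolved insertion step.
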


For any given $k\in\N$, a concrete bi-Lipschitz copy of the metric space $([\N]^{\leq k}, \dK)$ into $c_0$ is given by the map $f_k:[\N]^{\leq k}\to c_0$ defined by
$$\forall \ov{n} = (n_1 , \ldots ,n_j) \in [\N]^{\leq k}: \quad  f_k(\n)=\sum_{i=1}^j s_{n_i}, $$
where  $(s_n)_{n=1}^\infty$ stands for the summing basis of $c_0$. Indeed, one can easily check that \begin{equation} \label{eq:embc0} \tag{E}
 \forall \n,\m \in [\N]^{\leq k} : \quad  \frac12 \dK(\n,\m)\le \|f_k(\n)-f_k(\m)\| \le \dK(\n,\m)
 \end{equation}
 (see for instance \cite{LancienPetitjeanProchazka2018}*{Proposition 2.5}).

For each $k\in\N$,  let
$$ \widetilde{M_k}  = \ov{f_k\big([\N]^{\leq k}\big)}^{w^*}  \subset \ell_{\infty},$$
%where the weak$^*$-topology is of course given by $\ell_1$.
which is weak$^*$-compact since $f_k\big([\N]^{\leq k}\big)$ is bounded.
Letting $\mathds{1} \in \ell_{\infty}$ be the constant sequence equal to 1, it is readily seen that
$$  \widetilde{M_k}  = \Big\{\sum_{i=1}^j s_{n_i} + \ell \mathds{1} \: : \; j,\ell \in \N \cup \{0\}, \; j+\ell \leq k, \; n_1< \ldots < n_j \in \N \Big\}. $$
Hence, endowed with the usual norm $\|\cdot\|_\infty$ of $\ell_\infty$, the space $\widetilde{M_k}$ is  countable and uniformly discrete.
Finally, the norm $\|\cdot\|_\infty$ is trivially weak$^*$-lower semicontinuous.
The next corollary is therefore a   direct consequence of Proposition~\ref{prop:unifdiscduality}.

\begin{corollary} \label{cor:Mkdual}
For any $k\in\N$, the free space $\F\big(\widetilde{M_k} ,\|\cdot\|_\infty \big)$ is isometric to a separable dual Banach space $X_k^*$, where $X_k = \mathrm{Lip}_0(\widetilde{M_k},\|\cdot\|_\infty ) \cap \mathcal C_{w^*}(\widetilde{M_k})$.
\end{corollary}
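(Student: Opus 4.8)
The plan is to verify that the metric space $(\widetilde{M_k},\|\cdot\|_\infty)$, equipped with the topology $\tau$ induced on it by the weak$^*$ topology of $\ell_\infty$, satisfies all the hypotheses of Proposition~\ref{prop:unifdiscduality}. That proposition then produces an isometric predual of $\F(\widetilde{M_k})$ which is precisely $X_k=\mathrm{Lip}_0(\widetilde{M_k},\|\cdot\|_\infty)\cap\mathcal C_{w^*}(\widetilde{M_k})$, so $\F(\widetilde{M_k})$ is isometric to $X_k^*$; it is separable because $\widetilde{M_k}$ is countable and hence $\F(\widetilde{M_k})=\overline{\mathrm{span}}\{\delta_x:x\in\widetilde{M_k}\}$ is separable. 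This is exactly the statement of the corollary, and Proposition~\ref{prop:unifdiscduality} additionally records that the weak$^*$ topology induced on $\F(\widetilde{M_k})$ by $X_k$ coincides with $\tau$ on $\delta(\widetilde{M_k})$, a fact that will be needed afterwards.

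Most of the required hypotheses are immediate from the explicit description $\widetilde{M_k}=\{\sum_{i=1}^j s_{n_i}+\ell\mathds{1}:\ j,\ell\in\N\cup\{0\},\ j+\ell\le k\}$ obtained above. Taking the distinguished point to be the zero sequence $0\in\widetilde{M_k}$ (the case $j=\ell=0$), every element has $\|\cdot\|_\infty$-norm at most $j+\ell\le k$, so $\widetilde{M_k}$ is bounded; it is countable, hence separable; and, being the weak$^*$-closure of the bounded set $f_k([\N]^{\le k})$, it is weak$^*$-compact by Banach--Alaoglu, so $(\widetilde{M_k},\tau)$ is compact, while $\tau$ is Hausdorff as the restriction of the weak$^*$ topology of $(\ell_1)^*$. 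Lower semicontinuity of $d$ with respect to $\tau$ is also clear: the norm of a dual Banach space is weak$^*$-lower semicontinuous, hence so is $(x,y)\mapsto\|x-y\|_\infty$ on $\widetilde{M_k}\times\widetilde{M_k}$.

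The only point that calls for a genuine, if short, argument --- and thus the one mild obstacle --- is uniform discreteness, where one must handle distances near the ``limit'' vectors $\ell\mathds{1}$. Given two distinct points $x=\sum_{i=1}^j s_{n_i}+\ell\mathds{1}$ and $y=\sum_{i=1}^{j'} s_{n'_i}+\ell'\mathds{1}$ of $\widetilde{M_k}$, I would split into cases. If $\ell\ne\ell'$, then the coordinates of $x-y$ are eventually constant equal to $\ell-\ell'$, so $\|x-y\|_\infty\ge|\ell-\ell'|\ge1$. If $\ell=\ell'$, then the corresponding tuples $\n,\m\in[\N]^{\le k}$ must be distinct, whence $\dK(\n,\m)\ge1$, and the bi-Lipschitz estimate \eqref{eq:embc0} yields $\|x-y\|_\infty=\|f_k(\n)-f_k(\m)\|_\infty\ge\tfrac12\dK(\n,\m)\ge\tfrac12$. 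Thus $\widetilde{M_k}$ is $\tfrac12$-uniformly discrete, and all hypotheses of Proposition~\ref{prop:unifdiscduality} are met, which concludes the argument.
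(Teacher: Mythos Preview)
Your proof is correct and follows exactly the approach the paper takes: the paper simply declares the corollary to be ``a direct consequence of Proposition~\ref{prop:unifdiscduality}'' after noting in the preceding paragraph that $\widetilde{M_k}$ is weak$^*$-compact, countable, uniformly discrete, and that $\|\cdot\|_\infty$ is weak$^*$-lower semicontinuous. Your write-up merely supplies the details the paper omits --- in particular the case split for uniform discreteness, which the paper asserts without argument.
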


\begin{theorem}\label{prop:Igraphdual}
	The Kalton graph $([\N]^{<\omega}, \dK)$ Lipschitz embeds into a separable dual space $X^*$ with $\Sz(X) = \omega^2$.
\end{theorem}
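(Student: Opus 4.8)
The plan is to realize the target space as a Lipschitz-free space assembled from the spaces $\widetilde{M_k}$ constructed above. First note that the explicit description of $\widetilde{M_k}$ shows $\widetilde{M_k}\subseteq\widetilde{M_{k+1}}$ isometrically, both carrying $\|\cdot\|_\infty$ and both containing the base point $0$, so by functoriality of free spaces $\F(\widetilde{M_k})$ embeds isometrically into $\F(\widetilde{M_{k+1}})$; moreover these embeddings are weak$^*$-to-weak$^*$ continuous, being the adjoints of the restriction maps between the preduals $X_k$ of Corollary~\ref{cor:Mkdual}. Put $\widetilde M:=\bigcup_k\widetilde{M_k}$, a countable uniformly discrete metric subspace of $\ell_\infty$, and observe that $\widetilde{M_k}$ is relatively open in $\widetilde{M_{k+1}}$ for the weak$^*$ topology (the top layer $\widetilde{M_{k+1}}\setminus\widetilde{M_k}$ is weak$^*$-closed there), so $\widetilde M$ carries a locally compact $\sigma$-compact Hausdorff topology $\tau$, each $\tau$-compact subset is bounded, and $\|\cdot\|_\infty$ is $\tau$-lower semicontinuous. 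I would then argue, by the $\sigma$-compact analogue of Proposition~\ref{prop:unifdiscduality} (set up exactly as there, or drawn from its references), that $\F(\widetilde M)=\overline{\bigcup_k\F(\widetilde{M_k})}$ is a separable dual space $X^*$, with predual $X$ an appropriate space of Lipschitz functions on $\widetilde M$ that are weak$^*$-continuous on each $\widetilde{M_k}$ (equivalently $X=\varprojlim_k X_k$).

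Next I would produce the embedding. Let $f\colon[\N]^{<\omega}\to\ell_\infty$ be the common extension of the $f_k$'s, $f(\ov n)=\sum_{i=1}^{|\ov n|}s_{n_i}$, so $f(\ov n)\in\widetilde{M_k}$ whenever $k\ge|\ov n|$, and set $\Phi:=\delta\circ f\colon[\N]^{<\omega}\to\F(\widetilde M)$. For $\ov n,\ov m\in[\N]^{<\omega}$, taking $k=\max(|\ov n|,|\ov m|)$, both $f(\ov n)$ and $f(\ov m)$ lie in $\widetilde{M_k}$; since $\delta$ is isometric on $\widetilde{M_k}$ and $\F(\widetilde{M_k})\hookrightarrow\F(\widetilde M)$ is isometric, $\|\Phi(\ov n)-\Phi(\ov m)\|=\|f(\ov n)-f(\ov m)\|_\infty$, which by \eqref{eq:embc0} lies between $\tfrac12\dK(\ov n,\ov m)$ and $\dK(\ov n,\ov m)$. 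Thus $\Phi$ is a Lipschitz embedding of $([\N]^{<\omega},\dK)$ into the separable dual space $X^*$.

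It remains to compute $\Sz(X)$. For the lower bound, $\Phi$ restricts to equi-Lipschitz embeddings of $([\N]^{k},\dK)_k$ into $X^*$, so Theorem~\ref{Szlenk} gives $\Sz(X)>\omega$, hence $\Sz(X)\ge\omega^2$ since the Szlenk index is always of the form $\omega^\alpha$ (cf.\ \cite{Lancien2006}). For the upper bound one first checks $\Sz(X_k)=\omega$ for every $k$: the space $(\widetilde{M_k},w^*)$ is a countable compact metrizable space of finite Cantor--Bendixson rank (its successive derived sets are, up to a shift by $\mathds 1$, the spaces $\widetilde{M_{k-j}}$), while $\widetilde{M_k}$ is uniformly discrete for $\|\cdot\|_\infty$; this forces $X_k=\Lip_0(\widetilde{M_k})\cap\mathcal C_{w^*}(\widetilde{M_k})$ to have Szlenk index $\omega$ (indeed to admit an equivalent AUS norm, by Theorem~\ref{SzlenkAUS}). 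Then, using $B_{X^*}=\overline{\bigcup_k B_{\F(\widetilde{M_k})}}$ with all the inclusions isometric and weak$^*$-continuous, iterating the Szlenk derivation along the countably many scales $k$ yields $\Sz(X,\eps)\le\omega\cdot n(\eps)$ for each $\eps>0$ with $n(\eps)\in\N$, whence $\Sz(X)\le\omega^2$. Combining, $\Sz(X)=\omega^2$.

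As for difficulty: paragraph two is routine once paragraph one is in place, so the real work is (a) verifying that $\F(\widetilde M)=\overline{\bigcup_k\F(\widetilde{M_k})}$ is a separable dual space, i.e.\ constructing the predual $X$ and identifying its dual from the compatible structure of the $X_k$, and (b) the upper estimate $\Sz(X)\le\omega^2$, which needs both the finite-rank computation $\Sz(X_k)=\omega$ and a careful control of how the Szlenk derivations at a fixed scale $\eps$ interact across all the $\widetilde{M_k}$'s. I expect (b) to be the main obstacle.
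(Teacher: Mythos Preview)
Your approach differs substantially from the paper's, and both parts you flag as ``real work'' are genuine gaps rather than routine verifications.

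The paper avoids your difficulty (a) entirely. It does not form $\widetilde M=\bigcup_k\widetilde{M_k}$ and attempt to dualize $\F(\widetilde M)$. Instead it applies Kalton's decomposition \cite{Kalton2004CollMath}*{Proposition 4.3} to embed $\F([\N]^{<\omega},\dK)$, up to $(1+\eps)$, into the $\ell_1$-sum $\big(\sum_k \F([\N]^{\le 2^k},\dK)\big)_{\ell_1}$, and then embeds each summand $2$-linearly into $X_{2^k}^*=\F(\widetilde{M_{2^k}})$ via \eqref{eq:embc0}. The target is thus $\big(\sum_k X_{2^k}^*\big)_{\ell_1}=\big(\sum_k X_{2^k}\big)_{c_0}^*$, which is manifestly a separable dual; no locally compact analogue of Proposition~\ref{prop:unifdiscduality} is needed, only the bounded case already in hand. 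Your proposed ``$\sigma$-compact analogue'' is not available off the shelf: Proposition~\ref{prop:unifdiscduality} uses boundedness in an essential way (to identify the Lipschitz and sup norms on the candidate predual), and you have not shown that your $X=\varprojlim X_k$, equipped with the Lipschitz norm, is complete, nor that its dual is $\F(\widetilde M)$ with matching weak$^*$ topology.

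For (b), the paper's route is again cleaner: once $X=\big(\sum_k X_{2^k}\big)_{c_0}$, the upper bound $\Sz(X)\le\omega^2$ follows directly from Brooker's theorem on Szlenk indices of $c_0$-sums \cite{Brooker2011}, given $\Sz(X_{2^k})=\omega$ for each $k$. (The paper obtains the latter by noting that $X_{2^k}$ is a hyperplane of $\mathcal C(\widetilde{M_{2^k}},w^*)$, hence isomorphic to $c_0$ since $\widetilde{M_{2^k}}$ has finite Cantor--Bendixson rank.) Your argument, by contrast, asserts $\Sz(X,\eps)\le\omega\cdot n(\eps)$ without saying what $n(\eps)$ is or why it should be finite. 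The equality $B_{X^*}=\overline{\bigcup_k B_{\F(\widetilde{M_k})}}$, even if established together with weak$^*$ compatibility, does not by itself control fixed-$\eps$ Szlenk derivations: there is no mechanism forcing the $\eps$-derivation to eventually land inside a single $B_{\F(\widetilde{M_k})}$, and absent a structural result of Brooker type for your inverse-limit predual, ``iterating across the layers'' does not produce a finite multiple of $\omega$.
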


\begin{proof}
Let  $M = ([\N]^{<\omega}, \dK)$, and consider the distinguished point $0=\emptyset\in [\N]^{<\omega}$. For each $k\in\N$, let $$M_{2^k} = B(0,2^k) =[\N]^{\leq 2^k}.$$
So $M=\bigcup_{k\in\N} M_{2^k}$. Then we use Kalton's decomposition \cite{Kalton2004CollMath}*{Proposition 4.3} to deduce that for every $\ep>0$, $\F(M)$ $(1+\ep)$-linearly embeds into
$$\Big(\sum_{k \in \N} \F(M_{2^k},\dK) \Big)_{\ell_1}.$$

For each $k\in\N$, we let $\widetilde{M_{2^k}} $ be the metric subspace of $\ell_\infty$ as it is defined above. For each $k\in \N$,  let $X_{2^k}$ be the predual of  $\F(\widetilde{M_{2^k}} ,\|\cdot\|_\infty )$ given by  Corollary \ref{cor:Mkdual}.   It follows from (\ref{eq:embc0}) that for all $k\in\N$, $\F(M_{2^k},\dK)$ 2-linearly embeds into $X^*_{2^k}$. Since $M$ isometrically embeds into $\F(M)$, we deduce that, for any $\ep>0$,  $M$ Lipschitz embeds with distorsion $2(1+\ep)$ into
	$$\Big(\sum_{k \in \N} \F\big(\widetilde{M_{2^k}} ,\|\cdot\|_\infty \big) \Big)_{\ell_1} \equiv \Big(\sum_{k \in \N} X_{2^k} \Big)_{c_0}^*.$$
	
Let $X := (\sum_{k \in \N} X_{2^k} )_{c_0}$.
It remains to prove that $\Sz(X)=\omega^2$.
By Theorem~\ref{Szlenk} we know that $\Sz(X) > \omega$ and therefore $\Sz(X) \ge \omega^2$, so we only have to prove the reverse inequality.
Notice that \[X_{2^k} = \mathrm{Lip}_0(\widetilde{M_{2^k}},\norm{\cdot}_\infty ) \cap \mathcal C_{w^*}(\widetilde{M_{2^k}})\] equipped with its Lipschitz norm is isomorphic to a subspace of the Banach space
$(\mathcal C(\widetilde{M_{2^k}},w^*) , \|\cdot\|_{\infty})$ of continuous functions on the compact metrisable space
$(\widetilde{M_{2^k}},w^*)$.
Indeed, as $(\widetilde{M_{2^k}} , \|\cdot\|_{\infty})$ is bounded and uniformly discrete, we have that the sup-norm and the Lipschitz norm are equivalent on $X_{2^k}$.
Thus
\[X_{2^k}=\{f\in  \mathcal C_{w^*}(\widetilde {M_{2^k}})\colon f(0)=0\},\]
which clearly is a hyperplane of $\mathcal C_{w^*}(\widetilde{M_{2^k}})$ and it follows, for instance from \cite{AlbiacKalton2006}*{Proposition 4.4.1}, that $X_{2^k}$ is actually isomorphic to  $(\mathcal C(\widetilde{M_{2^k}},w^*) , \|\cdot\|_{\infty})$.

Next, we claim that the Cantor--Bendixson index of $\widetilde{M_{2^k}}$ is equal to $2^k+1$. Indeed it is readily seen by induction that the first $2^k$ derived sets are
	$$ \widetilde{M_{2^k}}^{(d)}  = \Big\{\sum_{i=1}^j s_{n_i} + \ell \mathds{1} \: : \; j,\ell \in \N \cup \{0\},\;  \ell \geq d, \; \; j+\ell \leq 2^k, \; n_1< \ldots < n_j \in \N \Big\},  $$
	whenever $d \in \set{1 , \ldots ,2^k}$ so the claim easily follows. This shows that $X_{2^k}$ is isomorphic to $c_0$ (e.g., \cite{AlbiacKalton2006}*{Theorem 4.5.2}) and therefore that $\Sz(X_{2^k}) = \omega$. Finally it follows from \cite{Brooker2011} that
	$$ \Sz(X) = \Sz\Big(\Big(\sum_{k \in \Z} X_{k} \Big)_{c_0}\Big) \leq \omega^2,$$
	and we are done.
\end{proof}

\begin{remark}
	The proof of the last proposition shows that $\F([\N]^{<\omega}, \dK)$ is isomorphic to a subspace of $X^* = \big(\sum_{k \in \N} \F\big(\widetilde{M_{2^k}} ,\|\cdot\|_\infty \big) \big)_{\ell_1}$. In fact, the image is even complemented in $X^*$. Indeed, this follows from the following two facts (we adopt the same notation as in the proof of Theorem~\ref{prop:Igraphdual} above). First, in Kalton's decomposition, the image of $\F(M)$ is complemented in $(\sum_k\F(M_{2^k}))_{\ell_1}$ (this is proved in detail in \cite{VectorValued}*{Proposition 3.5}).
	
	\iffalse
	From Kalton's decomposition (see \cite{Kalton2004CollMath}*{Lemma 4.2}).   there exist a constant $C>0$ and a sequence of operators $T_k\colon \F(M) \to \F(M_{k})$ satisfying
	\[ \gamma = \sum_{k\in \mathbb Z} T_k \gamma \text{ unconditionally and } \sum_{k\in \mathbb Z} \|T_k \gamma \| \leq C\|\gamma\|\]
	for every $\gamma\in \mathcal F(M)$. Let $S\colon \F(M) \to (\sum_k\F(M_{k}))_{\ell_1}$ be given by setting  $S \gamma = (T_k \gamma)_k$ for all $\gamma\in \F(M)$, so $S$  is an isomorphism between $\F(M)$ and a closed subspace of $(\sum\F(M_{k}))_{\ell_1}$. Define $P:(\sum_k\F(M_{k}))_{\ell_1} \to S(\F(M))$ by letting $P((\gamma_k)_k) = \nolinebreak (T_k \gamma)_k$ for all $\gamma = \sum_k \gamma_k\in (\sum\F(M_{k}))_{\ell_1}$. Then $P$ is a well defined projection. Indeed, if $(\gamma_k)_k \in (\sum\F(M_k))_{\ell_1}$ then $P(P((\gamma_k)_k)) = P((T_k\gamma)_k)$. Next if we define $\gamma := \sum_{k\in \mathbb N} T_k \gamma$, it follows that $P((T_k\gamma)_k) = (T_k \gamma)_k$, which proves that $P\circ P=P$ . Notice that $P$ is continuous since, given $(\gamma_k)\in (\sum\F(M_k))_{\ell_1}$, if we define $\gamma:=\sum_{k\in\mathbb Z} \gamma_k$, we have the following chain of inequalities
	\[\Vert P((\gamma_k))\Vert =\big \|\sum_{k\in \mathbb Z} T_k \gamma \big\| \leq \sum_{k\in \mathbb Z} \|T_k \gamma \| \leq C \|\gamma\| = C \big \|\sum_{k\in \mathbb Z} \gamma_k\big \| \leq C \sum_{k\in \mathbb Z} \|\gamma_k\|.\]
	\fi
	
	Second, we claim that $\big(\sum \F(M_{2^k}) \big)_{\ell_1}$ is isomorphic to a 1-complemented subspace of $\big(\sum \F(\widetilde{M_{2^k}}) \big)_{\ell_1}.$
	It is enough to show that $\F(M_{2^k})$ is 1-complemented in $\F(\widetilde{M_{2^k}})$ for every $k \in \N$, but this simply follows from the fact that
	\[
	\begin{array}{cccc}
		r_{2^k}  : & \widetilde{M_{2^k}} & \to & f_{2^k}(M_{2^k}) \\
		&\displaystyle \sum_{i=1}^j s_{n_i} + \ell \mathds{1} & \mapsto &\displaystyle  \sum_{i=1}^j s_{n_i}
	\end{array}
	\]
	is a 1-Lipschitz retraction.\qed
\end{remark}

\begin{remark}
	We have proved that, for every $\ep>0$, $([\N]^{<\omega}, \dK)$ Lipschitz embeds with distorsion $2(1+\ep)$ into a separable dual Banach space. It is actually possible to do it with distorsion $(1+\ep)$. To this end, instead of using  the natural embeddings of the $([\N]^{k}, \dK)$'s into $c_0$ (which are of distorsion 2), one can build concrete metric spaces containing isometrically the interlaced graphs and which satisfy the assumptions of Proposition~\ref{prop:unifdiscduality}. The counterpart is that one has to define by hand the required topology $\tau$ and then check that the distance is $\tau$-lower semicontinous (which was automatic with the $w^*$-topology in $\ell_\infty$). The same optimal estimate on the Szlenk index is obtained.

\end{remark}

\section{Low distortion embedding of the grid of \texorpdfstring{$c_0$}{c0} into duals}\label{SectionLowDistortion}

In this section, we produce two uniformly discrete countable metric spaces so that if they Lipschitz embeds into $X^*$ with Lipschitz distortion strictly less than $3/2$ or $2$, respectively, then $X$ must contain an isomorphic copy of $\ell_1$. We use this in order to prove Theorem~\ref{Thmc0SepDualDist}.

We define the \emph{integer grid of $c_0$} as
\[\mathrm{Grid}(c_0)=\{(x_n)_n\in c_0\colon \forall n\in\N,\ x_n\in\mathbb Z\}.\]
So $\mathrm{Grid}(c_0)$ is a $(1,1)$-net of $c_0$ (meaning that it is $1$-separated and for every $x\in c_0$, $d(x,\mathrm{Grid}(c_0))\le 1$). We consider it as a metric space with the metric inherited from $c_0$.

\begin{proposition}\label{Prop3/2}
 Let $X$ be a Banach space and $f:\mathrm{Grid}(c_0)\cap 2\closedball{c_0} \to X^*$ be a Lipschitz embedding with distortion strictly smaller than $\frac32$.  Then $X$ contains an isomorphic copy of $\ell_1$.
\end{proposition}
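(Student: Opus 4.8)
The statement asserts that a low-distortion ($<3/2$) Lipschitz embedding of the bounded piece $\mathrm{Grid}(c_0)\cap 2B_{c_0}$ into a dual space $X^*$ forces $\ell_1\hookrightarrow X$. My strategy is the standard dichotomy: if $X$ does \emph{not} contain $\ell_1$, then by the Odell--Rosenthal theorem every bounded sequence in $X$ has a weak-Cauchy subsequence, and, more to the point, bounded sequences in $X^*$ have weak$^*$-convergent subsequences whose limits behave ``linearly'' enough that one can take weak$^*$-limits of certain differences $f(x)-f(y)$. The aim is then to exhibit inside $\mathrm{Grid}(c_0)\cap 2B_{c_0}$ a configuration of points on which the metric geometry is so rigid that any map with distortion $<3/2$ would, after passing to appropriate weak$^*$-limits, produce a contradiction with the triangle inequality (or with weak$^*$-lower semicontinuity of the norm).

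\textbf{The combinatorial core.} First I would isolate the right finite/countable gadget in $\mathrm{Grid}(c_0)\cap 2B_{c_0}$. The natural candidate is built from the summing-type vectors: for a finite set $S\subset\N$ consider $e_S=\sum_{n\in S}e_n$ (entries $0$ or $1$, so in $\mathrm{Grid}(c_0)\cap 2B_{c_0}$), together with $2e_n$ and $e_n-e_m$ type vectors. One checks that among such vectors one finds, for each $k$, points $u_1,\dots,u_k$ and ``midpoint-like'' or ``interlacing-like'' relations: e.g. $\|u_i\|_{c_0}=2$, $\|u_i-u_j\|_{c_0}=2$ for $i\ne j$, while certain averages or sums have $c_0$-norm only $1$. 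The quantitative friction between ``pairwise distance $2$'' and ``some combination has norm $1$'' is exactly what a factor $<3/2$ cannot absorb: a bi-Lipschitz map with constants $A\le\|f(x)-f(y)\|\le C\|x-y\|$, $AC<3/2$, sends a pair at distance $2$ to points at distance $\ge 2A$ in $X^*$, while sending the relevant ``short'' witnesses to distance $\le C$, and $2A>C\cdot\frac{2}{3}\cdot\frac{3}{2}$ fails to be compatible once one pushes to the limit. I would make this precise by choosing an infinite sequence $(v_n)$ in the grid that is weakly null in $c_0$ with $\|v_n\|=2$ and $\|v_n-v_m\|=2$, $\|v_n+v_m\|$ or $\|\tfrac12(v_n+v_m)\|$ small.

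\textbf{Taking limits in the dual.} Assuming $\ell_1\not\hookrightarrow X$: pass to a separable subspace $Y$ of $X$ with $\overline{\mathrm{span}}\,f(\mathrm{Grid}(c_0)\cap 2B_{c_0})$ isometrically inside $Y^*$ (Proposition~\ref{toutcon}), so $Y$ is separable and $\ell_1\not\hookrightarrow Y$, hence $B_{Y^*}$ is weak$^*$-sequentially compact and weak$^*$ = ``Mackey-enough'' on bounded sets. Along a subsequence, $f(v_n)\xrightarrow{w^*} \xi$. Now exploit: for the sequence $(v_n)$, elements like $v_n$ and a fixed $v_{n_0}$ are at distance $2$, but $v_n$ and $v_n+e_{n_0}-e_{n_0}'$-type perturbations create a Cauchy pattern. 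Using weak$^*$-lower semicontinuity, $\|f(v_n)-f(v_m)\|\ge 2A$ for all $n\ne m$, while any convex combination argument on the images, combined with the ``short witness'' whose image has norm $\le C$, forces $2A\le$ something $\le \tfrac32 A$ after dividing by $C$ — a contradiction with $AC<3/2$. The precise inequality chase is where the constant $3/2$ is pinned down, and it will mirror the approximate-midpoint technique: find $x,y$ with $\|x-y\|$ large but $\mathrm{Mid}(x,y)$ meeting the grid nontrivially, so that $\|f(x)-f(z)\|+\|f(z)-f(y)\|\ge \|f(x)-f(y)\|$ clashes with the upper estimate.

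\textbf{Main obstacle.} The hard part will be choosing the metric configuration in $\mathrm{Grid}(c_0)\cap 2B_{c_0}$ so that \emph{both} the lower estimate survives passing to a weak$^*$-limit \emph{and} the upper estimate on the ``short'' witnesses is genuinely $<3/2$ times the lower one — the radius-$2$ restriction is tight, so there is little slack, and one must verify the witnesses stay inside $2B_{c_0}$. A secondary subtlety is justifying that, when $\ell_1\not\hookrightarrow Y$, the relevant differences $f(v_n)-f(w_n)$ (with $(v_n-w_n)$ weakly null in $c_0$) converge weak$^*$ along a subsequence to something with controlled norm; this uses only weak$^*$-sequential compactness of bounded sets in $Y^*$, which is automatic, plus weak$^*$-l.s.c.\ of the norm, so the real work is purely in the $c_0$-side combinatorics and the arithmetic of the constant.
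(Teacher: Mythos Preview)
Your proposal does not locate a workable configuration, and the ones you sketch cannot produce the constant $3/2$. Among $0/1$-vectors $e_S$ all pairwise $c_0$-distances are $1$; with $\{-1,0,1\}$-entries you reach distance $2$, and a midpoint/triangle argument then gives only $2A\le 2C$, which is vacuous. The paper's proof hinges on a pair at distance \emph{three}: $2e_k$ and $-e_k$. One chooses $x_k\in S_X$ with $\langle x_k, f(2e_k)-f(-e_k)\rangle\ge 3-\varepsilon$ and shows that the sequence $(x_k)\subset X$ has no weakly Cauchy subsequence; Rosenthal's $\ell_1$ theorem then yields $\ell_1\hookrightarrow X$ directly, with no argument by contradiction. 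The separating functional for a given subsequence $\M=\{m_1<m_2<\dots\}$ is any weak$^*$-cluster point $x^*$ of $f(\mathbf 1_{A_1\cap[1,m]})-f(\mathbf 1_{A_2\cap[1,m]})$, where $A_1=\{m_{2j+1}:j\in\N\}$ and $A_2=\N\setminus A_1$. Since $\|\mathbf 1_{A_1\cap[1,m]}-2e_{m_{2j+1}}\|_\infty=1$ and $\|\mathbf 1_{A_2\cap[1,m]}-(-e_{m_{2j+1}})\|_\infty=1$, one gets $\langle x_{m_{2j+1}},x^*\rangle\ge 3-\varepsilon-2D$ and, symmetrically, $\langle x_{m_{2j}},x^*\rangle\le -(3-\varepsilon-2D)$. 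The requirement $3-2D>0$ is exactly $D<3/2$.

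Two further problems with your outline. First, you propose to argue in $X^*$ by taking weak$^*$-limits of $f(v_n)$ and using weak$^*$-lower semicontinuity of the norm; but weak$^*$-sequential compactness of bounded sets in $Y^*$ holds for \emph{every} separable $Y$, so it is unclear what the hypothesis $\ell_1\not\hookrightarrow X$ is meant to contribute at that point, and lower semicontinuity alone does not distinguish the two cases. The paper avoids this entirely by producing the non-weakly-Cauchy sequence in $X$, not in $X^*$. Second, your ``main obstacle'' paragraph already flags the real gap: you have not exhibited grid points inside $2B_{c_0}$ realising the required ratios. The answer is that one must leave the $0/1$ world and use the entries $2$ and $-1$ simultaneously; this is precisely why the statement concerns $\mathrm{Grid}(c_0)\cap 2B_{c_0}$ rather than $\mathrm{Grid}(c_0)\cap B_{c_0}$.
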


\begin{proof}
Replacing $f$ by $\lambda f$ for some appropriate $\lambda>0$, we may assume that there exists $D\in [1,\frac{3}{2})$ so that\[
\norm{x-y}\leq \norm{f(x)-f(y)}\leq D\norm{x-y}\]
for   all $x,y \in \mathrm{Grid}(c_0)\cap 2\closedball{c_0}$. Fix  $\varepsilon>0$ such that $3-\varepsilon-2D>0$.

Let $(e_n)_n$ be the canonical basis of $c_0$.
%and let us denote $\langle x,x^*\rangle$ the dual pairing $x^*(x)$ for all $x\in X$ and all $x^*\in X^*$.
For every $k\in \Natural$, pick $x_k \in \sphere{X}$  such that \[\duality{x_k,f(2e_k)-f(-e_k)}\geq 3-\varepsilon.\]
We claim that the sequence $(x_k)_k$ has no weakly Cauchy subsequence. Indeed, let $\M=\set{m_1<m_2<\ldots}\in [\N]^\omega$  and  set $A_1=\set{m_{2k+1}:k\in \Natural}$ and $A_2=\Natural \setminus A_1$.
Then, for all $k\in\N$ and all  $m>m_{2k+1}$ we have that
\begin{eqnarray*}\duality{x_{m_{2k+1}},f(\indicator{A_1\cap [1,m]})-f(\indicator{A_2\cap [1,m]})}&=&
\Big\langle x_{m_{2k+1}},f(2e_{m_{2k+1}})-f(-e_{m_{2k+1}})\\ & &+f(\indicator{A_1\cap [1,m]})-f(2e_{m_{2k+1}})\\
& &+ f(-e_{m_{2k+1}}))-f(\indicator{A_2\cap [1,m]})\Big\rangle\\
&\geq& 3-\varepsilon-2D
\end{eqnarray*}
and
\begin{eqnarray*}
\duality{x_{m_{2k}},f(\indicator{A_1\cap [1,m]})-f(\indicator{A_2\cap [1,m]})}
&=&\Big\langle x_{m_{2k}},f(-e_{m_{2k}})-f(2e_{m_{2k}})\\ & &+f(\indicator{A_1\cap [1,m]})-f(-e_{m_{2k}})\\
& &+ f(2e_{m_{2k}})-f(\indicator{A_2\cap [1,m]})\Big\rangle\\
&\leq & -3+\varepsilon+2D.
\end{eqnarray*}
Let $\mathcal U$ be a nonprincipal ultrafilter on $\N$ and set \[x^*=w^*\text{-}\lim_{m,\mathcal U}\big(f(\indicator{A_1\cap [1,m]})-f(\indicator{A_2\cap [1,m]})\big).\]
The above inequalities imply that for all $k\in\N$:
$$x^*(x_{m_{2k+1}})\geq 3-\eps-2D\ \ \text{and}\ \  x^*(x_{m_{2k}})\leq -3+\eps+2D.$$
This shows that $(x_{m_k})_k$ is not weakly Cauchy.

By Rosenthal's $\ell_1$ theorem \cite{Rosenthal1974PNAS}, this implies that $(x_{k})_k$ has a subsequence equivalent to the standard unit basis of $\ell_1$. In particular, $X$ contains an isomorphic copy of $\ell_1$ and we are done.
\end{proof}

\begin{theorem}\label{Thmc0SepDualDist}
	If $c_0$ coarse Lipschitz embeds into a dual space $X^*$ with coarse Lipschitz distortion strictly less than $\frac{3}{2}$, then $X$ contains an isomorphic copy of $\ell_1$.
\end{theorem}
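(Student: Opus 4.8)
The goal is to deduce Theorem~\ref{Thmc0SepDualDist} from Proposition~\ref{Prop3/2}, upgrading a coarse Lipschitz embedding of all of $c_0$ into a genuine bi-Lipschitz embedding of the bounded piece $\mathrm{Grid}(c_0)\cap 2\closedball{c_0}$ of the integer grid, while keeping the distortion below $\frac32$. The standard tool for this is a Ramsey-type ``gap'' argument on long, finitely supported sequences: if $g:c_0\to X^*$ is coarse Lipschitz with $\frac1A\|x-y\|-B\le\|g(x)-g(y)\|\le C\|x-y\|+D$ and $AC<\frac32$, then by rescaling — replacing $g$ by $x\mapsto g(\lambda x)$ for large $\lambda$ and dividing the target by $\lambda C$ — the additive constants $B,D$ become negligible compared to the (now large) distances we care about, at the cost of only changing the \emph{integer} grid to a coarser grid of mesh comparable to $\lambda$, which is isometric (after rescaling) to $\mathrm{Grid}(c_0)$ itself. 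More precisely, the first step is to fix a large integer $N$ and consider the map $f:\mathrm{Grid}(c_0)\cap 2\closedball{c_0}\to X^*$ defined by $f(x)=\frac{1}{NC}g(Nx)$; then for $x,y$ in this bounded grid with $x\ne y$ one has $\|x-y\|\ge 1$, hence $N\|x-y\|\ge N$, and
\[
\frac{\|x-y\|}{AC}-\frac{B}{NC}\le \|f(x)-f(y)\|\le \|x-y\|+\frac{D}{NC}.
\]
For $N$ large enough the additive errors $\frac{B}{NC}$ and $\frac{D}{NC}$ are as small as we like on the finitely many relevant pairs — but the subtlety is that there are infinitely many such pairs since the grid is infinite, so one cannot simply absorb the constants uniformly.

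This is exactly where I expect the main obstacle, and where a Ramsey/stabilization argument is needed rather than a naive estimate. The cleanest route is not to try to embed the whole bounded grid, but to observe that the proof of Proposition~\ref{Prop3/2} only uses finitely many points of $\mathrm{Grid}(c_0)\cap 2\closedball{c_0}$ at a time — namely, for each $k$ it uses the points $2e_k$, $-e_k$, and the indicator vectors $\indicator{A_1\cap[1,m]}$, $\indicator{A_2\cap[1,m]}$ — and moreover, the conclusion ultimately rests on Rosenthal's $\ell_1$ theorem applied to a sequence $(x_k)_k\subset S_X$ produced from the separation between $f(2e_k)$ and $f(-e_k)$ together with a uniform Lipschitz bound $D$ on the pieces $\|f(\indicator{A_1\cap[1,m]})-f(2e_{m_{2k+1}})\|$, which involves vectors at distance at most $3$ in $c_0$. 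So the plan is: rerun the argument of Proposition~\ref{Prop3/2} directly with $f(x)=\frac{1}{NC}g(Nx)$ in place of $f$, choosing $N$ so large that $3-\varepsilon-2D'>0$ where $D'=AC+\frac{D}{3NC}$ is the effective multiplicative-plus-additive Lipschitz constant valid for all pairs at $c_0$-distance between $1$ and $3$ (there the additive error, divided by the distance, is at most $\frac{D}{NC}$, which we fold into an epsilon). Since $AC<\frac32$, for $N$ large we get $D'<\frac32$, and then picking $\varepsilon>0$ with $3-\varepsilon-2D'>0$ makes the two chains of inequalities in the proof of Proposition~\ref{Prop3/2} go through verbatim.

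\begin{proof}
Let $g:c_0\to X^*$ be a coarse Lipschitz embedding with coarse Lipschitz distortion strictly less than $\tfrac32$; fix $A,B,C,D>0$ with $AC<\tfrac32$ and
\[
\frac1A\|x-y\|-B\le \|g(x)-g(y)\|\le C\|x-y\|+D\qquad(x,y\in c_0).
\]
Pick $\varepsilon>0$ with $3-\varepsilon-2AC>0$, and then choose $N\in\N$ so large that $\tfrac{3D}{NC}<\tfrac{\varepsilon}{4}$ and, moreover, $3-\tfrac{\varepsilon}{2}-2\big(AC+\tfrac{\varepsilon}{4}\big)>0$; this is possible since $AC<\tfrac32$. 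Define $f:c_0\to X^*$ by $f(x)=\tfrac{1}{NC}g(Nx)$. For $x,y\in c_0$ with $1\le\|x-y\|\le 3$ we have $N\le N\|x-y\|$, hence
\[
\|f(x)-f(y)\|\le \|x-y\|+\frac{D}{NC}\le \Big(1+\frac{D}{NC}\Big)\|x-y\|\le \Big(1+\frac{\varepsilon}{12}\Big)\|x-y\|,
\]
and similarly $\|f(x)-f(y)\|\ge \tfrac1{AC}\|x-y\|-\tfrac{B}{NC}$. Set $D':=1+\tfrac{\varepsilon}{12}$; then the additive constant no longer matters for the upper estimate on pairs of distance between $1$ and $3$, which are the only pairs used below, and $2D'<2\big(AC+\tfrac{\varepsilon}{4}\big)$, so $3-\tfrac{\varepsilon}{2}-2D'>0$.

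Now we repeat the argument of Proposition~\ref{Prop3/2} with $f$ as above and $\varepsilon$ replaced by $\varepsilon/2$. Let $(e_n)_n$ be the canonical basis of $c_0$. Since $\|2e_k-(-e_k)\|=3$, for each $k\in\N$ we may pick $x_k\in S_X$ with $\langle x_k,\,f(2e_k)-f(-e_k)\rangle\ge 3-\tfrac{\varepsilon}{2}$ (using that $f(2e_k)-f(-e_k)\in X^*$ has norm at least $\tfrac3{AC}-\tfrac{B}{NC}$, which is positive for $N$ large, after possibly enlarging $N$; in any case the separation is at least $3-\tfrac{\varepsilon}{2}$ for $N$ large since $\|f(2e_k)-f(-e_k)\|\ge\|2e_k-(-e_k)\|-\tfrac{D}{NC}$ would require a lower estimate with constant $1$ — and indeed after further rescaling one may assume $\|f(x)-f(y)\|\ge\|x-y\|$ on the relevant bounded set exactly as in Proposition~\ref{Prop3/2}, replacing $f$ by $\lambda f$). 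As in the proof of Proposition~\ref{Prop3/2}, given $\M=\{m_1<m_2<\cdots\}\in[\N]^\omega$, put $A_1=\{m_{2k+1}:k\in\N\}$, $A_2=\N\setminus A_1$. The vectors $\indicator{A_1\cap[1,m]}$, $\indicator{A_2\cap[1,m]}$, $2e_{m_j}$, $-e_{m_j}$ all lie in $2\closedball{c_0}$ and are pairwise at distance at most $3$, so the bound $\|f(u)-f(v)\|\le D'\|u-v\|\le D'$ for $\|u-v\|\le 1$ applies to all the cross-terms appearing in the two displayed chains of inequalities in Proposition~\ref{Prop3/2}. Reproducing those computations verbatim with $3-\varepsilon-2D$ replaced by $3-\tfrac{\varepsilon}{2}-2D'>0$ yields that for $x^*=w^*\text{-}\lim_{m,\mathcal U}\big(f(\indicator{A_1\cap[1,m]})-f(\indicator{A_2\cap[1,m]})\big)$ one has $x^*(x_{m_{2k+1}})\ge 3-\tfrac{\varepsilon}{2}-2D'>0$ and $x^*(x_{m_{2k}})\le -(3-\tfrac{\varepsilon}{2}-2D')<0$ for all $k$. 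Hence $(x_{m_k})_k$ is not weakly Cauchy. Since $\M$ was arbitrary, $(x_k)_k$ has no weakly Cauchy subsequence, so by Rosenthal's $\ell_1$ theorem \cite{Rosenthal1974PNAS} it has a subsequence equivalent to the unit vector basis of $\ell_1$. Therefore $X$ contains an isomorphic copy of $\ell_1$.
\end{proof}
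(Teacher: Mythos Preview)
Your approach is essentially the same as the paper's --- rescale the coarse Lipschitz map so that the additive constants become negligible on the integer grid, then invoke Proposition~\ref{Prop3/2} --- but you have talked yourself into a nonexistent difficulty and, as a consequence, the write-up is both overcomplicated and technically muddled.

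The ``subtlety'' you flag in the plan is illusory. On $\mathrm{Grid}(c_0)\cap 2\closedball{c_0}$, any two \emph{distinct} points satisfy $\|x-y\|\ge 1$ (integer coordinates!). Hence, with $f_n(x)=g(nx)/n$ one gets, for \emph{all} such pairs simultaneously,
\[
\Big(\frac1A-\frac{B}{n}\Big)\|x-y\|\;\le\;\|f_n(x)-f_n(y)\|\;\le\;\Big(C+\frac{D}{n}\Big)\|x-y\|,
\]
so the distortion is at most $\dfrac{C+D/n}{1/A-B/n}\to AC<\tfrac32$. That is precisely the paper's three-line argument: for $n$ large enough $f_n$ is a genuine bi-Lipschitz embedding of the bounded grid with distortion $<\tfrac32$, and Proposition~\ref{Prop3/2} finishes. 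No Ramsey/stabilisation, no ``finitely many points at a time''.

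Your actual proof also contains a normalisation slip. You divide by $NC$, which makes the \emph{upper} Lipschitz constant essentially $1$; then the lower bound on $\|f(2e_k)-f(-e_k)\|$ is only $\approx 3/(AC)$, not $3-\varepsilon/2$, and you cannot choose $x_k$ with $\langle x_k,f(2e_k)-f(-e_k)\rangle\ge 3-\varepsilon/2$. You notice this and patch it with ``replacing $f$ by $\lambda f$'', but that rescaling changes $D'$ back to roughly $AC$, not $1+\varepsilon/12$, so the earlier choice of $D'$ and the inequality $2D'<2(AC+\varepsilon/4)$ are beside the point. The clean fix is to normalise the \emph{lower} constant to $1$ from the start (i.e.\ take $f_n$ and then multiply by $A/(1-AB/n)$), obtaining upper constant $<\tfrac32$ for $n$ large --- at which point you may simply quote Proposition~\ref{Prop3/2} rather than rerun its proof.
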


\begin{proof}
Assume $f$ is such a coarse Lipschitz embedding from $c_0$ into $X^*$. Replacing $f$ with $x\mapsto f(nx)/n$ for a large enough $n\in\N$, the map $f$ restricted to ${\mathrm{Grid}(c_0)\cap 2B_{c_0}}$ becomes a Lipschitz embedding with distortion strictly smaller than $3/2$. Then, it follows from  Proposition~\ref{Prop3/2} that $X$ contains an isomorphic copy of $\ell_1$.
\end{proof}

We will now show that replacing $\mathrm{Grid}(c_0)\cap 2\closedball{c_0}$ by an appropriate graph $M$, the distortion in Proposition~\ref{Prop3/2}  can  be pushed  up to 2. It is not clear if the metric space $M$ needed for this is isometric to a subset of $c_0$.

We define the graph $M$ as follows. Let  $S=[\Natural]^{<\omega}$,  $G=\N$ and $H=\N$. Moreover, we write $G=\{g_i\colon i\in\N\}$ and $H=\{h_i:i\in\N\}$, where $g_i=h_i=i$ for all $i\in\N$.  We define $M$ as the disjoint union
\[M=\set{0}\sqcup S \sqcup G \sqcup H\]
and we define a graph structure on $M$ by putting an edge
\begin{itemize}
\item between $0$ and any element of $S$,
\item between $A \in S$ and $g_k \in G$ iff $k \in A$, and
\item between $A \in S$ and $h_k \in H$ iff $k \notin A$.
\end{itemize}
Then endow  $M$  with the  shortest path distance, which we denote by $d_M$. It should be clear that $d_M(A,B)=2$ if $A\neq B \in S$ and that $d_M(g_k,h_k)=4$ for all $k\in\N$.

\begin{proposition}\label{PropDist2}
Let $f:(M,d_M) \to X^*$ be a Lipschitz embedding with distortion strictly less than 2.
Then $X$ contains $\ell_1$.
\end{proposition}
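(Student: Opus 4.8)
The plan is to run, almost verbatim, the argument of Proposition~\ref{Prop3/2}, with the graph metric $d_M$ of $M$ taking the place of the $c_0$-norm. First I would normalise: after replacing $f$ by a suitable positive multiple we may assume there is $D\in[1,2)$ with
\[
d_M(x,y)\le\|f(x)-f(y)\|\le D\,d_M(x,y)\qquad\text{for all }x,y\in M,
\]
and then fix $\varepsilon>0$ with $4-\varepsilon-2D>0$; this is the only place where the hypothesis $D<2$ enters. Since $d_M(g_k,h_k)=4$ for all $k$, we get $\|f(g_k)-f(h_k)\|\ge 4$, so for each $k$ we may choose $x_k\in\sphere{X}$ with $\langle x_k,f(g_k)-f(h_k)\rangle\ge 4-\varepsilon$. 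The goal is then to show that $(x_k)_k$ has no weakly Cauchy subsequence, whence, by Rosenthal's $\ell_1$ theorem \cite{Rosenthal1974PNAS}, a subsequence of $(x_k)_k$ is equivalent to the unit vector basis of $\ell_1$ and so $X$ contains $\ell_1$.

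To produce the oscillating functional, I would fix an infinite set $\M=\{m_1<m_2<\cdots\}\subseteq\N$ and split it into the two disjoint infinite sets $A_1=\{m_j:j\text{ odd}\}$ and $A_2=\{m_j:j\text{ even}\}$; for $m\in\N$ write $A_i^m=A_i\cap[1,m]\in S$. The structural observation driving the estimate is that if $k\in A_1$ and $k\le m$, then $k\in A_1^m$ and $k\notin A_2^m$, so by the definition of the edges of $M$ the vertex $A_1^m$ is adjacent to $g_k$ and the vertex $A_2^m$ is adjacent to $h_k$, i.e.\ $d_M(A_1^m,g_k)=d_M(A_2^m,h_k)=1$; symmetrically, if $k\in A_2$ and $k\le m$, then $d_M(A_2^m,g_k)=d_M(A_1^m,h_k)=1$. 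Hence, for $k=m_j$ with $j$ odd and $m\ge k$, the telescoping identity
\[
f(A_1^m)-f(A_2^m)=\bigl(f(g_k)-f(h_k)\bigr)+\bigl(f(A_1^m)-f(g_k)\bigr)+\bigl(f(h_k)-f(A_2^m)\bigr)
\]
has its last two summands of norm $\le D$, so pairing with $x_k$ gives $\langle x_k,f(A_1^m)-f(A_2^m)\rangle\ge 4-\varepsilon-2D>0$; for $j$ even, the analogous identity with $g_k$ and $h_k$ interchanged gives $\langle x_k,f(A_1^m)-f(A_2^m)\rangle\le-(4-\varepsilon-2D)<0$.

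Finally, since all the vectors $f(A_1^m)-f(A_2^m)$ lie in $2D\,B_{X^*}$ (any two elements of $S$ are at $d_M$-distance at most $2$), I would fix a nonprincipal ultrafilter $\mathcal U$ on $\N$ and set $x^*=\wslim_{m,\mathcal U}\bigl(f(A_1^m)-f(A_2^m)\bigr)\in X^*$. Because $\mathcal U$ is nonprincipal, the two bounds above pass to the limit, giving $\langle x^*,x_{m_j}\rangle\ge 4-\varepsilon-2D$ for odd $j$ and $\langle x^*,x_{m_j}\rangle\le-(4-\varepsilon-2D)$ for even $j$; hence $(x_{m_j})_j$ is not weakly Cauchy. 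Since $\M$ was arbitrary, $(x_k)_k$ has no weakly Cauchy subsequence, and the result follows. I do not expect a genuinely hard step here: this is the Rosenthal-dichotomy scheme already carried out for Proposition~\ref{Prop3/2}, and the only delicate points are (i) verifying the distance identities $d_M(A_i^m,g_k)=d_M(A_i^m,h_k)=1$, which rely on the disjointness of $A_1$ and $A_2$ together with the edge definition of $M$, and (ii) the constant bookkeeping that makes the bound $D<2$ enter precisely through $4-\varepsilon-2D>0$.
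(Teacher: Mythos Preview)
Your proposal is correct and follows essentially the same route as the paper's proof: normalise, pick norming vectors $x_k$ for $f(g_k)-f(h_k)$, and use the edge structure of $M$ to build, via a weak$^*$ ultrafilter limit of differences $f(A_1^m)-f(A_2^m)$, a functional on which the $x_{m_j}$ oscillate, so Rosenthal's theorem applies. The only cosmetic difference is that the paper takes the complementary set $B=\N\setminus A$ rather than your $A_2=\{m_j:j\text{ even}\}$, which is immaterial since all that is used is $m_j\in A_1$, $m_j\notin A_2$ for $j$ odd (and vice versa for $j$ even).
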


This proposition should be compared with \cite{ProchazkaSanchez}*{Theorem 3.1} where a metric space $(N,d)$ is constructed such that if $N$ Lipschitz embeds into $X$ with distortion strictly less than 2 then $X$ contains $\ell_1$.

\begin{proof}
Replacing $f$ by $\lambda f$ for some appropriate $\lambda>0$, we may assume that there exists $D\in [1,2)$ so that\[
\norm{x-y}\leq \norm{f(x)-f(y)}\leq D\norm{x-y}\]
for   all $x,y \in M$. Fix  $\varepsilon>0$ such that $4-\varepsilon-2D>0$.

For every $k\in \Natural$, let $x_k \in \sphere{X}$ be such that $\duality{x_k,f(g_k)-f(h_k)}\geq 4-\varepsilon$. We claim that $(x_k)_k$ does not contain any weakly Cauchy subsequence. Rosenthal's $\ell_1$ theorem thus implies that $X$ contains $\ell_1$. In order to prove our claim, let $\M=\set{m_1<m_2<\ldots}$ be an infinite subset of $\Natural$.
We set $A=\set{m_{2k+1}:k\in \Natural}$ and $B=\Natural \setminus A$.
Let $\cU$ be a nonprincipal ultrafilter on $\N$ and let $\xi_A=w^*\text{-}\lim_{m,\cU} f(A\cap[1,m])$
and $\xi_B=w^*\text{-}\lim_{m,\cU} f(B\cap[1,m])$.
We have
\begin{eqnarray*}
\duality{x_{m_{2k+1}},\xi_A-\xi_B}&=& \Big\langle x_{m_{2k+1}},f(g_{m_{2k+1}})-f(h_{m_{2k+1}})+\xi_A-f(g_{m_{2k+1}})\\& &+f(h_{m_{2k+1}})-\xi_B\Big\rangle\\ &\geq &4-\varepsilon -2D
\end{eqnarray*}
and
\begin{eqnarray*}
\duality{x_{m_{2k}},\xi_A-\xi_B}&=&\Big\langle x_{m_{2k}},f(h_{m_{2k}})-f(g_{m_{2k}})+\xi_A-f(h_{m_{2k}})\\
& &+f(g_{m_{2k}})-\xi_B\Big\rangle\\
&\leq &-4+\varepsilon +2D
\end{eqnarray*}
for all $k\in\N$. So $(x_{m_k})_k$ is not weakly Cauchy, and we are done.
\end{proof}

\begin{remark}
    The results of this section have a certain importance also for the theory of Lipschitz free spaces.
    Indeed, it is a well known open problem of Kalton (see \cite{Kalton2012} the remarks after Problem 1) to determine whether $\Free(M)$ enjoys the metric approximation property (MAP) for every bounded and uniformly discrete metric space $M$.
    One naive approach would be to show that for such $M$, the free space $\Free(M)$ is isometrically a dual. Then a stroke of Grothendieck theorem would imply that $\Free(M)$ has the (MAP) since, being isomorphic to $\ell_1$, it has the (AP).
    This approach does not work because not all such $\Free(M)$ are isometrically duals (other than the examples above see Example~5.8 in \cite{GarciaLirolaPetitjeanProchazka2018Medi}) and so Abraham Rueda Zoca proposed a refined strategy which consists in proving that for every bounded uniformly discrete metric space $(M,d)$ and for every $0<\alpha<1$ the free space $\Free(M,d^\alpha)$ of the $\alpha$-snowlaked $M$ is isometrically a dual.
    Then again by Grothendieck the space $\Free(M,d^\alpha)$ would enjoy the (MAP) and by approximation as  $\alpha$ tends to $1$, so would $\Free(M)$.
    Strike number two: this approach does not work either since for the metric space from Propostion~\ref{PropDist2} there is $0<\alpha_0<1$ such that for all $\alpha_0<\alpha<1$ the space $\Free(M,d_M^\alpha)$ is not isometrically a dual. Indeed, it is enough to take $\alpha_0$ such that the Banach-Mazur distance of $\Free(M)$ and $\Free(M,d_M^\alpha)$ is strictly less than $2$. Now, since $(M,d)$ embeds isometrically into $\Free(M)$, it will Lipschitz embed into $\Free(M,d_M^\alpha)$ with distortion $<2$ whenever $\alpha_0<\alpha<1$.
    Proposition~\ref{PropDist2} then implies that  $\Free(M,d_M^\alpha)$ cannot be a dual as it is separable.
    Nevertheless, one can check that the free spaces $\Free(M,d_M^\alpha)$ enjoy the (MAP).
\end{remark}

In the next section we will be dealing with a special kind of embeddings of $L_1$ into separable duals and so it is natural to ask whether an analogue of Proposition~\ref{Prop3/2} (resp.~\ref{PropDist2}) is true for some bounded uniformly discrete subsets of $L_1$.
The answer is negative.
Indeed, since $L_1$ is stable we can apply~\cite{BaudierLancien2015} to see that every such set embeds isometrically into some reflexive space.

\section{Weak sequentially continuous embeddings}\label{SectionWeakSeq}

In this section, we show that Problem \ref{ProbEmbc0SepDual} has a negative answer with the further assumption that the embedding is weak-to-weak$^*$ sequentially continuous.

Let $X$ be a separable Banach space, $K\subset X^*$ and $x^*\in K$. We say that \emph{$x^*$ is a point of weak$^*$-to-norm continuity of $K$} if  every sequence $(x^*_n)_n$ in $K$ which converges to $x^*$ in the weak$^*$ topology converges to $x^*$ in the norm topology.

The following is a well known application of the Baire category Theorem (see \cite{AlbiacKalton2006}*{Lemma 6.3.4}).

\begin{lemma}\label{LemmaWeakStarToNormContInCompDual}
Let $X$ be a Banach space with separable dual and $K$ be  a weak$^*$ compact subset of  $X^*$. Then $K$ has a point of weak$^* $-to-norm
continuity.
\end{lemma}

The next result should be compared with \cite{AlbiacKalton2006}*{Lemma 6.3.5}, which is a classical result in the isomorphic theory of Banach spaces.

\begin{lemma}\label{l:AimingAtPointOfWs-to-normContinuity}
Suppose $X$ and $Y$ are Banach spaces and assume that $Y^*$ is separable.
Let $f:B_X\to Y^*$ be a norm-to-weak$^*$ continuous bounded map so that its inverse exists and it is uniformly continuous.
Then every closed subset $F$ of $B_X$ contains a point $x$ such that $f(x)$ is a point of weak$^*$-to-norm continuity of $\overline{f(F)}^{w^*}$.
\end{lemma}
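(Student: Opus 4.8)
The plan is to imitate the classical argument behind \cite{AlbiacKalton2006}*{Lemma 6.3.5}, combining Lemma~\ref{LemmaWeakStarToNormContInCompDual} with a Baire category argument inside $F$ itself. First I would observe that $K:=\overline{f(F)}^{w^*}$ is a weak$^*$ compact subset of $Y^*$ (here we use that $f$ is bounded, so $f(F)$ is norm bounded and its weak$^*$ closure is weak$^*$ compact by Banach--Alaoglu; separability of $Y^*$ makes $K$ weak$^*$ metrizable). By Lemma~\ref{LemmaWeakStarToNormContInCompDual}, $K$ has \emph{some} point of weak$^*$-to-norm continuity, but the issue is to find one that lies in $f(F)$ and moreover is the image of a point of $F$ — this is exactly where the hypotheses on $f$ (norm-to-weak$^*$ continuity, plus uniform continuity of $f^{-1}$) must be used.

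The key step will be to transport the oscillation function from $K$ back to $F$. For $\eta>0$ and $x\in F$, define the oscillation
\[
\mathrm{osc}_\eta(x)=\sup\{\|f(x')-f(x'')\| : x',x''\in F,\ x',x''\xrightarrow{w^*\text{-close to}} f(x)\},
\]
more precisely, for each weak$^*$-open neighbourhood $V$ of $f(x)$ set $\mathrm{diam}_{\|\cdot\|}(f(F)\cap V)$ and let $\mathrm{osc}(x)$ be the infimum over such $V$. One shows that $f(x)$ is a point of weak$^*$-to-norm continuity of $K$ if and only if $\mathrm{osc}(x)=0$. The sets $U_n=\{x\in F:\mathrm{osc}(x)<1/n\}$ are relatively open in $F$ (this uses the norm-to-weak$^*$ continuity of $f$ to pull weak$^*$-neighbourhoods of $f(x)$ back to neighbourhoods of $x$, together with the fact that a point close to $x$ has its image weak$^*$-close to $f(x)$). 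Then I would argue each $U_n$ is dense in $F$: given a nonempty relatively open $W\subset F$, its closure $\overline W$ is a complete metric space, $\overline{f(\overline W)}^{w^*}$ is weak$^*$ compact, hence by Lemma~\ref{LemmaWeakStarToNormContInCompDual} has a point $y^*$ of weak$^*$-to-norm continuity; using that $f^{-1}$ exists and is uniformly continuous one pulls $y^*$ back to a point $x\in \overline W$ with small oscillation, and a further application of openness of $U_n$ places such a point inside $W$ itself. By the Baire category theorem applied to the complete metric space $F$, the intersection $\bigcap_n U_n$ is nonempty; any $x$ in it satisfies $\mathrm{osc}(x)=0$, i.e.\ $f(x)$ is a point of weak$^*$-to-norm continuity of $\overline{f(F)}^{w^*}$, which is the desired conclusion.

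The main obstacle I anticipate is the density step, specifically making rigorous the passage "from a point of weak$^*$-to-norm continuity $y^*$ of $\overline{f(\overline W)}^{w^*}$ to a point $x\in\overline W$ of small oscillation." The subtlety is that $y^*$ need not lie in $f(\overline W)$ — it could be a genuine weak$^*$-limit point — so one cannot simply set $x=f^{-1}(y^*)$. Instead one picks a sequence $x_j\in \overline W$ with $f(x_j)\xrightarrow{w^*}y^*$; weak$^*$-to-norm continuity at $y^*$ forces $f(x_j)$ to be norm-Cauchy, hence (by uniform continuity of $f^{-1}$) $x_j$ is Cauchy in $X$ and converges to some $x\in\overline W$; norm-to-weak$^*$ continuity of $f$ gives $f(x)=y^*$, so in fact $y^*\in f(\overline W)$ after all, and then one checks that $x$ has small oscillation because any $F$-points with images weak$^*$-near $f(x)=y^*$ have images norm-near $y^*$. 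This Cauchy-chasing is routine once set up correctly, but it is the heart of the matter; the rest is bookkeeping with the Baire category theorem on the complete metric space $F$.
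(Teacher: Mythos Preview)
Your ``main obstacle'' paragraph already contains the entire proof---applied once to $F$ itself rather than to a subset $\overline{W}$. The paper does exactly this: apply Lemma~\ref{LemmaWeakStarToNormContInCompDual} directly to $K=\overline{f(F)}^{w^*}$ to get a point $y^*$ of weak$^*$-to-norm continuity, pick $x_n\in F$ with $f(x_n)\xrightarrow{w^*} y^*$, observe $(f(x_n))$ is norm-Cauchy by the choice of $y^*$, deduce $(x_n)$ is Cauchy by uniform continuity of $f^{-1}$, let $x=\lim x_n\in F$ (since $F$ is closed), and conclude $f(x)=y^*$ by norm-to-weak$^*$ continuity of $f$. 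That is the whole argument; the Baire category wrapper, the oscillation function, and the density step are all unnecessary.

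Moreover, your density step as written has a genuine gap. You obtain $y^*=f(x)$ as a point of weak$^*$-to-norm continuity of $\overline{f(\overline{W})}^{w^*}$, but the oscillation $\mathrm{osc}(x)$ you need to control is defined relative to $f(F)$, not $f(\overline{W})$. A point $z\in F\setminus\overline{W}$ could have $f(z)$ weak$^*$-close to $y^*$ without being norm-close, so you have not shown $x\in U_n$. This is fixable (e.g.\ by running a nested-ball construction instead), but there is no need: once you notice that your Cauchy-chasing argument applied to $F$ already lands $y^*$ inside $f(F)$, the lemma is proved in one stroke.
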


\begin{proof}
Let $F$ be a closed subset of $B_X$ and let $W$ be the weak$^*$ closure of $f(F)$.
Since $f(F)$ is bounded, $W$ is weak$^*$ compact.
By Lemma~\ref{LemmaWeakStarToNormContInCompDual}, there exists $y^*\in W$ which is a point of weak$^*$-to-norm continuity of $W$.
Let $(y^*_n)_n$ be a sequence in $f(F)$ converging to $y^*$ in the weak$^*$ topology.
By the choice of $y^*$, the sequence $(y^*_n)_n$ converges to $y^*$ in norm.
For each $n\in\N$, pick $x_n\in F$ so that $f(x_n)=y^*_n$.
Since $(y^*_n)_n $ is a Cauchy sequence in $Y^*$ and $f^{-1}$ is uniformly continuous, it follows that $(x_n)_n$ is a Cauchy sequence in $X$ and converges in norm to some  $x$.
As $F$ is closed, $x\in F$.
Since $f$ is norm-to-weak$^*$ continuous, we have \[f(x)=\wslim_nf(x_n)=\wslim_ny^*_n=y^*.\]
\end{proof}

\begin{lemma}\label{LemmaCoarseSeqWeakUnfContInvMap}
Let $X$ be either  $L_1$ or $c_0$, and $Y$ be a Banach space with separable dual. There is no weak-to-weak$^*$ sequentially continuous bounded map   $B_X\to Y$   with a uniformly continuous inverse.
\end{lemma}

\begin{proof}
Suppose $f:B_X\to Y^*$  is a weak-to-weak$^*$ sequentially continuous    bounded map with uniformly continuous inverse.
In particular $f$ is norm-to-weak$^*$ continuous, so using Lemma~\ref{l:AimingAtPointOfWs-to-normContinuity}, it is enough to find a closed $F\subset B_X$ such that for every $x \in F$ there is a sequence $(x_n)\subset F$ such that $x_n \to x$ weakly but not in norm.
Indeed, Lemma~\ref{l:AimingAtPointOfWs-to-normContinuity} furnishes a point $x\in F$ such that $f(x)$ is a point of weak$^*$-to-norm continuity of $f(F)$. We then have $f(x_n) \to f(x)$ weak$^*$ and therefore in norm.
Now the continuity of $f^{-1}$ yields $x_n \to x$ in norm which is impossible.

We define $F=\{x\in X\colon \|x\|\in [\frac12,1]\}$.
Let $x\in F$.
\begin{itemize}
 \item In the case $X=c_0$ we set $x_n=\frac{x+e_n}{\norm{x+e_n}}$.
It is clear that $x_n\in F$, $x_n\to x$ weakly and $\liminf\norm{x_n-x}\geq 1$.
 \item In the case $X=L_1$, let $(r_n)_n$ be the sequence of Rademarcher functions. Then $(r_nx)_n$ is weakly null. Moreover, it is easy to see that $\norm{x+r_nx}+\norm{x-r_nx}=2\norm{x}$ for all $n \in \N$.
 Using that $\liminf\norm{x\pm r_nx}\geq \norm{x}$, it follows that $\lim \norm{x+r_nx}=\norm{x}$.
 We can thus set $x_n=\frac{\norm{x}(x+r_nx)}{\norm{x+r_nx}}$ and we get again that $x_n\in F$, $x_n\to x$ weakly and $\liminf\norm{x_n-x}\geq \norm{x}\geq\frac12$.
\end{itemize}
\end{proof}

\begin{theorem}\label{ThmCoarUniEmbWSCinDualSp}
Neither $c_0$ nor $L_1$ can be coarsely (resp. uniformly) embedded into a separable dual Banach space by a map that is weak-to-weak$^*$ sequentially continuous.
\end{theorem}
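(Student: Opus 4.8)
The plan is to reduce both the coarse and the uniform case to the rigidity statement already established in Lemma~\ref{LemmaCoarseSeqWeakUnfContInvMap}, namely that there is no weak-to-weak$^*$ sequentially continuous bounded map $B_X \to Y^*$ with uniformly continuous inverse, when $X \in \{c_0, L_1\}$ and $Y^*$ is separable. So suppose, for contradiction, that $X$ is $c_0$ or $L_1$ and that $f \colon X \to Y^*$ is a coarse (resp.\ uniform) embedding into a separable dual space which is moreover weak-to-weak$^*$ sequentially continuous.

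First I would carry out the standard compression of a coarse (or uniform) embedding into a bounded one. Restrict $f$ to the unit ball $B_X$. If $f$ is uniform, then $f\restricted_{B_X}$ is already uniformly continuous and bounded on a bounded set, and since $f$ is a uniform embedding the inverse of $f\restricted_{B_X}$ onto its image is uniformly continuous as well. If $f$ is merely coarse, the naive restriction need not be uniformly continuous; here I would instead use the by-now-classical device of averaging $f$ over a sufficiently fine net, or equivalently pass to the map $g(x) = f(\lfloor \text{(net rounding of) } x\rfloor)$ composed with a Lipschitz extension, to replace $f$ by a map that is coarse Lipschitz; then rescaling $x \mapsto f(nx)/n$ as in the proof of Theorem~\ref{Thmc0SepDualDist} turns the restriction to $B_X$ into a genuinely Lipschitz (hence uniformly continuous, bounded) map with a Lipschitz-below, hence uniformly continuous, inverse on its image. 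Throughout this surgery I must check that the weak-to-weak$^*$ sequential continuity is preserved: composition with the linear rescalings $x \mapsto nx$ and $y^* \mapsto y^*/n$ is weak-to-weak$^*$ continuous, and the net-rounding step can be handled because a weakly convergent sequence in $B_X$, after rounding in a fixed fine net, is eventually constant along a subsequence, so sequential continuity survives. Finally, a bounded weak-to-weak$^*$ sequentially continuous map is in particular norm-to-weak$^*$ continuous on $B_X$, which is the hypothesis needed to invoke Lemma~\ref{l:AimingAtPointOfWs-to-normContinuity}.

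With these reductions in hand, the resulting map $B_X \to Y^*$ is bounded, weak-to-weak$^*$ sequentially continuous, and has a uniformly continuous inverse, while $Y^*$ is separable — this directly contradicts Lemma~\ref{LemmaCoarseSeqWeakUnfContInvMap}, completing the proof. The main obstacle is the coarse case: unlike the uniform case, where the restriction to $B_X$ works verbatim, in the coarse case one has to manufacture a uniformly continuous (ideally Lipschitz) restriction without destroying weak-to-weak$^*$ sequential continuity, and it is the interaction of the net-rounding/averaging trick with the weak topology that requires care. The uniform case, by contrast, is essentially immediate once one observes that $f\restricted_{B_X}$ and its inverse are uniformly continuous and that a uniformly continuous map on a bounded set is bounded.
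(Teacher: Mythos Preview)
Your treatment of the uniform case is correct and essentially matches the paper: restricting a uniform embedding to $B_X$ already gives a bounded map with uniformly continuous inverse, and weak-to-weak$^*$ sequential continuity is inherited, so Lemma~\ref{LemmaCoarseSeqWeakUnfContInvMap} applies directly.

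In the coarse case, however, your argument has a genuine gap. First, you misidentify the obstacle: Lemma~\ref{LemmaCoarseSeqWeakUnfContInvMap} does \emph{not} require the map itself to be uniformly continuous, only that its \emph{inverse} exist and be uniformly continuous. A single rescaling $x\mapsto f(nx)/n$ makes the forward map asymptotically Lipschitz, but the compression becomes $t\mapsto \rho_f(nt)/n$; for a general coarse embedding $\rho_f$ may vanish on an initial interval, so the rescaled map need not be injective on $B_X$, let alone have uniformly continuous inverse. This trick works for coarse \emph{Lipschitz} embeddings (as in Theorem~\ref{Thmc0SepDualDist}), not for arbitrary coarse ones. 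Second, your net-rounding claim is false: if $(e_n)$ is the unit basis of $c_0$ then $e_n\to 0$ weakly, yet the $e_n$ are mutually at distance $1$, so rounding to any $\eps$-net with $\eps<1/2$ sends them to pairwise distinct points, and no subsequence becomes constant. Thus weak-to-weak$^*$ sequential continuity does not survive that step.

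The paper's fix is different and avoids both issues. One passes to the $\ell_2$-sum target $\ell_2(Y^*)=\big(\ell_2(Y)\big)^*$, which is still a separable dual, and defines $g(x)=(f(a_nx)/b_n)_n$ for suitable sequences $(a_n),(b_n)$ (this is \cite{Braga2017JFA}*{Lemma~5.1}). Combining \emph{infinitely many} scales is exactly what forces $g^{-1}$ to be uniformly continuous even when no single rescaling does. Weak-to-weak$^*$ sequential continuity passes to $g$ because it holds coordinatewise and $g$ is bounded on bounded sets. With $g\restricted_{B_X}$ in hand, Lemma~\ref{LemmaCoarseSeqWeakUnfContInvMap} applies verbatim.
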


Of course, it is well known that $L_1$ embeds uniformly and coarsely into $\ell_2$ (see~\cite{BenyaminiLindenstrauss} or~\cite{NaorICM}*{Corollary 3.1} for a simple explicit formula).

\begin{proof}
Let $X$ be either $L_1$ or $c_0$, $Y$ be a Banach space with separable dual and assume that there exists a weak-to-weak$^*$ sequentially continuous  map $f:X\to Y^*$   which is either a coarse  or a uniform embedding.

\begin{claim}
There exists a   coarse map $g:X\to \ell_2(Y^*)$ which is  weak-to-weak$^*$ sequentially continuous and so that $g^{-1}$ exists and   is uniformly continuous.
\end{claim}

\begin{proof}
If $f:X\to Y^*$  is a uniform embedding, there is nothing to be done. Indeed, we may simply take $g=i\circ f$, where $i:Y^*\to \ell_2(Y^*)$ is a linearly isometric inclusion.

Suppose  $f$ is a coarse embedding. Without loss of generality, assume that $f(0)=0$.  By \cite{Braga2017JFA}*{Lemma 5.1}, there exist  sequences of positive reals $(a_n)_n$ and $(b_n)_n$ so that the map $g:X\to \ell_2(Y^*)$ given by $g(x)=(f(a_nx)/b_n)_n$, for all $x\in X$,  is a well defined coarse embedding with uniformly continuous inverse.\footnote{Notice that  the hypothesis of  \cite{Braga2017JFA}*{Lemma 5.1} also demand the  map to be norm continuous. However, this is only used in order to guarantee that $g$ is norm continuous.} Since $f$ is weak-to-weak$^*$ sequentially continuous, so is $g$.
\end{proof}

Since $\ell_2(Y^*)$ is the dual of $\ell_2(Y)$ and separable, the result follows from Lemma~\ref{LemmaCoarseSeqWeakUnfContInvMap}.
\end{proof}

Notice that Theorem~\ref{ThmCoarUniEmbWSCinDualSp} implies that, for $p\in (1,\infty)$, $L_1$ does not coarsely (resp. uniformly) embed into either $\ell_p$ or $L_p$ by a weakly sequentially continuous map. In contrast with that, $\ell_q$ strongly embeds into $\ell_p$ by a weakly sequentially continuous map for all $q\leq p$ (see \cite{Braga2018IMRN}*{Theorem 1.8}).

Notice also that one could prove by a similar but simpler proof that if $X$ is a Banach space failing the point of continuity property (PCP) then $X$ cannot be coarsely (resp. uniformly) embedded into a separable dual Banach space $Y^*$ by a map that is weak-to-weak$^*$ continuous on bounded sets.
The added difficulty here is that, since we only have the sequential continuity, one has to present the weakly convergent sequences which furnish the contradiction.

\begin{acknowledgments}
Part of this paper was done while the first  named author was visiting the Universit\'{e}  Bourgogne Franche-Comt\'{e}, Besan\c{c}on, France  in May and July of 2019. This author would like to thank the hospitality of their mathematics department. The first named author  was partially supported by the Simons Foundation through York Science Fellowship. The three last named authors were supported by the French ``Investissements d'Avenir'' program, project ISITE-BFC (contract ANR-15-IDEX-03).
\end{acknowledgments}

%------------------- Biblio -------------------

\bibliographystyle{alpha}
\bibliography{bibliography2}
	
\end{document}